\newtheorem{theorem}{Theorem}[section]
\theoremstyle{plain}
\newtheorem*{acknowledgement}{Acknowledgement}
\newtheorem{conjecture}{Conjecture}
\newtheorem{corollary}[theorem]{Corollary}
\newtheorem{definition}[theorem]{Definition}
\newtheorem{lemma}[theorem]{Lemma}
\newtheorem{proposition}[theorem]{Proposition}
\newtheorem*{thmannounce}{Theorem}
\newtheorem{unnumthmrmk}[theorem]{Theorem}
\theoremstyle{remark}
\newtheorem{remark}[theorem]{Remark}
\newtheorem{philosophy}[theorem]{Philosophy}
\newtheorem{example}[theorem]{Example}
\numberwithin{equation}{section}
\begin{document}
\title[$K$-theory of semi-linear endomorphisms]{$K$-theory of semi-linear endomorphisms via the Riemann--Hilbert correspondence}
\author{Oliver Braunling}
\address{Albert Ludwig University of Freiburg, Eckerstra\ss e 1, D-79104 Freiburg, Germany}
\urladdr{http://www.braunling.org/}
\thanks{The author has been supported by the GK1821 \textquotedblleft Cohomological
Methods in Geometry\textquotedblright.}

\begin{abstract}
Grayson, developing ideas of Quillen, has made computations of the $K$-theory
of `semi-linear endomorphisms'. In the present text we develop a technique to
compute these groups in the case of Frobenius semi-linear actions. The main
idea is to interpret the semi-linear modules as crystals and use a positive
characteristic version of the\ Riemann--Hilbert correspondence. We also
compute the $K$-theory of the category of \'{e}tale constructible $p$-torsion sheaves.

\end{abstract}
\maketitle

If $R$ is a commutative regular $\mathbf{F}_{q}$-algebra, we may define the
Frobenius skew ring%
\begin{equation}
R[F]:=\left.  R\{F\}\right.  /\left.  \left\langle x^{q}F-Fx\mid x\in
R\right\rangle \text{.}\right. \label{lcin1}%
\end{equation}
This is a non-commutative ring. If we ignore the meaning of $q:=\#\mathbf{F}%
_{q}$ and pretended \textquotedblleft$q=1$\textquotedblright\ formally, this
definition would output the ordinary polynomial ring. However, one of the most
fundamental properties of $K$-theory is its $\mathbf{A}^{1}$-invariance, i.e.%
\begin{equation}
K(R)\overset{\sim}{\longrightarrow}K(R[T])\text{,}\label{lcin2}%
\end{equation}
so one is tempted to hope that this remains true when $q\neq1$. Our first
result is that this is indeed the case:

\begin{thmannounce}
Let $X/\mathbf{F}_{q}$ be a smooth separated scheme. Then there is an
equivalence in $K$-theory%
\[
K(X)\overset{\sim}{\longrightarrow}K(\mathsf{Coh}_{\mathcal{O}_{X}%
[F]}(X))\text{.}%
\]
Here $\mathsf{Coh}_{\mathcal{O}_{X}[F]}(X)$ refers to coherent right
$\mathcal{O}_{X}[F]$-module sheaves.
\end{thmannounce}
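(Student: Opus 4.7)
My plan is to prove a Frobenius-twisted analogue of Quillen's $\mathbf{A}^{1}$-invariance theorem $K(R)\cong K(R[T])$. Because the commutation rule $Fa=a^{q}F$ makes $\mathcal{O}_{X}[F]$ free as a right $\mathcal{O}_{X}$-module on $\{F^{n}\}_{n\ge 0}$, the induction functor
\[
\Phi\colon \mathsf{Coh}(\mathcal{O}_{X})\longrightarrow \mathsf{Coh}_{\mathcal{O}_{X}[F]}(X),\qquad \mathcal{F}\longmapsto \mathcal{F}\otimes_{\mathcal{O}_{X}}\mathcal{O}_{X}[F],
\]
is exact and coherence-preserving (any $\mathcal{O}_{X}$-generating set of $\mathcal{F}$ is an $\mathcal{O}_{X}[F]$-generating set of its induction). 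This supplies the candidate map $K(X)\to K(\mathsf{Coh}_{\mathcal{O}_{X}[F]}(X))$.

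For the inverse direction, the key step I would pursue is a two-term resolution of every coherent right $\mathcal{O}_{X}[F]$-module $\mathcal{M}$ by induced coherent modules,
\[
0\longrightarrow (F^{\ast}\mathcal{G})\otimes_{\mathcal{O}_{X}}\mathcal{O}_{X}[F]\xrightarrow{\ \delta\ }\mathcal{G}\otimes_{\mathcal{O}_{X}}\mathcal{O}_{X}[F]\longrightarrow \mathcal{M}\longrightarrow 0,
\]
where $\mathcal{G}\subseteq \mathcal{M}$ is a coherent $\mathcal{O}_{X}$-submodule generating $\mathcal{M}$ as an $\mathcal{O}_{X}[F]$-module and the differential $\delta$ encodes the $F$-action. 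The Frobenius pullback $F^{\ast}\mathcal{G}$ on the left (rather than a second copy of $\mathcal{G}$) is forced by the commutation relation $Fa=a^{q}F$; the whole sequence is the Ore-extension analogue of the standard resolution $0\to M[T]\xrightarrow{T-\phi}M[T]\to M\to 0$ that underlies ordinary $\mathbf{A}^{1}$-invariance. Smoothness of $X$ guarantees that $F^{\ast}\mathcal{G}$ is again coherent (Frobenius is finite). Feeding this resolution into Quillen's resolution theorem---or, equivalently, into a Waldhausen additivity argument on the associated cofibre sequence of induced modules---should then identify $\Phi$ as a $K$-theory equivalence.

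The main obstacle I anticipate is twofold. First, producing the two-term resolution globally on a non-affine $X$ with the correct functoriality requires $\mathcal{O}_{X}[F]$ to be coherent of bounded global dimension, which must be checked from the $\mathcal{O}_{X}$-freeness of the Ore extension. Second, and more seriously, $F^{\ast}$ is \emph{not} the identity on $K(X)$ in general---it multiplies $\operatorname{Pic}(X)$ by $q$, for instance---so the naive class count $[\mathcal{M}]=\Phi([\mathcal{G}])-\Phi([F^{\ast}\mathcal{G}])$ does not obviously collapse, and one needs structural information about how $\Phi$ interacts with $F^{\ast}$. This is precisely where the positive-characteristic Riemann--Hilbert correspondence should enter: it translates coherent $\mathcal{O}_{X}[F]$-modules into constructible étale $p$-torsion sheaves, converting $F^{\ast}$ into the Galois-theoretic Frobenius, where Artin--Schreier-type exact sequences supply the required cancellation. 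This is also the natural tie-in with the separately computed $K$-theory of étale constructible $p$-torsion sheaves mentioned in the abstract.
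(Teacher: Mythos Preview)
Your overall strategy---realize the map as induction $\Phi(\mathcal{F})=\mathcal{F}\otimes_{\mathcal{O}_X}\mathcal{O}_X[F]$ and prove a twisted $\mathbf{A}^1$-invariance---is exactly right, and this is also the paper's functor. But the execution has two genuine problems.

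First, a small slip: the relation $Fa=a^qF$ makes $\mathcal{O}_X[F]$ free as a \emph{left} $\mathcal{O}_X$-module on $\{F^n\}$, not as a right module (writing $rF$ as $Fs$ would require $q$-th roots). This is harmless for the exactness of $\Phi$, since $(-)\otimes_{\mathcal{O}_X}\mathcal{O}_X[F]$ uses the left $\mathcal{O}_X$-structure on $\mathcal{O}_X[F]$.

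The serious gap is the two-term resolution. A resolution of the shape you want does exist, but only with $\mathcal{M}$ itself in place of your coherent submodule $\mathcal{G}$, and with $F_\ast\mathcal{M}$ (restriction along Frobenius) rather than $F^\ast\mathcal{G}$ on the left:
\[
0\longrightarrow (F_\ast\mathcal{M})\otimes_{\mathcal{O}_X}\mathcal{O}_X[F]\longrightarrow \mathcal{M}\otimes_{\mathcal{O}_X}\mathcal{O}_X[F]\longrightarrow \mathcal{M}\longrightarrow 0.
\]
This is the correct twisted Koszul sequence for right $\mathcal{O}_X[F]$-modules. The trouble is that a finitely presented right $\mathcal{O}_X[F]$-module $\mathcal{M}$ is almost never coherent over $\mathcal{O}_X$, so the two left terms are \emph{not} induced from $\mathsf{Coh}(X)$ and the resolution theorem does not apply to the subcategory $\Phi(\mathsf{Coh}(X))$. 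If instead you take a coherent generating submodule $\mathcal{G}\subset\mathcal{M}$ as you propose, the kernel of $\mathcal{G}\otimes_{\mathcal{O}_X}\mathcal{O}_X[F]\twoheadrightarrow\mathcal{M}$ has no reason to be of the form $(F^\ast\mathcal{G})\otimes_{\mathcal{O}_X}\mathcal{O}_X[F]$: the relations among generators of $\mathcal{M}$ are arbitrary, not encoded by a single Frobenius twist of $\mathcal{G}$. So neither version of the resolution gives what you need.

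Your proposed fix via the Riemann--Hilbert correspondence is a wrong turn. The paper proves the equivalence $K(X)\simeq K(\mathsf{Coh}_{\mathcal{O}_X[F]}(X))$ entirely ring-theoretically, with no \'etale input; Riemann--Hilbert enters only later, for the separate computations involving Cartier crystals and constructible sheaves. What actually replaces your resolution argument is Gersten's non-Noetherian strengthening of Quillen's filtered-ring theorem: one filters $R[F]$ by $F$-degree and must verify that $R[F][T]$ is (a)~right \emph{coherent} and (b)~right regular. Point~(b) is a result of Linquan Ma bounding the right global dimension of $R[F]$ (and his proof uses precisely the twisted Koszul sequence above). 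Point~(a) is the real work: $R[F]$ is essentially never Noetherian once $\dim R\ge 1$, and coherence of $A$ does not imply coherence of $A[T]$ (Soublin). The paper proves right \emph{stable} coherence of $R[F]$ by adapting Emerton's left-coherence argument to the right side, with $F$-finiteness replacing $F$-flatness. Once (a) and (b) hold affinely, Gersten's theorem gives $K(R)\simeq K(\mathsf{Mod}_{fp}R[F])$, and one globalizes by Zariski descent (a localization sequence for $\mathsf{Coh}_{\mathcal{O}_X[F]}$).
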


See Theorem \ref{thm_TwistedA1InvarianceOfKTheory}. The proof will be much
like the one for \textquotedblleft$q=1$\textquotedblright, but with a critical
technical complication: $\mathcal{O}_{X}[F]$ is neither left nor right
Noetherian when $q\neq1$. But this can be managed, thanks to Emerton's insight
that they remain coherent rings \cite{EmertonCoh}, i.e. all finitely generated
ideals are automatically finitely presented. He shows left coherence, and we
complement this with right coherence in the present text.\medskip

Alternatively, we could look at those (right) $\mathcal{O}_{X}[F]$-module
sheaves which are additionally coherent as $\mathcal{O}_{X}$-module sheaves.
This yields the category of coherent Cartier modules $\mathsf{CohCart}(X)$ of
Blickle and B\"{o}ckle \cite{MR2863904}. Its $K$-theory sees additional
arithmetic information:

\begin{thmannounce}
Suppose $X/\mathbf{F}_{q}$ is a smooth separated scheme. Then there is a long
exact sequence in $K$-theory%
\[
\cdots\rightarrow K_{m}(X)\rightarrow K_{m}(\mathsf{CohCart}(X))\rightarrow
K_{m}(\mathsf{\acute{E}t}_{c}(X,\mathbf{F}_{q}))\rightarrow\cdots\text{.}%
\]

\end{thmannounce}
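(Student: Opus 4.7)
The plan is to produce a localization sequence of exact (in fact abelian) categories
\[
\mathsf{CohCart}(X)^{\mathrm{nil}} \hookrightarrow \mathsf{CohCart}(X) \longrightarrow \mathsf{\acute{E}t}_{c}(X,\mathbf{F}_{q}),
\]
where the middle term is the Serre subcategory of locally nilpotent coherent Cartier modules, and the quotient functor is the positive-characteristic Riemann--Hilbert (``solutions'') functor. Applying Quillen's localization theorem to this sequence then gives a fiber sequence of $K$-theory spectra, yielding the desired long exact sequence, provided the left-hand $K$-theory can be identified with $K(X)$.

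The right half of the diagram is the deepest input. For this I would invoke the Blickle--B\"{o}ckle version of the Emerton--Kisin Riemann--Hilbert correspondence: the category of coherent Cartier modules, localized at nil-isomorphisms, is equivalent to the category of constructible \'{e}tale $\mathbf{F}_q$-sheaves on $X$ (via the composition of Cartier crystals with Emerton--Kisin's contravariant equivalence between unit $\mathcal{O}_X[F]$-modules and constructible sheaves). What I need is the stronger statement that the subcategory of locally nil Cartier modules is Serre, and that the resulting abelian Serre quotient identifies \emph{as an abelian category} with $\mathsf{\acute{E}t}_c(X,\mathbf{F}_q)$; this requires a careful reading of the Blickle--B\"{o}ckle theory to extract an equivalence at the level of abelian (not merely triangulated) categories.

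The left half is handled by d\'{e}vissage. A coherent Cartier module $M$ with $C^{n}=0$ admits a finite filtration $0=M_{0}\subset M_{1}\subset\cdots\subset M_{n}=M$ with $C(M_{i})\subset M_{i-1}$, so the subquotients are coherent $\mathcal{O}_X$-modules with trivial Cartier structure. Hence the fully faithful embedding $\mathsf{Coh}(X)\hookrightarrow\mathsf{CohCart}(X)^{\mathrm{nil}}$ (``put $C=0$'') satisfies Quillen's d\'{e}vissage hypothesis, so it induces an equivalence in $K$-theory. Since $X$ is smooth and separated, the resolution theorem gives $K(X)\simeq K(\mathsf{Coh}(X))$, producing the desired identification $K(X)\simeq K(\mathsf{CohCart}(X)^{\mathrm{nil}})$.

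The main obstacle is the passage between the triangulated/crystalline formulation of Riemann--Hilbert in char $p$, which is how it is usually stated, and the abelian-categorical exact-sequence formulation needed for Quillen localization. In particular, one must check: (i) that nil-isomorphisms between coherent Cartier modules are precisely the morphisms with locally nilpotent kernel and cokernel, so that $\mathsf{CohCart}(X)^{\mathrm{nil}}$ is a Serre subcategory, and (ii) that the resulting Serre quotient matches $\mathsf{\acute{E}t}_c(X,\mathbf{F}_q)$ on the nose as an abelian category, including that every constructible sheaf lifts to some coherent Cartier module (essential surjectivity of the quotient functor). Once these abelian-level compatibilities are in place, the remaining assembly via Quillen's theorem is formal.
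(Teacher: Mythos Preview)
Your overall architecture matches the paper's exactly: the Serre subcategory of nilpotent Cartier modules, Quillen's localization theorem, and the d\'{e}vissage identification $K(\mathsf{Coh}(X))\simeq K(\mathsf{CohCart}_{\mathrm{nil}}(X))$ via the ``$C=0$'' embedding are precisely the ingredients used.

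The gap is in your identification of the Serre quotient. The abelian category $\mathsf{CartCrys}(X):=\mathsf{CohCart}(X)/\mathsf{CohCart}_{\mathrm{nil}}(X)$ is \emph{not} equivalent to $\mathsf{\acute{E}t}_c(X,\mathbf{F}_q)$ as an abelian category. The Emerton--Kisin correspondence, composed with the Blickle--B\"{o}ckle equivalence, yields
\[
\mathsf{CartCrys}(X)\ \overset{\sim}{\longrightarrow}\ \mathsf{\acute{E}t}_{perv}(X,\mathbf{F}_q)^{op},
\]
the opposite of the heart of Gabber's \emph{perverse} $t$-structure on $D^b_c(X_{\acute{e}t},\mathbf{F}_q)$, not the standard one. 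As the paper remarks explicitly, $\mathsf{\acute{E}t}_{perv}(X,\mathbf{F}_q)$ and $\mathsf{\acute{E}t}_c(X,\mathbf{F}_q)$ are very different abelian categories once $\dim X\geq 1$, so the ``careful reading'' you propose cannot produce the abelian-level equivalence you want; the obstacle you flagged is real and not a matter of bookkeeping.

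The paper's fix is to make the comparison only at the level of $K$-theory. Passing to the opposite category is harmless for $K$-theory, and then Neeman's theorem of the heart gives
\[
K(\mathsf{\acute{E}t}_{perv}(X,\mathbf{F}_q))\ \simeq\ K(\mathsf{\acute{E}t}_c(X,\mathbf{F}_q)),
\]
since both arise as hearts of bounded $t$-structures on the same triangulated category $D^b_c(X_{\acute{e}t},\mathbf{F}_q)$, which admits a Waldhausen model. This $K$-theoretic input is the missing step in your sketch; with it inserted after the localization sequence, your argument becomes the paper's.
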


Here $\mathsf{\acute{E}t}_{c}(X,\mathbf{F}_{q})$ denotes the abelian category
of constructible \'{e}tale sheaves with $\mathbf{F}_{q}$-coefficients. This
computation is based on the positive characteristic version of the
Riemann--Hilbert correspondence of Emerton--Kisin \cite{MR2071510} and
Blickle--B\"{o}ckle \cite{MR2863904}.\medskip

We also describe $K(\mathsf{\acute{E}t}_{c}(X,\mathbf{F}_{q}))$ to some
extent. So far, the only computation regarding the $K$-theory of this category
that I've seen in the literature is due to Taelman \cite{taelmanwoodshole}. He
develops a function-sheaf correspondence for these \'{e}tale sheaves. To each
sheaf and $\mathbf{F}_{q}$-rational point, one pulls back the sheaf to this
point and takes the trace of the Frobenius action. This assigns a value to
each rational point. Taelman shows that this construction factors over the
$K_{0}$-group of $\mathsf{\acute{E}t}_{c}(X,\mathbf{F}_{q})$, and thus gives
rise to a short exact sequence%
\begin{equation}
0\rightarrow K_{0}(\mathsf{\acute{E}t}_{c}(X,\mathbf{F}_{q}%
))^{\operatorname*{Tr}=0}\rightarrow K_{0}(\mathsf{\acute{E}t}_{c}%
(X,\mathbf{F}_{q}))\rightarrow\bigoplus_{x\in X(\mathbf{F}_{q})}\mathbf{F}%
_{q}\rightarrow0\label{lTae}%
\end{equation}
and he gives explicit generators for the term on the left-hand side. We
augment this computation of the $K_{0}$-group as follows:

\begin{thmannounce}
Suppose $X/\mathbf{F}_{q}$ is a smooth scheme. Then%
\[
K_{m}(\mathsf{\acute{E}t}_{c}(X,\mathbf{F}_{q}))=\left\{
\begin{array}
[c]{ll}%
\text{\emph{prime-to-}}p\text{\emph{\ torsion}} & \text{for }m=2i+1>0\text{,}%
\\
0 & \text{for }m=2i\text{, }i>0\text{,}\\
\bigoplus\mathbf{Z} & \text{for }m=0\text{,}%
\end{array}
\right.
\]
where the direct sum in the last row runs over all simple Cartier crystals on
$X$ (or equivalently simple perverse \'{e}tale $\mathbf{F}_{q}$-sheaves).
Among them, there is a canonical set of $\#X(\mathbf{F}_{q})$ generators which
surject on the right-hand side in Sequence \ref{lTae}, while the remaining
generators all map to zero under the function-sheaf correspondence.
\end{thmannounce}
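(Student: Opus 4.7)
The plan is to bypass the long exact sequence of the previous theorem and compute $K_m(\mathsf{\acute{E}t}_c(X,\mathbf{F}_q))$ directly by d\'{e}vissage, taking Quillen's $K$-theory of finite fields as the key input. The pattern of the claimed answer — zero in positive even degrees, prime-to-$p$ torsion in positive odd degrees, and free of rank one per simple in degree zero — matches $K_m(\mathbf{F}_{q^r})$ exactly; this is a strong hint that $K_*(\mathsf{\acute{E}t}_c(X,\mathbf{F}_q))$ ought to split as $\bigoplus_\alpha K_*(\operatorname{End}(S_\alpha))$ indexed by the simple objects $S_\alpha$, with $\operatorname{End}(S_\alpha)=\mathbf{F}_{q^{r_\alpha}}$ by Schur's lemma.

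The structural setup: in the perverse t-structure, $\mathsf{\acute{E}t}_c(X,\mathbf{F}_q)$ is an artinian abelian length category whose simple objects are the intermediate extensions $\mathrm{IC}(Z,\mathcal{L})$ of simple $\mathbf{F}_q$-local systems $\mathcal{L}$ on locally closed smooth strata $Z\subset X$. Via the Riemann--Hilbert correspondence of Emerton--Kisin and Blickle--B\"{o}ckle, these are in bijection with simple Cartier crystals, confirming the theorem's index set. I would then filter by the Serre subcategories $\mathcal{F}_d$ of sheaves supported in dimension $\leq d$, apply Waldhausen/Quillen localization stratum-by-stratum, and combine with Quillen's classical d\'{e}vissage along the isotypic blocks on each stratum to reduce to a coproduct of copies of $K(\mathbf{F}_{q^{r_\alpha}})$. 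All boundary maps in the resulting localization sequences must vanish: a positive-even-degree $K$-group of a finite field is zero, and a prime-to-$p$ torsion source cannot map non-trivially into the torsion-free $K_0$ of a finite field. Hence the sequences split and we obtain the claimed direct-sum decomposition, which together with Quillen's formulas for $K_m(\mathbf{F}_{q^r})$ delivers the stated values.

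For the distinguished generators in $K_0$: the skyscrapers $(i_x)_*\mathbf{F}_q$ at rational points $x\in X(\mathbf{F}_q)$ are simple perverse sheaves whose Taelman trace functions are the characteristic functions of $\{x\}$, so they surject onto $\bigoplus_{x\in X(\mathbf{F}_q)}\mathbf{F}_q$ in Sequence \ref{lTae}. Any other simple $\mathrm{IC}(Z,\mathcal{L})$ is either supported on a stratum disjoint from $X(\mathbf{F}_q)$ (for instance a closed point of degree $>1$) or corresponds to a non-trivial Frobenius character on $Z$; in either case the associated trace function vanishes identically on $X(\mathbf{F}_q)$, placing the class in the $\operatorname{Tr}=0$ kernel. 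The main obstacle lies in the d\'{e}vissage itself: the semisimple objects in $\mathsf{\acute{E}t}_c$ do not form a Serre subcategory, since extensions between distinct simple local systems on a single stratum are generically non-trivial (coming from \'{e}tale $H^1$ of the stratum), so Quillen's d\'{e}vissage cannot be applied to the semisimple core directly. One must either set up a spectral sequence from the full isotypic filtration and use the parity and $p$-torsion vanishing of $K_m(\mathbf{F}_{q^r})$ to force its degeneration, or invoke a general splitting result for $K$-theory of abelian length categories.
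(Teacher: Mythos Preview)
Your overall shape is right and in fact very close to the paper's argument, but the ``main obstacle'' you identify is not an obstacle at all, and worrying about it leads you into an unnecessary stratification/localization detour. Quillen's d\'{e}vissage theorem \cite[\S 5, Theorem 4]{MR0338129} does \emph{not} require the subcategory to be Serre: it only asks that $\mathcal{B}\subset\mathcal{A}$ be full, closed under subobjects, quotients, and finite direct sums, and that every object of $\mathcal{A}$ admit a finite filtration with graded pieces in $\mathcal{B}$. The semisimple objects in any finite-length abelian category satisfy all of this (a sub or quotient of a semisimple object is semisimple; Jordan--H\"{o}lder supplies the filtration). Closure under extensions is simply not needed. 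So once you know the category is finite-length, d\'{e}vissage applies in one step and gives $K_m\cong\coprod_Z K_m(\operatorname{End}(Z))$ indexed by simples, with no boundary maps, no spectral sequence, and no degeneration argument.

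Two further points where your sketch is thin compared to the paper. First, Schur's lemma alone gives only that $\operatorname{End}(Z)$ is a division ring over $\mathbf{F}_q$; you need it to be a \emph{finite} field to invoke Quillen's computation. The paper secures this by working on the Cartier-crystal side and citing Blickle--B\"{o}ckle \cite[Corollary 4.16]{MR2863904}, which shows $\operatorname{End}(Z)$ is finite-dimensional over $\mathbf{F}_q$, and then Wedderburn forces it to be a field. Second, the finite-length property itself is not free: on the crystal side it is again a nontrivial result of Blickle--B\"{o}ckle \cite[Corollary 4.7]{MR2863904}, and this is precisely why the paper first passes through $\mathsf{CartCrys}(X)$ via Riemann--Hilbert and Neeman's theorem of the heart rather than arguing directly in $\mathsf{\acute{E}t}_c(X,\mathbf{F}_q)$. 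Your claim that all simples other than the skyscrapers at rational points have vanishing trace function is also not justified as stated (a simple local system on a higher-dimensional stratum containing a rational point need not have trace zero there); the paper handles the $K_0$ statement by invoking Taelman's explicit description rather than by such a case analysis.
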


See Theorem \ref{thm_Summary_KThyOfCartCrystals}. For this result, we first
switch to the perverse $t$-structure by Neeman's theorem of the heart, and
then to Cartier crystals under the Riemann--Hilbert correspondence and work in
the latter context. There, the computation crucially uses the strong
finiteness properties proven by Blickle and\ B\"{o}ckle \cite{MR2863904}, and
Quillen's computation of the $K$-theory of finite fields.\medskip

There is also a completely different angle from which to look at this text:
Instead of the $K$-theory of a category itself, one may look at the category
of pairs $(X,\alpha)$, where $X$ is an object and $\alpha$ acts on it via (1)
an endomorphism, (2) or an automorphism, or (3) a semi-linear endomorphism of
$X$. Grayson has a series of articles investigating these cases
\cite{MR0480699}, \cite{MR535839}, \cite{MR929766}. The case of automorphisms
has gained some fame for its r\^{o}le in the motivic Atiyah-Hirzebruch
spectral sequence \cite{MR1340843}, and recently there has been some renewed
interest in the cases of endo- and automorphisms, e.g. \cite{MR3167624},
\cite{MR3430369}.

The case of semi-linear actions seems to be studied less. Basically, an
example of Quillen in \cite{MR0338129} and Grayson's paper \cite{MR929766}
seem to be the only ones computing higher $K$-groups (beyond $K_{1}$) in the
case of semi-linear actions. Seen from this angle, the above theorems provide
a large supply of further computations in the special case when the
semi-linear action comes from the\ Frobenius. Grayson's paper used a
\textquotedblleft Frobenius $\mathbf{P}^{1}$\textquotedblright, which inspires
our use of a twisted affine line, however the projective line seems to be
definable only for rings for which the Frobenius is an automorphism.\medskip

In this direction, the technical advance of the present text is that we can
handle rings where the Frobenius is \textit{not} surjective. This is the key
point which allows us to handle rings and varieties over $\mathbf{F}_{q}$ of
dimension $\geq1$.\bigskip

Further results:\medskip

This paper has a precursor in a computation in Quillen's \textquotedblleft
Higher Algebraic $K$-Theory I\textquotedblright. Quillen considers the
$K$-theory of semi-linear endomorphisms of the single point
$X:=\operatorname*{Spec}(\mathbf{F}_{p}^{\operatorname*{sep}})$. Based on it,
he defines a skew field $D$ and computes its $K$-theory. The meaning of these
$K$-theory classes remained mysterious. We try to elucidate his computation by
adapting it to schemes over $\mathbf{F}_{p}$ in \S \ref{sect_QuillenRevisited}%
. We explain that it is impossible to define an analogous skew field in this
generality, but there is an abelian category \textquotedblleft$\mathsf{QD}%
$\textquotedblright, which imitates the behaviour of modules over the
non-existing $D$. Assuming Parshin's conjecture, we can compute its rational
$K$-theory and find%
\[%
\begin{tabular}
[c]{l|l|l}%
Quillen's $K_{m}(D)_{\mathbf{Q}}$ & Generalized $K_{m}(\mathsf{QD}%
(X))_{\mathbf{Q}}$ & $m$\\\hline
$\mathbf{Q}$ & $\mathbf{Q}$ & $0$\\
$\mathbf{Q}\oplus\mathbf{Q}$ & $\mathbf{Q}\oplus K_{0}(\mathsf{\acute{E}t}%
_{c}(X,\mathbf{F}_{q}))_{\mathbf{Q}}$ & $1$\\
$0$ & $0$ & $\geq2$.\\
\multicolumn{1}{r|}{$\qquad$for $X=\operatorname*{Spec}(\mathbf{F}%
_{p}^{\operatorname*{sep}})$} & $\qquad$for $X$ smooth, & \\
\multicolumn{1}{r|}{} & $\qquad$projective over $\mathbf{F}_{q}$ &
\end{tabular}
\]
One can see how the individual summands in Quillen's computation generalize,
which might be a first step in understanding the bigger picture behind
Quillen's computation.

\section{Preparations}

\textit{Conventions:} A ring $R$ denotes an associative unital algebra, not
necessarily commutative. Ring morphisms are always supposed to preserve the
unit. If $R$ is a ring, $\mathsf{Mod}_{fg}(R)$ resp. $\mathsf{Mod}_{fp}(R)$
denote the category of finitely generated resp. finitely presented right
$R$-modules. We write $\mathsf{P}_{f}(R)$ to denote the exact category of
finitely generated projective right $R$-modules. We call a ring \emph{right
regular} if every finitely presented right module has finite projective
dimension. Unlike most commutative algebra texts, we do not demand regular
rings to be Noetherian. In particular, left and right global dimensions may
differ.\medskip

We pick once and for all a prime number $p$ and a prime power $q=p^{r}$ with
$r\geq1$. Let $R$ be a ring. We write $R\{X\}$ for the free associative ring
in a non-commuting variable $X$. We write $R[X]$ to denote the polynomial ring
over $R$, i.e. in this case the variable $X$ commutes with all elements of the
ring. In other words,%
\begin{equation}
R[X]=R\{X\}/\left\langle rX-Xr\mid r\in R\right\rangle \text{.}\label{lttt0}%
\end{equation}
We will only deviate from this notation in one special case: Suppose $R$
denotes a commutative $\mathbf{F}_{q}$-algebra. Define the non-commutative
\emph{Frobenius skew ring}%
\begin{equation}
R[F]:=\frac{R\{F\}}{\left\langle x^{q}F-Fx\mid x\in R\right\rangle }%
\text{.}\label{ltttta1}%
\end{equation}
So we reserve the special letter \textquotedblleft$F$\textquotedblright\ for
this definition differing from the one in line \ref{lttt0}. This is fairly
common practice in the literature.

\subsection{Strategy}

Before we start with precise arguments, let us just explain what we want to
do: The ring inclusion $R\subset R[F]$ induces a morphism in $K$-theory%
\[
K(R)\rightarrow K(R[F])
\]
and we would like to show that this functor induces an equivalence in
$K$-theory. The idea is to imitate Quillen's proof of $\mathbf{A}^{1}%
$-invariance of $K$-theory for regular $R$: He proves the equivalence
$K(R)\overset{\sim}{\rightarrow}K(R[T])$, where $R[T]$ is the ordinary
polynomial ring and $R$ a regular Noetherian ring. The key tool in Quillen's
proof is the following result:

\begin{theorem}
[{\cite[Theorem 7]{MR0338129}}]\label{thm_Quillen}Let $A$ be an increasingly
filtered ring%
\[
A=\bigcup_{s\geq0}A^{\leq s}%
\]
such that $A^{\leq s}\cdot A^{\leq t}\subseteq A^{\leq s+t}$. Suppose the
associated graded $\operatorname*{Gr}A:=\bigoplus_{s\geq0}A^{\leq s}/A^{\leq
s-1}$ (with the tacit understanding that $A^{\leq-1}:=0$) is right Noetherian
and has finite $\operatorname*{Tor}$-dimension as a right module over
$A^{\leq0}$. Moreover, assume that $A^{\leq0}$ has finite $\operatorname*{Tor}%
$-dimension as a right $\operatorname*{Gr}A$-module. Then the ring
homomorphism $A^{\leq0}\hookrightarrow A$ induces an equivalence in $K$-theory%
\[
K(\mathsf{Mod}_{fg}A^{\leq0})\overset{\sim}{\longrightarrow}K(\mathsf{Mod}%
_{fg}A)\text{.}%
\]

\end{theorem}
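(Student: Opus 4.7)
The plan is to prove the theorem in two steps, using $\operatorname{Gr} A$ as an intermediate term to bridge $A^{\leq 0}$ and $A$, each step leveraging one of the finite $\operatorname{Tor}$-dimension hypotheses.

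First, I would establish the equivalence
\[
K(\mathsf{Mod}_{fg}(A^{\leq 0})) \;\overset{\sim}{\longrightarrow}\; K(\mathsf{Mod}_{fg}(\operatorname{Gr} A))
\]
using the two natural ring homomorphisms $A^{\leq 0}\hookrightarrow \operatorname{Gr} A$ (inclusion of the degree-zero part) and $\operatorname{Gr} A \twoheadrightarrow A^{\leq 0}$ (quotient by the positive-degree ideal). These two maps should give mutually inverse $K$-theory maps thanks to the Tor-finiteness hypotheses going in both directions: concretely one applies Quillen's resolution theorem twice, once using that $\operatorname{Gr} A$ has finite $\operatorname{Tor}$-dimension over $A^{\leq 0}$ (so that $-\otimes_{A^{\leq 0}} \operatorname{Gr} A$ lifts to $K$-theory), and once using the converse Tor-finiteness. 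Noetherianity of $\operatorname{Gr} A$ is essential here to ensure $\mathsf{Mod}_{fg}(\operatorname{Gr} A)$ is abelian so that the resolution theorem applies cleanly.

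Second, I would establish the equivalence
\[
K(\mathsf{Mod}_{fg}(\operatorname{Gr} A)) \;\overset{\sim}{\longrightarrow}\; K(\mathsf{Mod}_{fg}(A))
\]
by interposing the exact category of \emph{filtered} finitely generated $A$-modules $(M, M^{\leq \bullet})$ equipped with \emph{good} filtrations, i.e.\ those for which $\operatorname{Gr} M = \bigoplus_{s} M^{\leq s}/M^{\leq s-1}$ is finitely generated over $\operatorname{Gr} A$. Good filtrations on a given finitely generated $A$-module exist precisely because $\operatorname{Gr} A$ is Noetherian, and any two of them are comparable up to a finite shift. Associated-graded then gives a functor from this filtered category to $\mathsf{Mod}_{fg}(\operatorname{Gr} A)$, while forgetting the filtration gives a functor to $\mathsf{Mod}_{fg}(A)$; I would aim to show both induce $K$-theory equivalences.

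The hardest part will be this second step, specifically verifying that ``forget the filtration'' is a $K$-theory equivalence. This requires a Quillen Theorem~A style cofinality argument that exploits the non-uniqueness of good filtrations: by comparability, the fibre over a given $A$-module is a filtered poset with contractible nerve. A Rees-type construction $\widetilde{A} := \bigoplus_{s \geq 0} A^{\leq s}\, t^{s}$, graded and contained in $A[t]$, makes the interpolation geometric, since specialising $t\mapsto 1$ recovers $A$ and $t\mapsto 0$ recovers $\operatorname{Gr} A$. A secondary subtlety is that $A$ itself is not assumed Noetherian---only $\operatorname{Gr} A$ is---so subquotients arising in filtered short exact sequences must be refined to keep their filtrations good, a finicky but standard verification that rests crucially on Noetherianity of $\operatorname{Gr} A$.
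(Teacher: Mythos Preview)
The paper does not prove this statement at all: it is quoted verbatim as Quillen's Theorem~7 from \cite{MR0338129} and used only as motivation, with the actual application going through Gersten's non-Noetherian refinement (the result labelled Theorem~\ref{thm_GerstenVersionOfQuillenThm}). There is therefore no proof in the present paper to compare your proposal against.

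Since you have attempted a proof anyway, one remark on its content: your Step~1 has a genuine gap. The ring maps $A^{\leq 0}\hookrightarrow\operatorname{Gr}A$ and $\operatorname{Gr}A\twoheadrightarrow A^{\leq 0}$ compose to the identity in one order but not the other---the composite $\operatorname{Gr}A\to A^{\leq 0}\to\operatorname{Gr}A$ kills all positive degrees---so the two base-change functors are not mutually inverse on $K$-theory by a bare resolution-theorem argument. Quillen's actual route interposes the category of finitely generated \emph{graded} $\operatorname{Gr}A$-modules: he first shows (his Theorem~6) that $K(\mathsf{Mod}_{fg}A^{\leq 0})\to K(\mathrm{gr}\text{-}\mathsf{Mod}_{fg}\operatorname{Gr}A)$ is an equivalence via a shift/d\'evissage argument exploiting the grading, and only then links the graded category to $\mathsf{Mod}_{fg}A$ through filtered modules and associated graded, essentially as in your Step~2. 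So your second step is on the right track, but the target of the associated-graded functor should be the graded module category rather than $\mathsf{Mod}_{fg}(\operatorname{Gr}A)$, and your first step needs to be replaced by the graded-module argument.
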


Quillen then combines this with a comparison result between the $K$-theory of
finitely generated projective right $A$-modules and finitely generated right
$A$-modules, namely%

\begin{equation}
K(A^{\leq0})\overset{\sim}{\longrightarrow}K(\mathsf{Mod}_{fg}A^{\leq0}%
)\qquad\text{and}\qquad K(A)\overset{\sim}{\longrightarrow}K(\mathsf{Mod}%
_{fg}A)\text{,}\label{ltad1}%
\end{equation}
which requires $A_{0}$ and $A$ to be right Noetherian and right regular. So,
this is the plan. However, we cannot just follow this strategy, because it
collapses at a number of places for the Frobenius skew ring $R[F]$:

\begin{enumerate}
\item The ring $R[F]$ is practically never right Noetherian. In particular,
the category of finitely generated right $R[F]$-modules, $\mathsf{Mod}%
_{fg}R[F]$, a priori need not be an abelian category. For the moment, this is
not too bad, as it still is an exact category and thus has a notion of $K$-theory.

\item The ring $R[F]$ is indeed filtered by $R[F]^{\leq d}:=\{\sum_{i=0}%
^{d}r_{i}F^{i}\}$. One easily computes that%
\[
\operatorname*{Gr}R[F]\simeq R[F]\text{,}%
\]
i.e. the associated graded is isomorphic to the original ring. However, since
$R[F]$ is rarely right Noetherian, this means that $\operatorname*{Gr}R[F]$
will fail to be right Noetherian, too. So it cannot satisfy the assumptions of
Theorem \ref{thm_Quillen}.

\item The analogues of the comparison results in line \ref{ltad1} require
right regularity. This is in fact a rather recent result of Linquan Ma
\cite[Theorem 3.2]{MR3160413}.

\item We solve the non-Noetherian problem by proving right coherence of $R[F]
$ under suitable conditions (based on a method of Emerton, \cite{EmertonCoh}).
But this still does not quite suffice because if a ring $A$ is right coherent,
its polynomial ring $A[T]$ need not be right coherent as well (by an example
due to Soublin \cite[\S 5]{MR0260799}), and we will have no better tool than
proving the relevant right coherence statements by hand. Based on this, we can
then use a strengthening of Quillen's theorem, Theorem \ref{thm_Quillen}, for
right coherent rings, due to Gersten \cite{MR0396671}.
\end{enumerate}

\section{Ring-theoretic properties of the Frobenius skew ring}

\subsection{Generalities}

Let us collect a few properties of the Frobenius skew ring, defined as in line
\ref{ltttta1}: Suppose $R$ is a commutative $\mathbf{F}_{q}$-algebra. Every
element in $R[F]$ has a unique presentation as a left polynomial%
\[
\alpha=\sum_{i=0}^{d}r_{i}F^{i}\qquad\text{with}\qquad r_{i}\in R\text{.}%
\]
Thus, $R[F]$ is a free left $R$-module. It is also a right $R$-module because
of $rF\cdot s=rs^{q}F$. However, it need not be free as a right module, nor
will it in general be possible to represent elements as right polynomials
$\sum F^{i}r_{i}$.

If $f:R\rightarrow S$ is a ring morphism of commutative $\mathbf{F}_{q}%
$-algebras, the defining relation in line \ref{ltttta1} is preserved and one
obtains an induced morphism $R[F]\rightarrow S[F]$.

We would like to speak about finitely generated left or right $R[F]$-modules.
However, we directly run into problems since $R[F]$ is only very rarely left
or right Notherian. This is a problem because for a general ring, its category
of finitely generated modules will not even be an abelian category.

The question of being Noetherian was settled in full generality by Yuji
Yoshino \cite{MR1271618}.

\begin{theorem}
[Yoshino]\label{thm_Yoshino_CharacterizationNoetherian}Suppose $R$ is a
Noetherian commutative $\mathbf{F}_{p}$-algebra.

\begin{enumerate}
\item Then $R[F]$ is left Noetherian iff $R$ is a direct product of finitely
many fields.

\item Then $R[F]$ is right Noetherian iff $R$ is Artinian and all closed
points in $\operatorname*{Spec}R$ have perfect residue fields.
\end{enumerate}
\end{theorem}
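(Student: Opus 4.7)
My plan is to prove each of (1) and (2) by separating into a sufficiency and a necessity direction, and in each case reducing the problem to a skew polynomial ring $k\{F;\sigma\}$ over a field, with $\sigma(x)=x^q$ the Frobenius. The key observation enabling this reduction is that idempotents in a ring of characteristic $p$ are Frobenius-fixed (since $e^q=e$), so any decomposition $R=\prod_i R_i$ coming from central idempotents induces a matching decomposition $R[F]=\prod_i R_i[F]$. This lets me pass freely between one-sided Noetherianity of $R[F]$ and of its local factors.

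For the sufficiency of (1), once $R=\prod_i k_i$ is a finite product of fields, I reduce to a single field $k$. The ring $k\{F;\sigma\}$ with $\sigma$ injective (automatic for Frobenius on a characteristic-$p$ field) admits a left Euclidean algorithm on $F$-degree: given $f$ and $g$ with $\deg f\geq\deg g$, the leading coefficient of $g$ can be scaled by some $c\in k$ so that $c\cdot(\text{leading term of }g)$ matches that of $f$, using injectivity of $\sigma$. Hence $k[F]$ is a left principal ideal ring, in particular left Noetherian. For the sufficiency of (2), I reduce to $R$ local Artinian with perfect residue field $k$, where $\sigma$ is now bijective on $k$, giving a \emph{two-sided} Euclidean algorithm on the residue-field quotient. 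I then induct on the nilpotency order of the maximal ideal $\mathfrak{m}$: a strictly ascending chain of right ideals in $R[F]$ would, after reducing modulo $\mathfrak{m}^i R[F]$ for the smallest $i$ at which it fails to stabilise, produce such a chain in $(R/\mathfrak{m}^i)[F]$, contradicting the inductive hypothesis.

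For the necessity direction of (1), suppose $R[F]$ is left Noetherian but $R$ is not a finite product of fields, so that either $\dim R\geq 1$ or $R$ has a nonzero nilpotent. In either case I pick $a\in R$ that is neither zero nor a unit and produce an ascending chain of left ideals built from $(F-a)$, $(F-a)(F-a^q)$, and so on, whose failure to stabilise traces the infinite Frobenius orbit of $a$ modulo suitable primes. For the necessity direction of (2), I first rule out $\dim R\geq 1$ by an analogous argument on the right, then suppose some closed point has an imperfect residue field $k$. Pick $a\in k\setminus k^q$, lift to $R$, and consider the right ideals generated by $aF,aF^2,aF^3,\ldots$. Because right multiplication acts as $(rF^n)\cdot s=r\,s^{q^n}F^n$, absorbing $aF^{n+1}$ into the right ideal generated by $aF,\ldots,aF^n$ would force $a$ to lie in $k^q$, a contradiction; hence the chain never stabilises.

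The principal technical obstacle is the necessity direction of (2): the right $R$-module structure on $R[F]$ is Frobenius-twisted, so comparing leading coefficients of generators of a right ideal requires careful tracking of iterates of $\sigma$, and this is precisely the step at which imperfectness of residue fields surfaces as the obstruction. A secondary concern is the initial reduction to local factors, which leans on the Frobenius-invariance of idempotents sketched above; this is straightforward, but indispensable for transferring the local analysis to the global statement.
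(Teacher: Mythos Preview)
The paper does not give its own proof of this theorem; it simply cites Yoshino \cite[Theorem~1.3]{MR1271618} and remarks that ``the proof of this general version is quite involved.'' So there is no in-paper argument to compare against, only the warning that the result is harder than it looks. Your proposal underestimates this, and in one place it contains a concrete error.

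The sufficiency directions are essentially fine. The idempotent observation ($e^q=e$, hence $Fe=e^qF=eF$) does give $R[F]\cong\prod R_i[F]$, and the left Euclidean algorithm over a field with injective $\sigma$ is standard; your Artinian induction for (2) is loosely phrased but can be made to work via the filtration by $\mathfrak{m}^iR[F]$ (each graded piece is a finitely generated right $k[F]$-module, and $k[F]$ is two-sided Euclidean when $k$ is perfect).

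The necessity of (1) is too vague: a chain ``built from $(F-a),(F-a)(F-a^q),\ldots$'' is a \emph{descending} chain of principal left ideals, not ascending, and you do not say what ascending chain you actually mean. A workable construction is the chain $I_n=\sum_{j=0}^n R[F]\cdot aF^j$; one checks that the $F^{n+1}$-coefficient of anything in $I_n$ lies in $Ra^q$, and $a\notin Ra^q$ when $a$ is a non-zero non-unit in a domain (or a non-zero element with $a^q=0$ in the nilpotent case).

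The necessity of (2) is where your argument breaks. Your proposed chain of right ideals generated by $aF,aF^2,\ldots,aF^n$ is constant: since $aF^{n+1}=aF\cdot F^{n}$, already $aF^{n+1}\in aF\cdot k[F]$, so every $J_n$ equals $J_1$. The assertion that ``absorbing $aF^{n+1}$ would force $a\in k^q$'' is false; no constraint on $a$ arises. The honest obstruction is different: for $k$ imperfect, $k[F]$ fails the right Ore condition (for $c\in k\setminus k^q$ one has $Fk[F]\cap cFk[F]=0$, since equality of $F^1$-coefficients would force $c\in k^q$), and every right Noetherian domain is right Ore. This is exactly the mechanism the paper alludes to later (Lemma~\ref{Lemma_CharacterizeOreDomains} and the remark following it). Combined with passing to residue fields and ruling out $\dim R\geq 1$ by an argument symmetric to the left case, this gives the necessity in (2). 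Your identification of imperfectness as the obstruction is correct; the specific chain you wrote down does not witness it.
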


See \cite[Theorem 1.3]{MR1271618}. The proof of this general version is quite
involved. In the classical case of $R=k$ a perfect field, one can intepret
$R[F]$ as a twisted polynomial ring. This case has textbook treatments, e.g.
\cite[2.9, Theorem, (iv)]{MR1811901}.

As we can see, we need a workaround handling the lack of Noetherian properties
since the above cases are far too special to be useful. They all have
$\operatorname*{Spec}R$ zero-dimensional. We shall use the formalism of
coherent rings. We recall all necessary foundations:

Suppose $A$ is a ring. A right $A$-module is called \emph{coherent} if (1) it
is finitely generated, and (2) every finitely generated right submodule is
finitely presented. For every short exact sequence of right $A$-modules,%
\[
0\rightarrow K\rightarrow L\rightarrow M\rightarrow0
\]
all modules are coherent as soon as any two of them are coherent (see
Soublin's survey \cite{MR0260799} for this and related properties).

A ring $A$ is called \emph{right coherent} if every finitely generated right
ideal is also finitely presented. Equivalently, every finitely generated right
submodule of a free right module is finitely presented. Clearly right
Noetherian rings are also right coherent.

\begin{example}
[{\cite[\S 5, Corollaire]{MR0260799}}]The simplest example of a ring which is
not right Noetherian, but right coherent, is the polynomial ring in countably
many variables over a commutative Noetherian ring $R$, i.e. $A:=R[X_{1}%
,X_{2},\ldots]$.
\end{example}

\begin{proposition}
\label{prop_CharacterizationsOfRightCoherentRings}For a ring $A$, the
following are equivalent:

\begin{enumerate}
\item $A$ is right coherent.

\item $A$ is a coherent right $A$-module over itself.

\item The category of finitely presented right $A$-modules is abelian.
\end{enumerate}
\end{proposition}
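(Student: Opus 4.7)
The plan is to close the triangle $(1) \Leftrightarrow (2)$ and $(1) \Rightarrow (3) \Rightarrow (1)$, exploiting throughout the two-out-of-three closure property for coherent modules in short exact sequences that was quoted in the excerpt.

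For $(1) \Leftrightarrow (2)$ one just unpacks definitions: $A$ is finitely generated as a right $A$-module (by the unit), and the finitely generated right submodules of $A$ are precisely the finitely generated right ideals. So ``$A$ is a coherent right $A$-module'' says exactly that every finitely generated right ideal is finitely presented, which is $(1)$.

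For $(1) \Rightarrow (3)$ the strategy is to show that, under $(1)$, the full subcategories of coherent and of finitely presented right $A$-modules coincide, and then to invoke that coherent modules are closed under kernels and cokernels inside all right $A$-modules. Granting $(2)$, each $A^n$ is coherent by induction on $n$, using two-out-of-three on $0 \to A^{n-1} \to A^n \to A \to 0$. If $M$ is finitely presented with presentation $A^m \to A^n \to M \to 0$, the image of $A^m \to A^n$ is a finitely generated submodule of the coherent module $A^n$, hence itself coherent; so $M$ sits in a short exact sequence between two coherent modules and is therefore coherent. Conversely, any coherent $M$ is finitely presented, since $M$ viewed as a finitely generated submodule of itself is finitely presented by the defining clause of coherence. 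The closure of coherent modules under kernels and cokernels (via the usual image factorization and two-out-of-three) now gives $(3)$.

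For $(3) \Rightarrow (1)$, let $I \subseteq A$ be a finitely generated right ideal. Then $A/I$ is finitely presented since $A$ is finitely presented and $I$ is finitely generated. The surjection $A \twoheadrightarrow A/I$ is a morphism of finitely presented right $A$-modules, so its kernel $I$ is finitely presented by $(3)$, which is $(1)$. Apart from bookkeeping, there is no real obstacle; the only subtle point worth flagging is the standard convention that ``abelian'' in $(3)$ is meant in the sense of an abelian subcategory, so that kernels computed inside the subcategory agree with those computed in all right $A$-modules.
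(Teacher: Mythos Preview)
The paper states this proposition without proof, treating it as a standard fact from the literature (Soublin's survey is the implicit reference), so there is no paper proof to compare against. Your argument is correct and is essentially the standard one.

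One remark on the subtlety you flag in $(3)\Rightarrow(1)$: even if one reads ``abelian'' in $(3)$ merely as ``abelian as an abstract category'' rather than ``abelian subcategory with exact inclusion'', the argument still goes through. Cokernels in $\mathsf{Mod}_{fp}(A)$ automatically agree with those in $\mathsf{Mod}(A)$ (a quotient of a finitely presented module by a finitely generated submodule is finitely presented), so the surjection $A\twoheadrightarrow A/I$ is an epimorphism in $\mathsf{Mod}_{fp}(A)$ and hence, by abelianness, the cokernel of its kernel $\iota\colon K\to A$. Since cokernels agree, the image of $\iota$ in $\mathsf{Mod}(A)$ is $I$; and $\iota$ is injective in $\mathsf{Mod}(A)$ because any $x\in\ker(\iota)$ gives a map $A\to K$ (a test object which \emph{is} finitely presented) killed by $\iota$, forcing $x=0$. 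Thus $K\cong I$ is finitely presented. So your caveat, while prudent, is not strictly necessary.
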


We shall need the following facts:

\begin{proposition}
\label{prop_PropsAroundCoherence}Suppose $A$ is a right coherent ring.

\begin{enumerate}
\item Then every finitely generated projective right $A$-module is coherent.
(\cite[\S 3, Prop. 9]{MR0260799})

\item If $I$ is a two-sided ideal in $A$, which is finitely generated as a
right $A$-module, then the quotient ring $A/I$ is also right coherent.
(\cite[\S 4, Cor. 1]{MR0260799})
\end{enumerate}
\end{proposition}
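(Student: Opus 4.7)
Both parts are essentially formal consequences of right coherence of $A$ together with the stability properties of coherent modules in short exact sequences already recorded in the excerpt, combined with Proposition \ref{prop_CharacterizationsOfRightCoherentRings}. I do not expect a substantive obstacle in either part.

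For (1), the plan is to first record two closure properties of coherent right $A$-modules. Finite direct sums are coherent: apply two-out-of-three to the split exact sequence $0 \to M \to M \oplus N \to N \to 0$. A finitely generated right submodule of a coherent module is coherent: the submodule is finitely presented by the definition of coherence, and any further finitely generated submodule of it is also a finitely generated submodule of the ambient coherent module, hence finitely presented. Since $A$ is coherent as a right module over itself (Proposition \ref{prop_CharacterizationsOfRightCoherentRings}(2)), induction gives that every $A^{n}$ is coherent. A finitely generated projective $P$ is a direct summand, in particular a finitely generated submodule, of some $A^{n}$, and so is coherent.

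For (2), I would let $\bar{J} \subseteq A/I$ be a finitely generated right ideal and lift it to a right ideal $J \subseteq A$ containing $I$. Lifting finitely many generators of $\bar{J}$ to $J$ and adjoining the finitely many generators of $I$ (as a right $A$-module, by hypothesis) displays $J$ as a finitely generated right ideal in $A$. Right coherence of $A$ then makes both $I$ and $J$ finitely presented over $A$, and since finitely presented right $A$-modules form an abelian category (Proposition \ref{prop_CharacterizationsOfRightCoherentRings}(3)), the cokernel $J/I$ is finitely presented over $A$ as well. The final step is to descend this finite presentation to $A/I$: pick any presentation
\[
A^{\oplus m} \longrightarrow A^{\oplus n} \longrightarrow J/I \longrightarrow 0.
\]
Since $I$ annihilates $J/I$, the submodule $I \cdot A^{\oplus n}$ lies in the kernel, so this presentation factors through the exact sequence $(A/I)^{\oplus m} \to (A/I)^{\oplus n} \to J/I \to 0$. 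Thus $\bar{J}$ is finitely presented over $A/I$, proving right coherence of $A/I$.

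The only point that wants even slight attention is the last descent: one should check that after reducing the finitely many generators of $\ker(A^{\oplus n} \to J/I)$ modulo $I \cdot A^{\oplus n}$, they still generate $\ker((A/I)^{\oplus n} \to J/I)$. This is immediate, since $I \cdot A^{\oplus n}$ is entirely contained in the original kernel. Everything else is a direct manipulation of the definitions, and no new idea beyond Proposition \ref{prop_CharacterizationsOfRightCoherentRings} is required.
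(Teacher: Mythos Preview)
Your proof is correct. Note, however, that the paper does not actually supply a proof of this proposition: it simply records the two statements with citations to Soublin \cite{MR0260799} and moves on. So there is no ``paper's own proof'' to compare against beyond the pointer to the literature.

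What you have written is the standard argument, and it is entirely self-contained given the two-out-of-three property and Proposition~\ref{prop_CharacterizationsOfRightCoherentRings}. Part~(1) is exactly as one would expect. In part~(2), the only place where the hypothesis that $I$ is two-sided enters is in the claim that $I$ annihilates $J/I$ on the right (so that the presentation over $A$ descends to one over $A/I$); you use this implicitly but correctly. The check you flag at the end---that the images of the generators of $\ker(A^{\oplus n}\to J/I)$ still generate $\ker((A/I)^{\oplus n}\to J/I)$---is indeed immediate from $I\cdot A^{\oplus n}\subseteq\ker(A^{\oplus n}\to J/I)$.
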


The latter fact implies that if the polynomial ring $A[T]$ is right coherent,
so is $A$. The converse direction is known to be false:

\begin{example}
[Soublin]There exists a commutative coherent ring $A$ such that $A[T]$ is not
coherent. See \cite[\S 5, Prop. 18]{MR0260799}.
\end{example}

\subsection{Right stable coherence\label{sect_CoherenceTheorems}}

So, the notion of a coherent ring is not $\mathbf{A}^{1}$-invariant. Inspired
by this, Gersten has introduced the following notion:

\begin{definition}
[{\cite[Definition 1.2]{MR0396671}}]A ring $A$ is called \emph{right stably
coherent} (a.k.a. \textquotedblleft right super-coherent\textquotedblright) if
for any index set $I$ the multi-variable polynomial ring $A[X_{i}]_{i\in I}$
is right coherent.\footnote{Gersten has called such rings `right
super-coherent'. However, the majority of the literature prefers to call them
`right stably coherent'. Indeed, Aschenbrenner has introduced another concept
called super-coherence, which is related, but different from stable coherence,
and in particular different from Gersten's usage.}
\end{definition}

Now, Matt Emerton has shown that the Frobenius skew ring is left coherent.

\begin{theorem}
[Emerton \cite{EmertonCoh}]\label{marker_EmertonLeftCoherence}Suppose

\begin{itemize}
\item $R$ is a Noetherian commutative $\mathbf{F}_{q}$-algebra, and

\item $R$ is $F$-flat (e.g. if $R$ is regular\footnote{By\ Kunz Theorem
\cite[Theorem 2.1]{MR0252389}, if $R$ is a reduced Noetherian commutative
$\mathbf{F}_{q}$-algebra, it is $F$-flat if and only if it is regular.}).
\end{itemize}

Then the ring $R[F]$ is left coherent.
\end{theorem}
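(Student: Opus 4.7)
The plan is to adapt a Hilbert-basis-style argument to this non-Noetherian setting, using the $F$-degree filtration of $R[F]$ and exploiting $F$-flatness to control how leading-coefficient syzygies propagate under Frobenius twists. By the left analogue of Proposition~\ref{prop_CharacterizationsOfRightCoherentRings}, it suffices to show every finitely generated left ideal $I=R[F]\alpha_{1}+\cdots+R[F]\alpha_{n}$ is finitely presented; equivalently, that the kernel $K$ of the left $R[F]$-linear surjection $R[F]^{n}\twoheadrightarrow I$ is finitely generated. Writing $\alpha_{i}=a_{i}F^{d_{i}}+(\text{lower order in }F)$ with $a_{i}\in R$, the defining relation $Fr=r^{q}F$ forces the product $(b_{i}F^{e_{i}}+\cdots)\alpha_{i}$ to have top term $b_{i}a_{i}^{q^{e_{i}}}F^{e_{i}+d_{i}}$, so the top-degree part of a relation $(\beta_{i})\in K$ of total degree $D$ is an $R$-linear equation of the form $\sum_{e_{i}+d_{i}=D} b_{i}a_{i}^{q^{e_{i}}}=0$ involving Frobenius twists of the leading coefficients $a_{i}$.

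I would then analyse these leading-coefficient equations as $D$ varies. For each $D$, the tuples of top coefficients of relations of total degree $D$ form an $R$-submodule of $R^{n}$, and left multiplication by $F$ (turning a relation of degree $D$ into one of degree $D+1$) shows that the $(D+1)$-th such module contains the entrywise Frobenius pullback of the $D$-th. Because $R$ is Noetherian, the first syzygy module of the normalized leading-coefficient tuple is finitely generated, and by flatness of the Frobenius $R\to R$, $r\mapsto r^{q}$, its Frobenius pullbacks describe \emph{all} syzygies among the twisted tuples $(a_{i}^{q^{e_{i}}})$: no genuinely new leading-coefficient relations appear that are not already accounted for by Frobenius shifts of relations appearing up to some bounded degree $D_{0}$. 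Identifying such a $D_{0}$ is the main obstacle, and it is precisely where the $F$-flatness hypothesis is indispensable; reassuringly, the Kunz footnote shows this is equivalent to regularity in the reduced Noetherian case, so the hypothesis matches the stated one.

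With such a $D_{0}$ in hand, a descending induction on degree finishes the proof: given $\beta\in K$ of total degree $e>D_{0}$, its top-coefficient tuple is, by the previous step, an $R$-linear combination of top-coefficient tuples of $F^{k}$-multiples of elements in $K\cap(R[F]^{\leq D_{0}})^{n}$, so we subtract an appropriate left $R[F]$-combination to strictly lower the degree and iterate. The submodule $K\cap(R[F]^{\leq D_{0}})^{n}$ sits inside the $R$-finite free module $(R[F]^{\leq D_{0}})^{n}$ of rank $n(D_{0}+1)$, hence is finitely generated over $R$ by Noetherianity, and therefore \emph{a fortiori} finitely generated as a left $R[F]$-module. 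This gives the required finite generation of $K$, and hence left coherence of $R[F]$.
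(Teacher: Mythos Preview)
Your outline has a genuine gap at the descent step. Write $S_D\subseteq R^n$ for the abstract syzygy module $\operatorname{Syz}\bigl((a_i^{q^{D-d_i}})_i\bigr)$ and $T_D\subseteq S_D$ for the $R$-submodule of top-coefficient tuples of elements of $K^{\leq D}:=K\cap\bigoplus_i R[F]^{\leq D-d_i}$. Frobenius flatness does give $S_{D+1}=F^{\ast}S_D$ once $D\geq\max_i d_i$, so the \emph{abstract} syzygies are controlled by a single level. But your descent subtracts left $R[F]$-combinations of elements of $K^{\leq D_0}$, and those can only cancel a top tuple lying in $F^{(e-D_0)\ast}T_{D_0}$, not in all of $F^{(e-D_0)\ast}S_{D_0}=S_e$. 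You have silently identified $T_{D_0}$ with $S_{D_0}$, i.e.\ assumed that every abstract leading syzygy at level $D_0$ lifts to an honest relation in $K$ of degree $\leq D_0$. This need not hold: given $(b_i)\in S_{D_0}$, the tuple $(b_iF^{D_0-d_i})_i$ has image under $\pi$ lying in $I\cap R[F]^{\leq D_0-1}$, but nothing guarantees this element can be rewritten as $\sum\beta_i'\alpha_i$ with $\deg\beta_i'\leq D_0-1-d_i$ (strictness of the filtration on $I$ is exactly what is at stake, and it is not automatic). You flag finding $D_0$ as ``the main obstacle'' and then assume it away; what is missing is precisely a mechanism forcing $T_{D+1}=F^{\ast}T_D$ for $D\gg 0$, and flatness alone does not supply it.

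The paper does not reprove Emerton's theorem but mirrors his method in the right-handed analogue (Lemmas~\ref{lem_rightemertonkeylemma}--\ref{lem_finpres}, Proposition~\ref{prop_RightSuperCoherence}), and that route sidesteps the difficulty entirely. The Key Lemma characterizes finite generation of a one-sided ideal $I$ by finite $R$-generation of the cokernel of the $F$-action on $I$, and a companion lemma characterizes finite presentation of a finitely generated module by finite $R$-generation of the corresponding kernel. For a left ideal $I\hookrightarrow R[F]$ that kernel injects into the kernel of left multiplication by $F$ on $R[F]$, which is zero; hence $I$ is finitely presented and left coherence follows in one line. No stabilization of syzygy modules is needed. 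If you want to repair the direct approach, the honest statement to prove is that $\operatorname{gr}K=\bigoplus_D T_D$ is a finitely generated graded module over $\operatorname{gr}R[F]\cong R[F]$; unwinding what that requires leads you back to (the left version of) Emerton's Key Lemma.
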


This is the main result of \cite{EmertonCoh}. We will now prove the right
analogue of Emerton's theorem. The assumption of $F$-flatness will need to be
replaced by $F$-finitness, as it turns out. Apart from the minor modifications
circling around this, the proof will be a mirror image of Emerton's proof,
albeit augmented with additional commuting polynomial variables.\medskip

Because only right stable coherence will be useful for us later, we shall need
to prove right coherence for polynomial rings over $R[F]$. Let us set up the
notation. We have%
\begin{equation}
R[F][T]:=\left\{  \left.  \sum r_{i,\ell}F^{i}T^{\ell}\right\vert r_{i,\ell
}\in R\text{, all but finitely many }r_{i,\ell}\text{ are zero}\right\}
\label{lah0}%
\end{equation}
and recall that by the construction of this ring, we have the relations%
\begin{equation}
rF^{i}T^{\ell}\cdot s=rs^{q^{i}}F^{i}T^{\ell}\qquad T\cdot rF^{i}T^{\ell
}=rF^{i}T^{\ell+1}\qquad TF=FT\label{lah1}%
\end{equation}
for all $r,s\in R$. In particular, $T$ commutes with all other terms.

\begin{example}
The ring $R[F][T]$ is different from $R[T][F]$, because the latter ring
satisfies the relation $FT=T^{q}F$ instead.
\end{example}

\begin{remark}
\label{rmk_MultivariableRing}All of the following considerations also work
verbatim for multi-variable polynomial rings over $R[F]$, i.e. $R[F][T_{1}%
,\ldots,T_{m}]$ for any integer $m\geq0$. In fact, just read the definition in
line \ref{lah0} as allowing a multi-index $\underline{\ell}=(\ell_{1}%
,\ldots,\ell_{m})$ for $\ell$, and the same finiteness condition on the
coefficients $r_{i,\ell}$.\ For the sake of legibility, we proceed by writing
a single $T$.
\end{remark}

\begin{definition}
[Right twisting the action]If $M$ is a right $R[T]$-module, write
$\widetilde{M}$ for the right $R[T]$-module with the same elements as $M$, but
with the twisted right $R[T]$-module structure%
\[
m\underset{\widetilde{M}}{\cdot}sT^{j}:=m\underset{M}{\cdot}s^{q}T^{j}\text{.}%
\]
When convenient, we may even use this notation for right $R[F][T]$-modules,
meaning%
\[
m\underset{\widetilde{M}}{\cdot}sF^{i}T^{j}:=m\underset{M}{\cdot}s^{q}%
F^{i}T^{j}\text{.}%
\]
\newline\textsl{If} there is also a left $R[T]$-module structure on $M$, we do
not change it.
\end{definition}

\begin{remark}
\label{remark_on_notation}One can also denote this by $M^{(1)}$ or by
$F_{\ast}M$ for $F:R\rightarrow R$ being the map $r\mapsto r^{q}$. The latter
might actually be the most official notation, but I am very hesitant to use
it. Firstly, if $M$ carries both a left and right $F$ action, the notation
$F_{\ast}M$ is very ambiguous. Secondly, we will switch a lot between schemes
and rings and then the Frobenius morphism goes in opposite directions
depending on which viewpoint we use. Both feels all too prone to lead to confusion.
\end{remark}

\begin{lemma}
\label{lem_fingen1}Suppose $R$ is a commutative $\mathbf{F}_{q}$-algebra. The
following are equivalent:

\begin{enumerate}
\item $R$ is a finitely generated (right) $R^{q}$-module.

\item $\widetilde{R}$ is a finitely generated (right) $R$-module.

\item For every $d\geq0$, $R$ is a finitely generated (right) $R^{q^{d}}$-module.
\end{enumerate}
\end{lemma}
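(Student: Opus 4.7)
The plan is to handle the three implications essentially by unpacking definitions, with only one small induction argument required. I would prove (1)$\Leftrightarrow$(2) tautologically from the definition of the twist, note that (3)$\Rightarrow$(1) is the case $d=1$, and prove (1)$\Rightarrow$(3) by induction using the fact that raising to the $q$-th power is a ring endomorphism.

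For the first equivalence, observe that by definition the right $R$-action on $\widetilde R$ is $m\cdot_{\widetilde R} s = ms^q$, where the right-hand side is computed in $R$. Hence the image of $R$ acting on $\widetilde R$ is exactly the subring $R^q\subseteq R$. A subset $\{x_1,\dots,x_n\}\subseteq R$ therefore generates $\widetilde R$ as a right $R$-module if and only if every $r\in R$ can be written as $\sum_i x_i s_i^q$ for some $s_i\in R$, which is precisely saying $R=\sum_i x_iR^q$. So (1) and (2) say the same thing with the same generating set.

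The implication (3)$\Rightarrow$(1) is trivial: specialise to $d=1$. For (1)$\Rightarrow$(3), I induct on $d$. The case $d=0$ is clear, since $R^{q^0}=R$. For the inductive step, assume $R=\sum_{j=1}^m y_j\,R^{q^d}$ is finitely generated over $R^{q^d}$, and pick a finite set $\{x_1,\ldots,x_n\}$ from (1) so that $R=\sum_i x_i R^q$. Because $R$ has characteristic $p$ and $q=p^r$, the map $\phi^d\colon r\mapsto r^{q^d}$ is a ring endomorphism of $R$. Applying $\phi^d$ to the identity $R=\sum_i x_iR^q$ yields, as subsets of $R$,
\[
R^{q^d} \;=\; \sum_i x_i^{q^d}\,R^{q^{d+1}}.
\]
Substituting into the inductive hypothesis gives $R=\sum_{i,j} y_j x_i^{q^d}\,R^{q^{d+1}}$, a finite generating set over $R^{q^{d+1}}$.

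There is no real obstacle here; the content is purely formal. The only points where care is needed are (i) keeping the twisted scalar action $m\cdot_{\widetilde R} s = ms^q$ conceptually separate from the ring product in $R$ when deducing (1)$\Leftrightarrow$(2), and (ii) using that Frobenius is a ring (not just additive) endomorphism so that applying it termwise to $r=\sum x_i s_i^q$ genuinely produces a presentation of $r^{q^d}$ in $\sum x_i^{q^d} R^{q^{d+1}}$. The commutativity of $R$ and the $\mathbf{F}_q$-algebra hypothesis ensure both.
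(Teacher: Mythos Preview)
Your argument is correct and complete. The paper itself gives no proof beyond the sentence ``The proof is straightforward,'' and what you have written is precisely the straightforward argument: the identification of (1) and (2) via the definition of the twist, and the induction for (1)$\Rightarrow$(3) using that $r\mapsto r^{q^d}$ is a ring endomorphism.
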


The proof is straightforward. The ring $R$ is called $F$\emph{-finite }if
these conditions are met. This property is very common. Perfect fields are $F
$-finite, and $F$-finiteness is closed under taking finitely generated algebra
extensions, localizations and homomorphic images.

\begin{lemma}
\label{lem_fingenA}Suppose $R$ is a commutative $F$-finite $\mathbf{F}_{q}$-algebra.

\begin{enumerate}
\item If $M$ is a finitely generated right $R[F][T]$-module, then so is
$\widetilde{M}$.

\item If $R$ is additionally Noetherian: If $M$ is a finitely presented right
$R[F][T]$-module, then so is $\widetilde{M}$.
\end{enumerate}
\end{lemma}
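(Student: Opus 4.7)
My plan is to prove both parts by reduction to the corresponding statements for $\widetilde{R[F][T]}$ itself. The twist $M\mapsto\widetilde{M}$ is exact: it does not alter the underlying abelian group, and any right $R[F][T]$-module homomorphism $M\to N$ is automatically also a module map $\widetilde{M}\to\widetilde{N}$ under the twisted actions, by an immediate check from the defining formula. Consequently, applying the twist to any finite presentation $R[F][T]^{s}\to R[F][T]^{n}\to M\to 0$ yields an exact sequence $\widetilde{R[F][T]}^{s}\to\widetilde{R[F][T]}^{n}\to\widetilde{M}\to 0$ of right $R[F][T]$-modules, so $\widetilde{M}$ is finitely generated (resp.\ finitely presented) whenever $\widetilde{R[F][T]}$ is.

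For part (1), I will use $F$-finiteness (Lemma \ref{lem_fingen1}) to pick $a_{1},\ldots,a_{k}\in R$ with $R=\sum_{i}a_{i}R^{q}$, and show that these generate $\widetilde{R[F][T]}$ as a right $R[F][T]$-module. Indeed, any $\alpha=\sum_{j,\ell}r_{j,\ell}F^{j}T^{\ell}\in R[F][T]$ has each coefficient expressible as $r_{j,\ell}=\sum_{i}a_{i}c_{i,j,\ell}^{q}$ for some $c_{i,j,\ell}\in R$; defining $\beta_{i}:=\sum_{j,\ell}c_{i,j,\ell}F^{j}T^{\ell}\in R[F][T]$, the twisted action gives $\alpha=\sum_{i}a_{i}\cdot\beta_{i}$ inside $\widetilde{R[F][T]}$, because the twist replaces each $R$-scalar $c$ by $c^{q}$.

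For part (2), I will show that the kernel of the surjection $R[F][T]^{k}\twoheadrightarrow\widetilde{R[F][T]}$ built from the $a_{i}$ is finitely generated. Using that $\{F^{j}T^{\ell}\}$ is a left $R$-basis of $R[F][T]$, the tuple $(\alpha_{1},\ldots,\alpha_{k})$ with $\alpha_{i}=\sum_{j,\ell}s_{i,j,\ell}F^{j}T^{\ell}$ lies in the kernel iff $\sum_{i}a_{i}s_{i,j,\ell}^{q}=0$ in $R$ for every $(j,\ell)$. The problem therefore reduces to the level-zero kernel $K:=\ker(R^{k}\to\widetilde{R})$, which is an $R$-submodule of $R^{k}$ because $(\lambda s_{i})\mapsto\lambda^{q}\sum_{i}a_{i}s_{i}^{q}$. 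Under the Noetherian hypothesis, $K$ is finitely generated over $R$, say by $v_{1},\ldots,v_{m}\in R^{k}$. Viewed inside $R[F][T]^{k}$ at degree $F^{0}T^{0}$, these $v_{\mu}$ generate the full kernel as a right $R[F][T]$-module: given any kernel element, one expresses each level-$(j,\ell)$ slice of $(s_{i,j,\ell})_{i}\in K$ as an $R$-combination $\sum_{\mu}t_{\mu,j,\ell}v_{\mu}$, and then $(\alpha_{i})=\sum_{\mu}v_{\mu}\cdot\gamma_{\mu}$ with $\gamma_{\mu}:=\sum_{j,\ell}t_{\mu,j,\ell}F^{j}T^{\ell}\in R[F][T]$ acting by componentwise right multiplication. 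The main bookkeeping subtlety is keeping track of the Frobenius twist on $R$-coefficients when passing between level-zero relations in $K$ and full relations in the kernel; once $K$ is recognized as an honest $R$-submodule, the lift to $R[F][T]$-generators goes through cleanly.
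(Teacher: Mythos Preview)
Your proof is correct and follows essentially the same strategy as the paper's. Both arguments reduce to showing that $\widetilde{R[F][T]}$ is finitely generated (resp.\ finitely presented) over $R[F][T]$, then use exactness of the twist to transfer this to general $M$. The one genuine difference is in part (2): the paper identifies $\widetilde{R[F][T]}\cong\widetilde{R}\otimes_{R}R[F][T]$ and uses that $R[F][T]$ is free (hence flat) as a left $R$-module to push forward a finite presentation $R^{\oplus n}\to R^{\oplus m}\to\widetilde{R}\to 0$ of the Noetherian $R$-module $\widetilde{R}$; your argument instead computes the kernel of $R[F][T]^{k}\to\widetilde{R[F][T]}$ directly, which amounts to verifying by hand that this kernel is $K\cdot R[F][T]$ for $K=\ker(R^{k}\to\widetilde{R})$. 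This is the same flatness statement unpacked, so the approaches are equivalent in content. One small wording point: for part (1) you only need the surjection $R[F][T]^{n}\twoheadrightarrow M$, not a full presentation, since $M$ is only assumed finitely generated there.
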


\begin{proof}
This follows literally from $F$ being a finite morphism. \textit{(1)}
Concretely, for finite generation, if $\left\{  b_{i}\right\}  _{i\in I}$ is a
finite set of right generators for $M$, and $\left\{  \rho_{s}\right\}  _{s\in
J}$ a finite set of generators of $R$ as a right $R^{q}$-module, then every
element $m\in\widetilde{M}$ can be written as a finite sum $m=\sum b_{i}f_{i}$
for suitable $f_{i}\in R[F][T]$, and each of these as $f_{i}=\sum
_{j,k}r_{i,j,k}F^{j}T^{k}=\sum_{j,k,s}\rho_{s}r_{i,j,k,s}^{q}F^{j}T^{k}$.
Hence,%
\[
m=\sum_{i,s}b_{i}\rho_{s}\underset{M}{\cdot}\sum_{j,k}r_{i,j,k,s}^{q}%
F^{j}T^{k}=\sum_{i,s}b_{i}\rho_{s}\underset{\widetilde{M}}{\cdot}\sum
_{j,k}r_{i,j,k,s}F^{j}T^{k}\text{,}%
\]
proving that $\{b_{i}\rho_{s}\}_{i\in I,s\in J}$ is a finite set of generators
as a right $R[F][T]$-module. \textit{(2)} As $\widetilde{R}$ is a finitely
generated $R$-module, and $R$ is Noetherian, $\widetilde{R}$ is also a
finitely presented $R$-module, i.e. there are $n,m$ such that $R^{\oplus
n}\rightarrow R^{\oplus m}\rightarrow\widetilde{R}\rightarrow0$ is exact.
Since $R[F][T]$ is a free (thus flat) left $R$-module, tensoring it from the
right yields%
\[
R[F][T]^{\oplus n}\rightarrow R[F][T]^{\oplus m}\rightarrow\widetilde
{R}\otimes_{R}R[F][T]\rightarrow0
\]
and the term on the right equals $\widetilde{R}[F][T]=\widetilde{R[F][T]}$
(recall that by our definition of $\widetilde{(-)}$, only the right $R$-module
structure changes, e.g. right multiplication by $r$ becomes right
multiplication by $r^{q}$, but right multiplication by $T$ remains right
multiplication by $T$). Thus, $\widetilde{R[F][T]}$ is a finitely presented
right $R[F][T]$-module. Hence, if%
\[
R[F][T]^{\oplus n}\rightarrow R[F][T]^{\oplus m}\rightarrow M\rightarrow0
\]
is a finite presentation of a right $R[F][T]$-module $M$, we get%
\[
\widetilde{R[F][T]}^{\oplus n}\rightarrow\widetilde{R[F][T]}^{\oplus
m}\rightarrow\widetilde{M}\rightarrow0\text{,}%
\]
presenting $\widetilde{M}$ as a quotient of a finitely presented module by a
finitely generated one, implying that it is finitely presented itself.
\end{proof}

\begin{corollary}
\label{cor_fingenA}Suppose $R$ is a commutative $F$-finite Noetherian
$\mathbf{F}_{q}$-algebra. If $M$ is a finitely generated (resp. presented)
$R$-module, then so is $\widetilde{M}$.
\end{corollary}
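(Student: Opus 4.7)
The plan is to mimic the proof of Lemma \ref{lem_fingenA} in the simpler setting without $F$ and $T$. In fact, the corollary follows formally from Lemma \ref{lem_fingenA} by endowing any $R$-module with the trivial $F$- and $T$-actions (the defining relation $x^q F = F x$ is vacuously satisfied when $F$ acts as zero), so that finite generation (resp.\ presentation) as a right $R[F][T]$-module agrees with the corresponding notion as a right $R$-module. It is more transparent, however, to give the direct argument, which I sketch below.

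For part (1), I would pick a finite set $\{b_i\}_{i \in I}$ of generators of $M$ as a right $R$-module, and by $F$-finiteness (Lemma \ref{lem_fingen1}) a finite set $\{\rho_s\}_{s \in J}$ of generators of $R$ as a right $R^q$-module. Given $m \in \widetilde{M}$, write $m = \sum_i b_i r_i$ in $M$, then decompose each $r_i = \sum_s \rho_s r_{i,s}^q$. This yields
\[
m = \sum_{i,s} b_i \rho_s r_{i,s}^q = \sum_{i,s} (b_i \rho_s) \underset{\widetilde{M}}{\cdot} r_{i,s},
\]
exhibiting $\{b_i \rho_s\}_{i,s}$ as a finite generating set of $\widetilde{M}$. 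For part (2), the quickest route is to invoke the Noetherian hypothesis directly: finitely generated modules over a Noetherian ring are automatically finitely presented, and by (1) we already know $\widetilde{M}$ is finitely generated. Alternatively, one can mirror the proof of Lemma \ref{lem_fingenA}(2): since $R$ is Noetherian $F$-finite, $\widetilde{R}$ is a finitely presented $R$-module, so applying the twist to a finite free presentation $R^n \to R^m \to M \to 0$ produces $\widetilde{R}^n \to \widetilde{R}^m \to \widetilde{M} \to 0$, exhibiting $\widetilde{M}$ as a quotient of a finitely presented module by a finitely generated submodule, hence finitely presented.

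The main (and essentially only) delicate point is keeping straight which side the action is twisted on; once one identifies $\widetilde{M}$ with $M \otimes_R \widetilde{R}$ (where $\widetilde{R}$ is viewed as an $R$-$R$-bimodule whose right structure factors through the Frobenius $r \mapsto r^q$), the tensor argument runs automatically. Otherwise the proof is a routine transcription of Lemma \ref{lem_fingenA} with $R[F][T]$ replaced by $R$, and I expect no real obstacle.
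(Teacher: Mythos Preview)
Your proposal is correct and matches the paper's treatment: the corollary is stated there without proof, as an immediate consequence of (the argument for) Lemma~\ref{lem_fingenA} specialized from $R[F][T]$ to $R$. Your direct argument is exactly this specialization, and the observation that over a Noetherian ring finitely generated equals finitely presented is indeed the cleanest way to handle part~(2).
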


For a short exact sequence of right $R[F][T]$-modules, one gets the diagram of
right $R$-modules%
\[%
\begin{array}
[c]{rccccclc}%
0\rightarrow & \widetilde{A} & \rightarrow & \widetilde{B} & \rightarrow &
\widetilde{C} & \rightarrow0 & \\
& \downarrow &  & \downarrow &  & \downarrow &  & \cdot F\\
0\rightarrow & A & \rightarrow & B & \rightarrow & C & \rightarrow0\text{.} &
\end{array}
\]
The downward arrows denote the \textit{right} action by $F$. Thanks to the
tilde twist in the upper row, this is a right $R$-module homomorphism (had we
not twisted the right action in the top row, this would only be an abelian
group homomorphism).

The following lemmata are right analogues of corresponding statements in
\cite{EmertonCoh}, suitably stabilized along additional commuting polynomial variables.

\begin{lemma}
\label{lem_fingen2}Suppose $R$ is a commutative $F$-finite $\mathbf{F}_{q}%
$-algebra. Then for all $d\geq0$ the following set is a finitely generated
(right) $R[T]$-module:
\end{lemma}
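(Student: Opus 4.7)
The plan is to prove finite generation by decomposing the set along the $F$-degree filtration and exploiting $F$-finiteness through Lemma \ref{lem_fingen1}. The natural candidate for the set in question is the $d$-th piece of the $F$-filtration, i.e. $R[F][T]^{\leq d}=\bigl\{\sum_{i=0}^{d}\sum_{\ell}r_{i,\ell}F^{i}T^{\ell}\bigr\}$. As a right $R[T]$-module this splits into a direct sum $\bigoplus_{i=0}^{d}RF^{i}\cdot R[T]$: the splitting is preserved by right multiplication by $R[T]$ because $T$ commutes with $F$ by line \ref{lah1}, and right multiplication by $s\in R$ on $rF^{i}T^{\ell}$ yields $rs^{q^{i}}F^{i}T^{\ell}$, which stays in the same summand.

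The key observation is that within the $i$-th summand the right $R$-action is twisted: iterating the defining relation $Fs=s^{q}F$ gives $F^{i}\cdot s=s^{q^{i}}F^{i}$. Hence $RF^{i}$, regarded as a right $R$-module, is the $i$-fold twist $\widetilde{R}^{(i)}$, i.e.\ $R$ with the right action precomposed with the $i$-fold Frobenius $r\mapsto r^{q^{i}}$. By Lemma \ref{lem_fingen1}, $F$-finiteness of $R$ is equivalent to $R$ being finitely generated as a right $R^{q^{i}}$-module for every $i\ge 0$, which is exactly the statement that each twist $\widetilde{R}^{(i)}$ is finitely generated as a right $R$-module.

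Concretely, choose a finite right $R$-generating set $\{\rho_{i,s}\}_{s\in J_{i}}$ for $\widetilde{R}^{(i)}$ for each $i=0,\ldots,d$. Then $\{\rho_{i,s}F^{i}\}_{s\in J_{i}}$ generates $RF^{i}\cdot R[T]$ as a right $R[T]$-module, since $T$ is central and the right $R$-action already reproduces all of $RF^{i}$. The finite union $\bigcup_{i=0}^{d}\{\rho_{i,s}F^{i}\}_{s\in J_{i}}$ therefore provides a finite right $R[T]$-generating set for $R[F][T]^{\leq d}$. By Remark \ref{rmk_MultivariableRing} the same argument works verbatim with any number of commuting polynomial variables $T_{1},\ldots,T_{m}$ replacing $T$.

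The main (minor) obstacle is purely bookkeeping: the right $R$-module structure on each $RF^{i}$ is twisted by a different power of Frobenius, so one must consistently work with the $\widetilde{(-)}$ notation introduced before Lemma \ref{lem_fingenA} or unwind the equalities $rF^{i}\cdot s=rs^{q^{i}}F^{i}$ by hand at each step. Once the twisted actions are correctly identified, finite generation of the twists over $R$ upgrades to finite generation over $R[T]$ without further work, because adjoining the commuting, central variable $T$ does not disturb the generating set.
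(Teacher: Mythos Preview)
Your argument is correct and matches the paper's proof essentially line for line: both decompose $Z^{\leq d}$ along the $F$-degree, invoke Lemma~\ref{lem_fingen1} to obtain finite generators $\mathcal{B}_{i}$ of $R$ over $R^{q^{i}}$ for each $i\leq d$, and conclude that $\{sF^{i}\mid 0\leq i\leq d,\ s\in\mathcal{B}_{i}\}$ generates $Z^{\leq d}$ as a right $R[T]$-module. The only cosmetic difference is that you phrase the $i$-th piece as the iterated twist $\widetilde{R}^{(i)}$ and emphasize the direct sum splitting, whereas the paper writes out the same identity $r=\sum_{s\in\mathcal{B}_{i}}s\rho_{s}^{q^{i}}$ by hand inside an arbitrary element of $Z^{\leq d}$.
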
%

\[
Z^{\leq d}:=\left\{  \left.  \sum_{i=0}^{d}\sum_{\ell}r_{i,\ell}F^{i}T^{\ell
}\right\vert r_{i,\ell}\in R\text{, all but finitely many }r_{i,\ell}\text{
are zero}\right\}
\]

\begin{proof}
First of all, we see that it is a right $R[T]$-module by%
\[
r_{i,\ell}F^{i}T^{\ell}\cdot s=r_{i,\ell}s^{q^{i}}F^{i}T^{\ell}\qquad
\text{and}\qquad r_{i,\ell}F^{i}T^{\ell}\cdot T=r_{i,\ell}F^{i}T^{\ell
+1}\text{.}%
\]
By the previous lemma and our assumption that $R$ be $F$-finite, $R$ is a
finitely generated (right) $R^{q^{d}}$-module. Thus, every element $r\in R$
can be written in the shape $r=\sum_{s\in\mathcal{B}_{d}}s\rho_{s}^{q^{d}}$,
where $\mathcal{B}_{d}$ is a finite set of right generators for $R$ and
$\rho_{s}$ the coefficients as a right $R^{q^{d}}$-module (i.e. they act as
$\rho_{s}^{q^{d}}$ with respect to the ordinary right $R$-module structure).
Hence, every element in $Z^{\leq d}$ has the shape%
\[
z=\sum_{\ell}\sum_{i=0}^{d}r_{i,\ell}F^{i}T^{\ell}=\sum_{\ell}\sum_{i}%
^{d}\left(  \sum_{s\in\mathcal{B}_{i}}s\rho_{i,\ell,s}^{q^{i}}\right)
F^{i}T^{\ell}%
\]
and this equals%
\[
=\sum_{i=0}^{d}\sum_{s\in\mathcal{B}_{i}}sF^{i}\sum_{\ell}\rho_{i,\ell
,s}T^{\ell}\text{.}%
\]
We see that $Z^{\leq d}$ is spanned as a right $R[T]$-module by $\left\langle
sF^{i}\right\rangle _{0\leq i\leq d,s\in\mathcal{B}_{i}}$. This is a finite
set since each $\mathcal{B}_{i}$ is finite and $i$ runs through finitely many
values only.
\end{proof}

\begin{lemma}
\label{lem_fingen3}Suppose $R$ is a Noetherian $F$-finite commutative
$\mathbf{F}_{q}$-algebra. Then for every right ideal $I$ in $R[F][T]$, each%
\[
I^{\leq d}:=I\cap Z^{\leq d}%
\]
is a finitely generated (right) $R[T]$-submodule of $I$ and their union
$\bigcup_{d\geq0}I^{\leq d}$ is all of $I$.
\end{lemma}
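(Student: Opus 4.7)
The plan is to reduce the statement to the Noetherian property of the polynomial ring $R[T]$ combined with Lemma \ref{lem_fingen2}. The union part is essentially bookkeeping, so the real content lies in the finite generation of each $I^{\leq d}$.

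First I would verify that $Z^{\leq d}$ is closed under the right $R[T]$-action. From the defining relations in line \ref{lah1} we have $r_{i,\ell} F^{i} T^{\ell}\cdot s=r_{i,\ell} s^{q^{i}} F^{i} T^{\ell}$ and $r_{i,\ell} F^{i} T^{\ell}\cdot T=r_{i,\ell} F^{i} T^{\ell+1}$, so neither right multiplication by elements of $R$ nor by $T$ increases the $F$-degree. Therefore $Z^{\leq d}$ is a right $R[T]$-submodule of $R[F][T]$. Since $I$ is a right ideal (in particular closed under right multiplication by $R[T]\subset R[F][T]$), the intersection $I^{\leq d}=I\cap Z^{\leq d}$ is a right $R[T]$-submodule of $Z^{\leq d}$.

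Next, since $R$ is commutative Noetherian, Hilbert's basis theorem gives that $R[T]$ is Noetherian. Lemma \ref{lem_fingen2} provides a finite right $R[T]$-generating set for $Z^{\leq d}$, so $Z^{\leq d}$ is a finitely generated right module over the Noetherian commutative ring $R[T]$, hence itself a Noetherian right $R[T]$-module. Consequently every right $R[T]$-submodule, and in particular $I^{\leq d}$, is finitely generated as a right $R[T]$-module. (For the multi-variable variant indicated in Remark \ref{rmk_MultivariableRing}, one invokes the same argument with $R[T_1,\ldots,T_m]$ in place of $R[T]$; the Hilbert basis theorem still applies.)

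Finally, every element $\alpha\in R[F][T]$ is a finite sum $\sum r_{i,\ell} F^{i} T^{\ell}$, so only finitely many $F$-powers appear and $\alpha$ belongs to $Z^{\leq d}$ for $d$ at least the maximum exponent of $F$ occurring. Hence $R[F][T]=\bigcup_{d\geq 0} Z^{\leq d}$, which intersects with $I$ to give $I=\bigcup_{d\geq 0} I^{\leq d}$. The only point which required any thought was checking that right multiplication by $R[T]$ genuinely preserves $F$-degree (so that $I^{\leq d}$ is an $R[T]$-submodule, not merely an abelian subgroup); once this is noted, the Noetherianity of $R[T]$ together with Lemma \ref{lem_fingen2} gives the result immediately.
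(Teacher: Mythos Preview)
Your proof is correct and follows essentially the same route as the paper: both arguments verify that $I^{\leq d}$ is a right $R[T]$-submodule of $Z^{\leq d}$, invoke Lemma~\ref{lem_fingen2} and the Hilbert basis theorem to conclude $Z^{\leq d}$ is a Noetherian $R[T]$-module, and then observe that every element of $I$ has bounded $F$-degree. There is nothing to add.
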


\begin{proof}
For any $d\geq0$, we have%
\[
I^{\leq d}=\left\{  \left.  \sum_{\ell}\sum_{i=0}^{d}r_{i,\ell}F^{i}T^{\ell
}\right\vert r_{i,\ell}\in R\text{, all but finitely many }r_{i,\ell}\text{
are zero}\right\}  \cap I\text{.}%
\]
Since both $I$ and $Z^{\leq d}$ are right $R[T]$-modules, so is their
intersection $I^{\leq d}$. By Lemma \ref{lem_fingen2} the right module
$Z^{\leq d}$ is a finitely generated (right) $R[T]$-module. Since $R$ is
Noetherian, the (commutative) polynomial ring $R[T]$ is Noetherian. It follows
that $Z^{\leq d}$ is a Noetherian right $R[T]$-module and thus its
$R[T]$-submodule $I^{\leq d}=I\cap Z^{\leq d}\subseteq Z^{\leq d}$ is also a
finitely generated right $R[T]$-module. We also have $I=\bigcup_{d}I^{\leq d}
$, because every element in $I$ has the form $r_{0,0}+\cdots+r_{d,d}F^{d}%
T^{d}$ for some sufficiently large $d$, so it will lie in $Z^{\leq d}$ and $I$ simultaneously.
\end{proof}

\begin{lemma}
[Emerton's Key Lemma, right analogue]\label{lem_rightemertonkeylemma}Suppose

\begin{itemize}
\item $R$ is a Noetherian commutative $\mathbf{F}_{q}$-algebra,

\item $I$ is a right ideal in $R[F][T]$,

\item $R$ is $F$-finite.
\end{itemize}

Then $I$ is a finitely generated right ideal in $R[F][T]$ if and only if the
right $R[T]$-module $\operatorname*{coker}(\widetilde{I}\overset{\cdot
F}{\longrightarrow}I)$ is finitely generated.
\end{lemma}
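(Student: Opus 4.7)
The plan is to prove both implications using the filtration $I^{\leq d}=I\cap Z^{\leq d}$ from Lemma \ref{lem_fingen3}, whose pieces are finitely generated right $R[T]$-modules. A key technical input I will rely on throughout is that every $h\in R[F][T]$ admits a unique left-coefficient expansion $h=\sum_{j} P_{j}(T)F^{j}$ with $P_{j}(T)\in R[T]$: this is legitimate because $T$ commutes with $F$ by line \ref{lah1}, so that $F^{i}T^{k}=T^{k}F^{i}$ and the grouping makes sense. Moreover, right multiplication of any element of $R[F][T]$ by an element of $R[T]$ preserves the $F$-degree, since $R[T]$ sits inside $Z^{\leq 0}$ and right multiplication by elements of $R$ only rescales the $R$-coefficients via the $q$-th power.

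For the forward direction, suppose $g_{1},\dots,g_{n}\in I$ generate $I$ as a right $R[F][T]$-ideal, and pick $d_{0}$ with all $g_{i}\in I^{\leq d_{0}}$. Given $x=\sum_{i}g_{i}h_{i}\in I$, expand $h_{i}=\sum_{j}P_{i,j}(T)F^{j}$; then $x=\sum_{i,j}(g_{i}P_{i,j}(T))F^{j}\equiv\sum_{i}g_{i}P_{i,0}(T)\pmod{I\cdot F}$, and this residue lies in $I^{\leq d_{0}}$ because right multiplication by $R[T]$ preserves the $F$-degree. Therefore the composite $I^{\leq d_{0}}\hookrightarrow I\twoheadrightarrow\operatorname{coker}(\widetilde{I}\xrightarrow{\cdot F}I)$ is surjective, and since $I^{\leq d_{0}}$ is a finitely generated right $R[T]$-module by Lemma \ref{lem_fingen3}, so is the cokernel.

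For the converse, I would lift a finite set of right $R[T]$-generators of the cokernel to elements $g_{1},\dots,g_{n}\in I$ and choose $d_{0}$ with all $g_{i}\in I^{\leq d_{0}}$. Applying Lemma \ref{lem_fingen3} again, pick right $R[T]$-generators $h_{1},\dots,h_{m}$ of the entire module $I^{\leq d_{0}}$. The claim is that these $h_{j}$ already generate $I$ as a right ideal in $R[F][T]$. Observe first that since each $g_{i}$ is an $R[T]$-combination of the $h_{j}$, the images $\bar{h}_{j}$ still generate the cokernel. Now argue by strong induction on the $F$-degree $D$ of an arbitrary $x\in I$: if $D\leq d_{0}$, then $x\in I^{\leq d_{0}}$ is an $R[T]$-combination of the $h_{j}$ and the claim is immediate. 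If $D>d_{0}$, use the cokernel generation to write $x=\sum_{j}h_{j}r_{j}+yF$ with $r_{j}\in R[T]$ and $y\in I$; the sum $\sum h_{j}r_{j}$ lies in $I^{\leq d_{0}}$, so $\deg_{F}(yF)\leq D$ and hence $\deg_{F}y\leq D-1$, allowing the induction hypothesis to be applied to $y$.

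The main obstacle is ensuring the iterative reduction in the reverse direction actually terminates. Taking only lifts of cokernel generators does \emph{not} suffice: the descent step then only guarantees $\deg_{F}(y)\leq \max(D,d_{0})-1$, which gets permanently stuck at the level $d_{0}$ without ever reaching a true base case. Enlarging the set of generators to include full $R[T]$-generators of $I^{\leq d_{0}}$ is what forces the induction to bottom out, and this enlargement is affordable precisely because the Noetherian and $F$-finite hypotheses, via Lemma \ref{lem_fingen3}, give $I^{\leq d_{0}}$ as a finitely generated right $R[T]$-module in the first place.
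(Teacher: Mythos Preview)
Your argument is correct and follows essentially the same route as the paper: find a $d_{0}$ with $I^{\leq d_{0}}\twoheadrightarrow I/IF$, then reduce the $F$-degree inductively to show $I^{\leq d_{0}}$ generates $I$. The paper packages the forward direction via the snake lemma (yielding $R[T]^{n}\twoheadrightarrow I/IF$) and in the reverse direction locates $d_{0}$ by the ascending chain condition and isolates the identity $IF\cap I^{\leq d}=I^{\leq d-1}F$, but these are cosmetic variants of the same degree-reduction you carry out directly.
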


\begin{proof}
\textit{(1)} Suppose $I$ is a finitely generated right ideal in $R[F][T]$.
Then there exists some $n\geq0$ such that the diagram of right $R[T]$-modules%
\[%
\begin{array}
[c]{rccccclc}%
0\rightarrow & \widetilde{K} & \rightarrow & \widetilde{R[F][T]^{n}} &
\rightarrow & \widetilde{I} & \rightarrow0 & \\
& \downarrow &  & \downarrow &  & \downarrow &  & \cdot F\\
0\rightarrow & K & \rightarrow & R[F][T]^{n} & \rightarrow & I &
\rightarrow0 &
\end{array}
\]
commutes. Recall that the right action twist means that $\tilde{m}%
\underset{\widetilde{M}}{\cdot}sT^{j}:=\tilde{m}\underset{M}{\cdot}s^{q}T^{j}$
in the top row. The snake lemma gives us a surjection%
\[
(R[F][T]/R[F][T]F)^{n}\twoheadrightarrow\operatorname*{coker}(\widetilde
{I}\overset{\cdot F}{\rightarrow}I)
\]
(\footnote{Note that the image of $\widetilde{R[F][T]}$ under right
multiplication by $F $ is $R[F][T]F$, and not $\widetilde{R[F][T]}F$.}). There
is an obvious right $R[T]$-module isomorphism\footnote{Actually this is even a
right $R[F][T]$-module isomorphism, where $F$ acts as $r\cdot F=0$ for all
$r\in R[T]$.}%
\begin{align}
R[F][T]/R[F][T]F  & \longrightarrow R[T]\label{lah2}\\
\sum r_{i,\ell}F^{i}T^{\ell}  & \longmapsto\sum r_{0,\ell}T^{\ell}%
\text{.}\nonumber
\end{align}
Note that for $s\in R$, we get%
\[
r_{i,\ell}F^{i}T^{\ell}\cdot s=r_{i,\ell}s^{q^{i}}F^{i}T^{\ell}\longmapsto
r_{0,\ell}sT^{\ell}=r_{0,\ell}T^{\ell}\cdot s\text{.}%
\]
We conclude that the cokernel is a finitely generated right $R[T]$%
-module.\newline\textit{(2)} Suppose $\operatorname*{coker}(\widetilde
{I}\overset{\cdot F}{\rightarrow}I)$ is a finitely generated right
$R[T]$-module. We consider the maps of right $R[T]$-modules%
\begin{equation}
I^{\leq d}\longrightarrow\operatorname*{coker}(\widetilde{I}\overset{\cdot
F}{\rightarrow}I)\text{.}\label{lttt1}%
\end{equation}
The images, along $d\rightarrow+\infty$, form an ascending chain of right
$R[T]$-submodules. As we had assumed that the right-hand side module was right
finitely generated over $R[T]$ and thus Noetherian (since $R$ and thus $R[T]$
are Noetherian), this chain must become stationary. On the other hand, taking
the union over all $d$, we get the entire image of $I$, but $I$ surjects onto
the cokernel. Thus, we learn that there exists some $d_{0}$ such that the map
in line \ref{lttt1} is surjective for all $d\geq d_{0}$. Next, we claim that%
\begin{equation}
I^{\leq d}\subseteq I^{\leq d_{0}}+IF\text{.}\label{lttt2}%
\end{equation}
This is clear: By line \ref{lttt1} every element in $I/IF$ comes from $I^{\leq
d_{0}}$, so in $I$ it lies in $I^{\leq d_{0}}+IF$. Moreover, we claim that%
\begin{equation}
IF\cap I^{\leq d}=I^{\leq d-1}F\text{.}\label{lttt3}%
\end{equation}
(Proof: Right to left: Every element in $I^{\leq d-1}F$ clearly lies in $IF$,
as well as $I^{\leq d}$. For the converse direction: Suppose $\alpha\in IF\cap
I^{\leq d}$. Then we have a presentation%
\[
\alpha=\sum_{\ell}\sum_{i=0}^{d}r_{i,\ell}F^{i}T^{\ell}=\beta F\qquad
\text{with}\qquad\beta=\sum_{\ell}\sum_{i=0}^{\infty}b_{i,\ell}F^{i}T^{\ell
}\in I\text{.}%
\]
Thus, by the uniqueness of presentations as left polynomials in $F$, we get%
\[
\sum_{\ell}\sum_{i=0}^{d}r_{i,\ell}F^{i}T^{\ell}=\sum_{\ell}\sum_{i=0}%
^{\infty}b_{i,\ell}F^{i+1}T^{\ell}%
\]
and conclude that $b_{i,\ell}=0$ for all $i\geq d$. So%
\[
\alpha=\left(  \sum_{\ell}\sum_{i=0}^{d-1}b_{i,\ell}F^{i}T^{\ell}\right)  F\in
I^{\leq d-1}F\text{.}%
\]
This finishes the proof of the sub-claim.) Line \ref{lttt2} reads%
\[
I^{\leq d}\subseteq I^{\leq d_{0}}+IF\text{.}%
\]
\textit{(Inductive reduction process)} For $\alpha\in I^{\leq d}$ (with
$d>d_{0}$), we find some $\gamma\in I^{\leq d_{0}}$ such that%
\[
\alpha=\underset{\in IF}{(\alpha-\gamma)}+\underset{\in I^{\leq d_{0}}}%
{\gamma}\text{.}%
\]
Moving $\gamma$ to the other side of the equation, the left hand side lies in
$I^{\leq d}$, while the right-hand side lies in $IF$. Thus, both lie in the
intersection of both ideals. We deduce $\alpha-\gamma\in IF\cap I^{\leq
d}=I^{\leq d-1}F$ (by Equation \ref{lttt3}). In other words, we can promote
the above equation to%
\begin{equation}
\alpha=\underset{\in I^{\leq d-1}F}{(\alpha-\gamma)}+\underset{\in I^{\leq
d_{0}}}{\gamma}=\underset{\in I^{\leq d-1}}{\hat{\alpha}}\cdot F+\underset{\in
I^{\leq d_{0}}}{\gamma}\label{lm1}%
\end{equation}
for a suitably chosen $\hat{\alpha}\in I^{\leq d-1}$. Now repeat this
reduction process for the element $\hat{\alpha}$.\newline This is possible
unless we reach $d-1=d_{0}$. In this case, the above equation literally says
$\alpha\in I^{\leq d_{0}}\cdot R[F]$. Now, it follows that in Equation
\ref{lm1} we always get $\alpha\in I^{\leq d_{0}}\cdot R[F]$, for every
$\alpha$ in this inductive reduction process.\newline In particular, we
conclude $\alpha\in I^{\leq d_{0}}\cdot R[F]$ for our original $\alpha$ that
started the reduction process. As this works for all elements $\alpha\in
I^{\leq d}$, we obtain $I^{\leq d}\subseteq I^{\leq d_{0}}\cdot R[F]$. Taking
the union over all $d$, we obtain $I\subseteq I^{\leq d_{0}}\cdot R[F]$. But
we also have $I^{\leq d_{0}}\cdot R[F]\subseteq I$, so $I=I^{\leq d_{0}}\cdot
R[F]$ and since $I^{\leq d_{0}}$ is a finitely generated right $R[T]$-module
(by Lemma \ref{lem_fingen3}), it follows that $I^{\leq d_{0}}=\left\langle
a_{1},\ldots,a_{r}\right\rangle R[T]$ and thus $I=\left\langle a_{1}%
,\ldots,a_{r}\right\rangle R[T]\cdot R[F]$. Since $R[T]\cdot R[F]\subseteq
R[F][T]$, it follows that $I$ is a finitely generated $R[F][T]$-module.
\end{proof}

\begin{lemma}
\label{lem_finpres}Suppose

\begin{itemize}
\item $R$ is a Noetherian commutative $\mathbf{F}_{q}$-algebra,

\item $I$ is a right ideal in $R[F][T]$,

\item $R$ is $F$-finite.
\end{itemize}

A finitely generated right $R[F][T]$-module $M$ is finitely presented iff the
right $R[T]$-module $\ker(\widetilde{M}\overset{\cdot F}{\longrightarrow}M)$
is finitely generated.
\end{lemma}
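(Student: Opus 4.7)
My plan is to apply the snake lemma to a diagram comparing $M$ with its tilde-twist via right multiplication by $F$. Since $M$ is finitely generated, I choose a surjection $R[F][T]^{n} \twoheadrightarrow M$ and denote its kernel by $K$. Twisting the resulting short exact sequence and stacking against the original row via $\cdot F$ gives a commutative diagram of right $R[T]$-modules
\[
\begin{array}{ccccccccc}
0 & \to & \widetilde{K} & \to & \widetilde{R[F][T]^{n}} & \to & \widetilde{M} & \to & 0 \\
  &     & \downarrow\,\cdot F &     & \downarrow\,\cdot F &     & \downarrow\,\cdot F &     & \\
0 & \to & K & \to & R[F][T]^{n} & \to & M & \to & 0.
\end{array}
\]
The middle vertical map is injective (right multiplication by $F$ simply shifts the $F$-degree up by one, using the uniqueness of left-$F$-polynomial expansions) and its cokernel is canonically $R[T]^{n}$ as a right $R[T]$-module, by Equation \ref{lah2} applied coordinate-wise. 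The snake lemma therefore produces the four-term exact sequence of right $R[T]$-modules
\[
0 \to \ker(\widetilde{M} \xrightarrow{\cdot F} M) \to \operatorname{coker}(\widetilde{K} \xrightarrow{\cdot F} K) \to R[T]^{n} \to \operatorname{coker}(\widetilde{M} \xrightarrow{\cdot F} M) \to 0.
\]

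For the ``only if'' direction, I will assume $M$ is finitely presented, and arrange that $K$ itself is finitely generated as a right $R[F][T]$-module. Running the same snake machinery against a finite surjection $R[F][T]^{r} \twoheadrightarrow K$ yields a surjection $R[T]^{r} \twoheadrightarrow \operatorname{coker}(\widetilde{K} \to K)$, so this cokernel is finitely generated over $R[T]$. Since $R$ and hence $R[T]$ is Noetherian, the submodule $\ker(\widetilde{M} \to M)$ extracted from the four-term sequence is automatically finitely generated over $R[T]$.

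For the ``if'' direction, I assume $\ker(\widetilde{M} \to M)$ is finitely generated over $R[T]$. The same four-term sequence realises $\operatorname{coker}(\widetilde{K} \to K)$ as an extension of a submodule of the Noetherian module $R[T]^{n}$ by $\ker(\widetilde{M} \to M)$, and hence it too is finitely generated over $R[T]$. It remains to promote this to finite generation of $K$ as a right $R[F][T]$-module; this is the module-analogue of part (2) of Lemma \ref{lem_rightemertonkeylemma}, and I will mimic that proof with $Z^{\leq d}$ replaced by the coordinate-wise version $(Z^{\leq d})^{n}$ and $I^{\leq d}$ by $K^{\leq d} := K \cap (Z^{\leq d})^{n}$. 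Lemma \ref{lem_fingen2} together with Noetherianness of $R[T]$ makes each $K^{\leq d}$ finitely generated over $R[T]$; the ascending chain $K = \bigcup_{d} K^{\leq d}$ must eventually surject onto the Noetherian target $\operatorname{coker}(\widetilde{K} \to K)$; and the inductive reduction modulo $F$ depends only on the uniqueness of left-$F$-polynomial representations, which persists coordinate-wise in $R[F][T]^{n}$. The upshot is $K = K^{\leq d_{0}} \cdot R[F]$, finitely generated as a right $R[F][T]$-module, which exhibits $M$ as finitely presented.

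The main obstacle is the last step: transporting Emerton's key-lemma technique from right ideals in $R[F][T]$ to right submodules of the free module $R[F][T]^{n}$. I expect this to be surmountable by a purely notational adaptation, since the combinatorial core---the $F$-degree filtration, the unique left-$F$-polynomial expansion, and the inductive reduction modulo $F$---extends without modification to finitely many copies of $R[F][T]$.
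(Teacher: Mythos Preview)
Your proposal is correct and follows essentially the same route as the paper: set up the snake-lemma diagram from a presentation of $M$, extract the four-term exact sequence, and reduce the finite-presentation question for $M$ to finite generation of $\operatorname{coker}(\widetilde{K}\xrightarrow{\cdot F}K)$ via Noetherianness of $R[T]$, then invoke the module-version of Emerton's key lemma. The only difference is organizational---the paper handles both implications in one stroke via the equivalence chain and simply remarks parenthetically that Lemma~\ref{lem_rightemertonkeylemma} extends from right ideals to submodules of free modules, whereas you treat the two directions separately and spell out the free-module adaptation explicitly.
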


\begin{proof}
As $M$ is a finitely generated right $R[F][T]$-module, we get a commutative
diagram%
\begin{equation}%
\begin{array}
[c]{rccccclc}%
0\rightarrow & \widetilde{K} & \rightarrow & \widetilde{R[F][T]}^{n} &
\rightarrow & \widetilde{M} & \rightarrow0 & \\
& \downarrow &  & \downarrow &  & \downarrow &  & \cdot F\\
0\rightarrow & K & \rightarrow & R[F][T]^{n} & \rightarrow & M &
\rightarrow0 &
\end{array}
\label{lah3}%
\end{equation}
and the $K$ on the left is just defined as the kernel. Therefore, by the snake
lemma%
\[
0\rightarrow\ker(\widetilde{M}\overset{\cdot F}{\rightarrow}M)\rightarrow
\operatorname*{coker}(\widetilde{K}\overset{\cdot F}{\rightarrow}K)\rightarrow
R[T]^{n}\rightarrow\operatorname*{coker}(\widetilde{M}\overset{\cdot
F}{\rightarrow}M)\rightarrow0
\]
is an exact sequence of right $R[F][T]$-modules (for the middle cokernel we
have used the isomorphism of line \ref{lah2}, the right action of $F$ on
$R[T]$ is tacitly understood to be the zero map). In other words, we get an
exact sequence of right $R[T]$-modules%
\[
0\rightarrow\ker(\widetilde{M}\overset{\cdot F}{\rightarrow}M)\rightarrow
\operatorname*{coker}(\widetilde{K}\overset{\cdot F}{\rightarrow}%
K)\rightarrow\ker\left(  R[T]^{n}\rightarrow\operatorname*{coker}%
(\widetilde{M}\overset{\cdot F}{\rightarrow}M)\right)  \rightarrow0\text{.}%
\]
As $R[T]^{n}$ is finitely generated and ($R$ and therefore) $R[T]$ is
Noetherian, it follows that the kernel on the right, is also right finitely
generated over $R[T]$. It follows that $\ker(\widetilde{M}\overset{\cdot
F}{\rightarrow}M)$ is finitely generated over $R[T]$ if and only if
$\operatorname*{coker}(\widetilde{K}\overset{\cdot F}{\rightarrow}K)$ is
finitely generated over $R[T]$. However, $K$ is a right $R[F][T]$-submodule of
$R[F][T]^{n}$, so by Lemma \ref{lem_rightemertonkeylemma} it is a finitely
generated right $R[F][T]$-module iff $\operatorname*{coker}(\widetilde
{K}\overset{\cdot F}{\rightarrow}K)$ is a finitely generated right
$R[T]$-module (the Lemma is about right ideals, but this is easily seen to be
equivalent to hold for all right submodules of free right modules). However,
by Diagram \ref{lah3}, $K$ being right finitely generated over $R[F][T]$, is
equivalent to $M$ being finitely presented as a right $R[F][T]$-module.
\end{proof}

The following is the analogue of Emerton's theorem, Theorem
\ref{marker_EmertonLeftCoherence}, just right instead of left, and stabilized
along additional polynomial variables.

\begin{proposition}
\label{prop_RightSuperCoherence}Suppose

\begin{itemize}
\item $R$ is a Noetherian commutative $\mathbf{F}_{q}$-algebra, and

\item $R$ is $F$-finite.
\end{itemize}

Then the ring $R[F]$ is right stably coherent.
\end{proposition}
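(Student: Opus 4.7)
The plan is to reduce the statement to a direct application of Lemma \ref{lem_finpres}, invoking the multi-variable form noted in Remark \ref{rmk_MultivariableRing}. By the definition of right stable coherence, it suffices to show that for every integer $m \geq 0$ the ring $A := R[F][T_1, \ldots, T_m]$ is right coherent, and by Proposition \ref{prop_CharacterizationsOfRightCoherentRings} this is in turn equivalent to the assertion that every finitely generated right ideal $I \subseteq A$ is finitely presented.

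So I would fix such a finitely generated right ideal $I$ and apply Lemma \ref{lem_finpres} with $M := I$: the lemma reduces finite presentation of $I$ to showing that the kernel $\ker(\widetilde{I} \overset{\cdot F}{\to} I)$ is finitely generated as a right $R[T_1, \ldots, T_m]$-module. The key observation that makes this step immediate is that right multiplication by $F$ is already injective on the ambient ring $A$. Indeed, every element of $A$ has a unique presentation as a left polynomial $\sum r_{i,\underline{\ell}} F^i T^{\underline{\ell}}$, and the relations $T_j F = F T_j$ of Equation \ref{lah1} send such an element under $\cdot F$ to $\sum r_{i,\underline{\ell}} F^{i+1} T^{\underline{\ell}}$, which vanishes iff every $r_{i,\underline{\ell}}=0$. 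Restricting to the submodule $I \subseteq A$, the kernel is therefore zero and \emph{a fortiori} finitely generated; Lemma \ref{lem_finpres} then yields that $I$ is finitely presented, completing the argument.

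I do not expect a genuine obstacle at this stage: the real work has already been absorbed into Lemmas \ref{lem_rightemertonkeylemma} and \ref{lem_finpres}, and the present proposition is essentially a one-line corollary. It is worth noting the underlying asymmetry driving the argument: the cokernel criterion of Lemma \ref{lem_rightemertonkeylemma} controls finite generation of right ideals and is genuinely non-trivial, whereas the kernel criterion of Lemma \ref{lem_finpres} controls finite presentation and becomes automatic here thanks to the injectivity of $\cdot F$ on $A$.
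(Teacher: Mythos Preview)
Your argument for finitely many variables is correct and matches the paper's Step~1 almost verbatim: Lemma~\ref{lem_finpres} reduces finite presentation of a finitely generated right ideal $I \subseteq R[F][T_1,\ldots,T_m]$ to finite generation of $\ker(\widetilde{I}\overset{\cdot F}{\to}I)$, and this kernel vanishes because right multiplication by $F$ is injective on the ambient ring.

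There is, however, a gap in your opening reduction. The definition of right stable coherence asks that $R[F][X_i]_{i\in I}$ be right coherent for \emph{every} index set $I$, not just finite ones, and your sentence ``it suffices to show that for every integer $m\geq 0$\ldots'' is not justified by the definition alone. The paper treats this in a separate Step~2: it writes
\[
R[F][X_i]_{i\in I}=\underset{I'\subseteq I\text{ finite}}{\underrightarrow{\operatorname*{colim}}}\,R[F][X_i]_{i\in I'}
\]
with (faithfully) flat transition maps, and then invokes the result \cite[\S 5, Prop.~20]{MR0260799} that a flat filtered colimit of right coherent rings is again right coherent. One could also argue directly that any finitely generated right ideal of $R[F][X_i]_{i\in I}$ is extended from a finite-variable subring along a flat map, and that extension of scalars preserves finite presentation; either way, some argument for the passage to infinite $I$ is required.
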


\begin{proof}
\textit{(Step 1)} Firstly, we just prove that $R[F][T]$ is right coherent. Let
$I$ be a finitely generated right ideal in $R[F][T]$. Then by the snake lemma%
\[
I\hookrightarrow R[F][T]
\]
induces $\ker(\widetilde{I}\overset{\cdot F}{\rightarrow}I)\hookrightarrow
\ker(\widetilde{R[F][T]}\overset{\cdot F}{\rightarrow}R[F][T])=0$, so the
kernel is zero and thus certainly right finitely generated over $R[F][T]$. By
Lemma \ref{lem_finpres} it follows that $I$ is finitely presented. Thus, every
finitely generated right ideal is even finitely presented, i.e. $R[F][T]$ is
right coherent.\newline\textit{(Step 2)} We prove right stable coherence by
reducing to finitely many variables (this is inspired by the proof of
\cite[Thm. 1.8]{MR0396671}, who does the same for non-commuting variables):
Following Remark \ref{rmk_MultivariableRing}, Step 1 actually shows that
$R[F][T_{i}]_{i\in I}$ is right coherent for any \textit{finite} set $I$. Now,
for an arbitrary set $I$ we have the filtering colimit presentation%
\[
R[F][T_{i}]_{i\in I}=\underset{I^{\prime}\subseteq I\text{, }I^{\prime}\text{
finite}}{\underrightarrow{\operatorname*{colim}}}R[F][T_{i}]_{i\in I^{\prime}%
}\text{.}%
\]
For an arbitrary ring $A$, every non-zero free right $A$-module is faithfully
flat over $A$. Since the polynomial rings $A[T_{i}]_{i\in I}$ are right flat
over $A$ (unlike, in general, their Frobenius counterpart $A[F]$), it follows
that the above colimit has all transition morphisms (faithfully) flat. By
\cite[\S 5, Prop. 20]{MR0260799} a flat colimit of right coherent rings is
again right coherent. This shows that $R[F][T_{i}]_{i\in I}$ is right
coherent, regardless the choice of $I$.
\end{proof}

\subsection{Right regularity}

The following result is due to Linquan Ma \cite[Theorem 3.2]{MR3160413}:

\begin{proposition}
[Ma]\label{prop_GlobalRightDimOfFrobeniusSkewRing}Suppose that $R$ is a
commutative $\mathbf{F}_{q}$-algebra of finite global dimension. Then%
\[
\operatorname*{gldim}\nolimits_{\operatorname*{right}}R[F][T_{1},\ldots
,T_{s}]\leq\operatorname*{gldim}\nolimits_{\operatorname*{right}}R+s+1\text{.}%
\]

\end{proposition}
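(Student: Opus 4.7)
The plan is to split the bound into two pieces: first establish $\operatorname*{gldim}_{\operatorname{right}} R[F] \leq \operatorname*{gldim}_{\operatorname{right}} R + 1$, and then add $s$ via Hilbert's syzygy theorem applied iteratively to the commuting polynomial extensions. Since $R[F][T_1,\ldots,T_s] = (R[F])[T_1,\ldots,T_s]$ and for any ring $A$ one has $\operatorname*{gldim}_{\operatorname{right}} A[T] \leq \operatorname*{gldim}_{\operatorname{right}} A + 1$, the second piece is standard and only requires that the $T_i$ genuinely commute with all of $R[F]$, which is precisely how $R[F][T]$ is defined (cf.\ line \ref{lah1}).

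For the first piece, the plan is to build, for every right $R[F]$-module $M$, a two-term resolution by induced modules. Concretely, let $M\otimes_{R} R[F]$ denote the induced right $R[F]$-module, where the tensor product uses the right $R$-action on $M$ and the left $R$-action on $R[F]$. I would define a sequence
\[
0\longrightarrow \widetilde{M}\otimes_{R} R[F] \xrightarrow{\;\psi\;} M\otimes_{R} R[F] \xrightarrow{\;\phi\;} M \longrightarrow 0,
\]
where $\phi(m\otimes \alpha)=m\alpha$ and $\psi(\widetilde{m}\otimes \alpha)=m\otimes F\alpha - mF\otimes \alpha$. Well-definedness of $\psi$ over $R$ uses the defining relation $Fr = r^{q}F$ together with the twist in $\widetilde{M}$: both $\psi(\widetilde{m}\cdot r\otimes \alpha)$ and $\psi(\widetilde{m}\otimes r\alpha)$ reduce to $mr^{q}\otimes F\alpha - mr^{q}F\otimes \alpha$. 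Right $R[F]$-linearity of $\psi$ is immediate since $R[F]$ acts only on the second tensor factor, and $\phi\circ\psi=0$ is a short check. Exactness in the middle and injectivity of $\psi$ I would verify using the filtration of $R[F]$ by $F$-degree: writing any element of $\widetilde{M}\otimes_{R} R[F]$ uniquely as $\sum_{i} n_{i}\otimes F^{i}$ (using that $\{F^i\}$ is a left $R$-basis of $R[F]$), the image $\psi(\sum n_{i}\otimes F^{i})$ has leading term $n_{d}\otimes F^{d+1}$, so $\psi$ is injective, and a symbol argument along the same filtration closes exactness at the middle.

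With this resolution in hand, note that $R[F]$ is free, hence flat, as a left $R$-module, so the induction functor $(-)\otimes_{R} R[F]$ is exact and carries right $R$-projectives to right $R[F]$-projectives. Therefore, if $M$ is taken with its underlying right $R$-module structure and has projective dimension $\leq d$ over $R$, then $M\otimes_{R} R[F]$ and $\widetilde{M}\otimes_{R} R[F]$ have projective dimension $\leq d$ over $R[F]$, and splicing with the two-term resolution gives $\operatorname*{pd}_{R[F]} M \leq d+1$. Taking the supremum over all $M$ and using $d \leq \operatorname*{gldim}_{\operatorname{right}} R$ yields the first piece, and combining with Hilbert's syzygy theorem gives the stated bound.

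The main obstacle I anticipate is verifying the middle exactness and injectivity of $\psi$ cleanly in the right-module setting; the twist $\widetilde{M}$ is essential and the argument is less symmetric than on the left, because right multiplication by $r$ does not pass through $F$ in the naive way. The $F$-filtration trick handles this, but one has to be careful that the associated graded of $M\otimes_{R} R[F]$ identifies with $\bigoplus_{i\geq 0} M\cdot F^{i}$ as a right $R$-module only after introducing the twist, which is exactly what $\widetilde{M}$ encodes.
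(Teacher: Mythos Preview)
Your proposal is correct and follows essentially the same route as the paper's proof. The paper introduces the notation $N[X]:=N\otimes_{R}R[F]$ and packages your short exact sequence as a separate lemma (the ``Frobenius-twisted Koszul complex'', Lemma~\ref{lemma_FrobeniusTwistedKoszulComplex}), with $\phi(nX^{i})=nF^{i}$ and $\psi(nX^{i})=nX^{i+1}-nFX^{i}$, which is exactly your $\psi(\widetilde{m}\otimes\alpha)=m\otimes F\alpha-mF\otimes\alpha$ specialized to $\alpha=F^{i}$; exactness is checked by the same leading-term argument along the $F$-degree filtration, and the conclusion via induction of projectives plus Hilbert's syzygy theorem is identical.
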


Ma's paper \cite{MR3160413} additionally assumes that $R$ is $F$-finite and
obtains an equality. However, for our purposes the stated estimate suffices.
As we want to cite an ingredient of the proof in the $K$-theory computations
later on, we give his proof adapted to our own notation:

Let $M$ be a right $R$-module. Then we can define a right $R[F]$-module%
\begin{equation}
M[X]:=M\otimes_{R}R[F]\text{.}\label{laaa1}%
\end{equation}
This can be spelled out as follows:
\[
M[X]=\left\{  \left.  \sum m_{i}X^{i}\right\vert m_{i}\in M\text{, }%
m_{i}=0\text{ for all but finitely many }i\right\}
\]%
\[
(m_{i}X^{i})\cdot rF^{j}:=m_{i}r^{q^{i}}X^{i+j}=(m_{i}\underset{M}{\cdot
}r^{q^{i}})X^{i+j}\text{.}%
\]
Now, suppose $N$ is an arbitrary right $R[F]$-module. Then, we can also
interpret it as a right $R$-module so that $N[X]$ makes sense. We define
$\phi:N[X]\rightarrow N$, $nX^{i}\mapsto nF^{i}$. This is a right
$R[F]$-module homomorphism. Since $nX^{0}\mapsto n$, for all $n\in N$, it is
surjective. Next, for every right $R[F]$-module $N$, we define a new right
$R[F]$-module $\tilde{N}$. Its additive group is just the one of $N$, but we
change the right action to $(\tilde{n})\underset{\tilde{N}}{\cdot}%
rF^{j}:=\tilde{n}\underset{N}{\cdot}r^{q}F^{j}$, where we refer to the
original right $R[F]$-module structure of $N$ on the right hand side. One
checks that this is indeed a right $R[F]$-module structure. Next, we define a
map $\psi:\tilde{N}[X]\rightarrow N[X]$, $nX^{i}\mapsto nX^{i+1}-nFX^{i}$. We
claim that this is a morphism of right $R[F]$-modules. This is a little
delicate because of the different module structures. We compute%
\[
(nX^{i})\underset{\tilde{N}[X]}{\cdot}rF^{j}=(n\underset{\tilde{N}}{\cdot
}r^{q^{i}})X^{i+j}=(n\underset{N}{\cdot}r^{q^{i+1}})X^{i+j}\text{,}%
\]
where on the right-hand side we just have the ordinary right $R[F]$-module
structure, so we could also just write $nr^{q^{i+1}}X^{i+j}$. Now,
$\psi(nr^{q^{i+1}}X^{i+j})=nr^{q^{i+1}}X^{i+j+1}-nr^{q^{i+1}}FX^{i+j}$. On the
other hand, $\psi(nX^{i})\underset{N[X]}{\cdot}rF^{j}=(nX^{i+1}-nFX^{i})\cdot
rF^{j}=nr^{q^{i+1}}X^{i+j+1}-nr^{q^{i+1}}FX^{i+j}$. Thus, it is indeed a right
$R[F]$-module homomorphism. This yields a `Frobenius-twisted Koszul complex':

\begin{lemma}
\label{lemma_FrobeniusTwistedKoszulComplex}For every right $R[F]$-module $N$,%
\begin{equation}
0\longrightarrow\tilde{N}[X]\overset{\psi}{\longrightarrow}N[X]\overset{\phi
}{\longrightarrow}N\longrightarrow0\label{laa1}%
\end{equation}
is a short exact sequence of right $R[F]$-modules.
\end{lemma}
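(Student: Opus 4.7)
The plan is to verify the three exactness statements in turn. The author has already checked that both $\phi$ and $\psi$ are right $R[F]$-linear, so the outstanding tasks are: surjectivity of $\phi$, injectivity of $\psi$, and the middle exactness $\ker\phi=\operatorname{im}\psi$. Surjectivity of $\phi$ is immediate from $\phi(nX^{0})=n$, as the author already noted in the construction.

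For injectivity of $\psi$, I would use a top-degree argument. Given $\beta=\sum_{i=0}^{d}m_{i}X^{i}\in\tilde{N}[X]$ with $m_{d}\neq 0$, expanding
\[
\psi(\beta)=\sum_{i=0}^{d}m_{i}X^{i+1}-\sum_{i=0}^{d}m_{i}FX^{i}
\]
shows that the coefficient of $X^{d+1}$ in $\psi(\beta)$ is precisely $m_{d}$, which is nonzero. Hence $\psi(\beta)=0$ forces $\beta=0$.

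Middle exactness splits into two inclusions. The direction $\operatorname{im}\psi\subseteq\ker\phi$ is checked on generators:
\[
\phi\psi(nX^{i})=\phi(nX^{i+1}-nFX^{i})=nF^{i+1}-(nF)F^{i}=0.
\]
The reverse inclusion is the main step. Given $\alpha=\sum_{i=0}^{d}n_{i}X^{i}$ with $\phi(\alpha)=\sum_{i=0}^{d}n_{i}F^{i}=0$ in $N$, I would construct a preimage $\beta=\sum_{j=0}^{d-1}m_{j}X^{j}\in\tilde{N}[X]$ by downward recursion: set $m_{d-1}:=n_{d}$ and $m_{j-1}:=n_{j}+m_{j}F$ for $j=d-1,\ldots,1$. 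Solving the recursion explicitly yields $m_{0}=n_{1}+n_{2}F+\cdots+n_{d}F^{d-1}$, so
\[
m_{0}F=\sum_{i=1}^{d}n_{i}F^{i}=-n_{0},
\]
the last equality being precisely the hypothesis $\phi(\alpha)=0$. A direct telescoping check comparing coefficients of each $X^{k}$ then shows $\psi(\beta)=\alpha$: the $X^{d}$-coefficient matches because $m_{d-1}=n_{d}$, the interior coefficients match by the recursion, and the $X^{0}$-coefficient matches exactly by the computation $-m_{0}F=n_{0}$ just made.

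The main obstacle is the bookkeeping in this recursion: one has to see that the single scalar relation $\phi(\alpha)=0$ is precisely what is needed to close the recursion at the $X^{0}$ end. Note that this preimage construction takes place on the underlying abelian group, so the twisted action on $\tilde{N}[X]$ plays no additional role here beyond guaranteeing $R[F]$-linearity of $\psi$, which the author has already established.
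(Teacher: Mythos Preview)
Your proof is correct and matches the paper's approach: both use the top-degree coefficient for injectivity of $\psi$, and both construct the preimage for $\ker\phi\subseteq\operatorname{im}\psi$ by a degree reduction. The only cosmetic difference is that the paper phrases the last step as an inductive reduction (subtracting $\psi(n_{m}X^{m-1})$ to lower the degree until reaching the base case $m=1$), whereas you solve the same recursion in closed form and verify the $X^{0}$-coefficient directly.
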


\begin{proof}
We have $\phi\psi(\tilde{n}X^{i})=\phi(nX^{i+1}-nFX^{i})=nF^{i+1}-nF^{i+1}=0$.
We need to study the kernel of $\psi$. Suppose $\alpha=\sum_{i=0}^{m}%
n_{i}X^{i}$ lies in the kernel.%
\[
\psi(\sum_{i=0}^{m}n_{i}X^{i})=\sum_{i=0}^{m}n_{i}X^{i+1}-\sum_{i=0}^{m}%
n_{i}FX^{i}%
\]
and since the largest $X$-degree on the right-hand side, namely $X^{m+1}$, has
coefficient $n_{m}$, we must have $n_{m}=0$ if this element indeed lies in the
kernel of $\psi$. Thus, we could have started this computation with $m-1$
instead of $m$. By induction, it follows that $\alpha=0$. Finally, we need to
show exactness in the middle, i.e. that every element in the kernel of $\phi$
lies in the image of $\psi$. Suppose $\alpha=\sum_{i=0}^{m}n_{i}X^{i}$ lies in
$\ker\phi$ and $m\geq2$. Then%
\[
\alpha-\psi(n_{m}X^{m-1})=\sum_{i=0}^{m}n_{i}X^{i}-(n_{m}X^{m}-n_{m}FX^{m-1})
\]
is has $X$-degree strictly less than $\alpha$, and $\phi$ sends this new
element still to zero (since by assumption it sends $\alpha$ to zero, and we
already know that $\phi\psi$ is the zero map). Thus, by induction, we can
split off pre-images from $\alpha$ unless $m=1$. So let us restrict to this
case and say $\alpha=n_{1}X+n_{0}$. We find $\phi(\alpha)=n_{1}F+n_{0}$, so if
this is zero, we have $n_{0}=-n_{1}F$, i.e. $\alpha=n_{1}X-n_{1}F$.\ Now, we
obviously have $\psi(n_{1})=\alpha$, so this $\alpha$ also has a pre-image.
\end{proof}

\begin{proof}
[Proof of Prop. \ref{prop_GlobalRightDimOfFrobeniusSkewRing}]Suppose $s=0$. By
Equation \ref{laa1} there is a quasi-isomorphism from the $2$-term complex
$\left[  \tilde{N}[X]\rightarrow N[X]\right]  _{-1,0}$ to $N$. Thus, our claim
is proven once we can show that each right $R[F]$-module of the shape $N[X]$
has a projective resolution of length at most $n$. By taking the cone, this
then yields a projective resolution of $N$ of length at most $n+1$. Suppose
$N$ is a finitely generated right $R$-module. Note that if $0\rightarrow
A\rightarrow B\rightarrow C\rightarrow0$ is a short exact sequence, so is
$0\rightarrow A[X]\rightarrow B[X]\rightarrow C[X]\rightarrow0$ (as $R[F]$ is
flat as a left $R$-module, cf. Equation \ref{laaa1}). Moreover, there is an
isomorphism of right $R[F]$-modules $R[X]\rightarrow R[F]$, $rX^{i}\mapsto
rF^{i}$, showing that $R[X]$ is a rank one free $R[F]$-module, and thus if $R$
is a free $R$-module, $R[X]$ is a free $R$-module. If $N$ is a projective
$R$-module, $N\oplus M$ is a free $R$-module for a suitable $M$. Thus,
$N[X]\oplus M[X]$ is free. It follows that $N[X]$ is a projective right
$R[F]$-module. As a result of preserving exact sequences, this means that
every projective resolution of length $n$ of a right $R$-module $N$ induces a
projective resolution of length $n$ of the right $R[F]$-module $N[X]$.
Finally, we use the Hilbert syzygy theorem (\cite[Ch. 2, \S 5, (5.36),
Theorem]{MR1653294}) to deduce%
\[
\operatorname*{gldim}\nolimits_{\operatorname*{right}}R[F][T_{1},\ldots
,T_{s}]=\operatorname*{gldim}\nolimits_{\operatorname*{right}}R[F]+s\text{,}%
\]
proving the claim.
\end{proof}

\begin{lemma}
\label{lem_FinGenOverRMeansFinPresOverRF}Suppose $R$ is a commutative
$F$-finite Noetherian $\mathbf{F}_{q}$-algebra. If $N$ is a right
$R[F]$-module, which is finitely generated as a right $R$-module, then $N[X]$,
$\widetilde{N}[X]$ and $N$ are finitely presented right $R[F]$-modules.
\end{lemma}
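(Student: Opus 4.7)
The plan is to derive all three finite presentation statements from the observation that $N$ is already finitely presented as a right $R$-module, combined with left-flatness of $R[F]$ over $R$ and the Frobenius-twisted Koszul sequence of Lemma \ref{lemma_FrobeniusTwistedKoszulComplex}.

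First I would observe that, since $R$ is Noetherian and $N$ is finitely generated as a right $R$-module, $N$ is in fact finitely presented as a right $R$-module. By Corollary \ref{cor_fingenA}, the same holds for $\widetilde{N}$: it is finitely generated, hence finitely presented, as a right $R$-module. This is where $F$-finiteness enters.

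Next, recall that $R[F]$ is free, and in particular flat, as a left $R$-module. So starting from a finite presentation
\[
R^{m}\longrightarrow R^{n}\longrightarrow N\longrightarrow 0
\]
of right $R$-modules and tensoring on the right with $R[F]$ over $R$ (which, by the definition in line \ref{laaa1}, is exactly the functor $(-)[X]$) yields an exact sequence
\[
R[F]^{m}\longrightarrow R[F]^{n}\longrightarrow N[X]\longrightarrow 0
\]
of right $R[F]$-modules, exhibiting $N[X]$ as finitely presented over $R[F]$. Applying the same argument to $\widetilde{N}$ in place of $N$ gives the same conclusion for $\widetilde{N}[X]$.

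Finally, the short exact sequence
\[
0\longrightarrow \widetilde{N}[X]\overset{\psi}{\longrightarrow} N[X]\overset{\phi}{\longrightarrow} N\longrightarrow 0
\]
from Lemma \ref{lemma_FrobeniusTwistedKoszulComplex} displays $N$ as the quotient of the finitely presented right $R[F]$-module $N[X]$ by the finitely generated submodule $\widetilde{N}[X]$. By the standard two-out-of-three fact (any surjection $R[F]^{n}\twoheadrightarrow N[X]$ with finitely generated kernel composes to a surjection $R[F]^{n}\twoheadrightarrow N$ whose kernel sits in an extension of $\widetilde{N}[X]$ by a finitely generated module, hence is itself finitely generated), $N$ is finitely presented as a right $R[F]$-module, completing the proof. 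There is no real obstacle here: the statement is a bookkeeping consequence of results already established, and the only mildly delicate point is passing from finite presentation of $R$-modules to finite presentation of the associated $R[F]$-modules, which is handled by the flatness of $R[F]$ as a left $R$-module.
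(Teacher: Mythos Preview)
Your proof is correct and follows essentially the same route as the paper: use Noetherianity to get a finite $R$-presentation of $N$, apply $(-)\otimes_R R[F]$ to obtain a finite $R[F]$-presentation of $N[X]$, handle $\widetilde{N}[X]$ analogously, and then invoke the Frobenius-twisted Koszul sequence of Lemma~\ref{lemma_FrobeniusTwistedKoszulComplex} to conclude for $N$. The only cosmetic difference is that you first twist at the $R$-module level (using Corollary~\ref{cor_fingenA} on $\widetilde{N}$) and then apply $(-)[X]$, whereas the paper first proves $N[X]$ is finitely presented over $R[F]$ and then applies the tilde via Lemma~\ref{lem_fingenA}; since $\widetilde{N[X]}=\widetilde{N}[X]$, the two orders are equivalent.
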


\begin{proof}
As $N$ is finitely generated and $R$ Noetherian, $N$ is even finitely
presented, say $0\rightarrow K\rightarrow R^{\oplus m}\rightarrow
M\rightarrow0$ is exact with $K$ finitely generated. Hence, $0\rightarrow
K[X]\rightarrow R[X]^{\oplus m}\rightarrow N[X]\rightarrow0$ is also exact.
Since $R[X]\cong R[F]$ and because a finite set of generators of $K$ as an $R
$-module will induce a finite set of generators of $K[X]$ as a right
$R[F]$-module, it follows that $N[X]$ is a quotient of a free finite rank
$R[F]$-module by a finitely generated one, and thus $N[X]$ is finitely
presented. By Lemma \ref{lem_fingenA} (or Corollary \ref{cor_fingenA}) it
follows that $\widetilde{N}[X]$ is also finitely presented. Sequence
\ref{laa1} now implies that $N$ is the quotient of two finitely presented
modules, and thus itself finitely presented.
\end{proof}

\section{Sheaf-theoretic properties of the Frobenius skew ring}

\subsection{Construction}

It is very important that the Frobenius skew ring $R[F]$ can be turned into a
Zariski sheaf. This is well-known, but since it plays an important r\^{o}le
for us, let us recall some details. Again, Yoshino's paper \cite{MR1271618} is
an excellent reference. Firstly, if $S$ is a multiplicative set in $R$, it is
\textit{not} central in $R[F]$. Thus, it is a priori not even clear whether a
localization at $S$ exists. One needs to check the Ore conditions \cite[Ch. 9,
\S 9.1]{MR1098018}, \cite[\S 1.3]{MR1349108}, \cite[\S 10A]{MR1653294}.

\begin{lemma}
\label{lemma_multSetSInRIsBiOreInRF}Every multiplicative subset $S\subseteq R
$ satisfies the left- and right denominator set axioms as a multiplicative
subset of $R[F]$.
\end{lemma}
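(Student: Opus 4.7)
The plan is to check both Ore conditions together with the two associated reversibility axioms directly. Throughout, I would exploit that every element of $R[F]$ has a unique presentation as a left polynomial $\alpha = \sum_{i=0}^{d} r_i F^i$, so that identities in $R[F]$ reduce to coefficientwise identities in $R$, and that the defining relation iterates to $F^i r = r^{q^i} F^i$.

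For the right Ore condition, given $\alpha \in R[F]$ and $s \in S$, I would take $s' := s$ and compute
\[
\alpha s = \sum_i r_i F^i s = \sum_i r_i s^{q^i} F^i = s \cdot \sum_i s^{q^i - 1} r_i F^i,
\]
using the commutativity of $R$ and the factorization $s^{q^i} = s \cdot s^{q^i - 1}$ (with the convention $s^{q^0 - 1} = s^0 = 1$). This puts $\alpha s$ into the form $s \alpha'$ with $\alpha' \in R[F]$. For the left Ore condition the naive $s' = s$ fails because the exponents on $s$ coming from $F^i$ disagree with those on the right; instead I would take $s' := s^{q^d}$ with $d := \deg_F(\alpha)$ and write
\[
s^{q^d} \alpha = \sum_i s^{q^d} r_i F^i = \sum_i (r_i s^{q^d - q^i}) s^{q^i} F^i = \Bigl( \sum_i r_i s^{q^d - q^i} F^i \Bigr) s,
\]
yielding the desired shape $\alpha' s$.

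The two reversibility axioms are handled by the same devices. By left-polynomial uniqueness, $s\alpha = 0$ is equivalent to $s r_i = 0$ in $R$ for every $i$; then $\alpha s = \sum_i r_i s^{q^i} F^i = 0$, since $r_i s^{q^i} = s^{q^i - 1} \cdot s r_i = 0$ for $i \geq 1$, and $r_0 s = s r_0 = 0$ for $i = 0$. Hence $s' := s$ again witnesses right reversibility. Dually, $\alpha s = 0$ forces $r_i s^{q^i} = 0$ for all $i$, and then $s^{q^d} r_i = s^{q^d - q^i} \cdot s^{q^i} r_i = 0$ for $d := \deg_F(\alpha)$, so $s' := s^{q^d}$ establishes left reversibility. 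No step presents a genuine obstacle: everything is forced by the commutativity of $R$ together with the iterated commutation rule, which guarantees that moving $F^i$ past an element of $R$ only introduces a higher power of that same element, so that $s$ (or a suitable $q$-power thereof) always suffices as the denominator witnessing the axiom.
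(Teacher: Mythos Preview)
Your proof is correct and follows essentially the same route as the paper: the same choices $s' = s$ (right Ore) and $s' = s^{q^d}$ (left Ore), with the same coefficient manipulations $r_i s^{q^i-1}$ and $r_i s^{q^d - q^i}$. You additionally spell out the two reversibility axioms, which the paper leaves to the reader, and your argument there is fine.
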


\begin{proof}
This is well-known. As it is absolutely crucial for the following, we prove
the two key axioms:\textit{\ (}$S$\textit{\ is left permutable)} For every
$s\in S$ and $r\in R$ there exists some $\tilde{s}\in S$ and $\tilde{r}\in R$
such that $\tilde{r}s=\tilde{s}r$. This can be checked as follows: Given
$r=\sum_{i=0}^{n}r_{i}F^{i}$ with $r_{i}\in R$ define $\rho_{i}:=r_{i}%
s^{q^{n}-q^{i}}$ and $\tilde{r}:=\sum_{i=0}^{n}\rho_{i}F^{i}$ as well as
$\tilde{s}:=s^{q^{n}}$. Then%
\[
\tilde{r}s=\sum\rho_{i}F^{i}s=\sum r_{i}s^{q^{n}-q^{i}}s^{q^{i}}F^{i}=\sum
r_{i}s^{q^{n}}F^{i}=s^{q^{n}}\sum r_{i}F^{i}=\tilde{s}r
\]
as desired.\footnote{Note that this proof only works because there exists some
finite degree $n$. It would fail for Frobenius skew power series $R[[F]]$.}
\textit{(}$S$\textit{\ is right permutable)} For every $s\in S$ and $r\in R$
there exists some $\tilde{s}\in S$ and $\tilde{r}\in R$ such that $r\tilde
{s}=s\tilde{r}$. Using the notation as before, define $\tilde{s}:=s$ and
$\rho_{i}:=r_{i}s^{q^{i}-1}$. For $i=0$, the latter means $\rho_{0}=r_{0}$.
Then%
\[
s\tilde{r}=\sum s\rho_{i}F^{i}=\sum r_{i}s^{q^{i}}F^{i}=\sum r_{i}%
F^{i}s=r\tilde{s}\text{.}%
\]
One also needs to check whether $S$ is left or right reversible, and we leave
the very similar verification to the reader (if $R$ is a domain, these
conditions are empty).
\end{proof}

These conditions being checked, it follows that the left localization
$S^{-1}R[F]$ and right localization $R[F]S^{-1}$ both exist. In fact, they agree:

\begin{lemma}
[{\cite[(4.10), Prop.]{MR1271618}}]%
\label{Lemma_DifferentLocalizationsForFrobSkewRing}Let $R$ be a commutative
ring. Suppose $S$ is a multiplicative subset. Then there are isomorphisms%
\[
R[F]S^{-1}\cong S^{-1}R[F]\cong(S^{-1}R)[F]\cong S^{-1}R\otimes_{R}R[F]\cong
R[F]\otimes_{R}S^{-1}R\text{.}%
\]

\end{lemma}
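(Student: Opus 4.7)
The plan is to chain together three kinds of identifications, each handled by its own tool: the left-right symmetry follows from the two-sided Ore property just established, the identification with $(S^{-1}R)[F]$ follows from a universal property argument, and the tensor product descriptions follow from the fact that $R[F]$ is free as a left (and right) $R$-module. The whole thing is essentially formal once we know $S$ is a two-sided denominator set.

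First, I would start from Lemma \ref{lemma_multSetSInRIsBiOreInRF}, which guarantees that both $S^{-1}R[F]$ and $R[F]S^{-1}$ exist and satisfy the universal property ``initial ring map from $R[F]$ sending every element of $S$ to a unit''. Since the universal property is left-right symmetric, this immediately gives the canonical isomorphism $R[F]S^{-1} \cong S^{-1}R[F]$ between left and right Ore localizations.

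For the identification with $(S^{-1}R)[F]$, I would construct mutually inverse maps by universal properties in both directions. In one direction, the composition $R \hookrightarrow S^{-1}R \hookrightarrow (S^{-1}R)[F]$ sends $S$ to units, so it extends to a ring map $R[F] \to (S^{-1}R)[F]$, which again sends $S$ to units, hence factors through $S^{-1}R[F]$. In the other direction, the map $R \to R[F] \to S^{-1}R[F]$ sends each $s \in S$ to a unit, hence factors through $S^{-1}R$, producing a map $S^{-1}R \to S^{-1}R[F]$; I then need to check that the Frobenius relation $x^{q}F = Fx$ still holds for every $x \in S^{-1}R$ inside $S^{-1}R[F]$, which amounts to the following calculation: from $s^{q}F = Fs$ in $R[F]$ we deduce $Fs^{-1} = s^{-q}F$ in $S^{-1}R[F]$, and therefore for $r/s \in S^{-1}R$,
\[
F \cdot (r s^{-1}) = r^{q} F s^{-1} = r^{q} s^{-q} F = (r s^{-1})^{q} F.
\]
Hence by the universal property of $(S^{-1}R)[F]$ (as a quotient of $S^{-1}R\{F\}$), the map $S^{-1}R \to S^{-1}R[F]$ together with $F \mapsto F$ extends to a ring map $(S^{-1}R)[F] \to S^{-1}R[F]$. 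Checking that the two maps are mutual inverses reduces to checking agreement on the generators $R$ and $F$, which is tautological.

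Finally, for the tensor product descriptions, I would invoke the standard structure theorem for Ore localizations: when $S$ is a left denominator set in a ring $A$ and $A$ is flat as a left module over the subring generated by $S$ (here $R[F]$ is even \emph{free} as a left $R$-module, as noted at the start of \S 2.1), the left Ore localization $S^{-1}A$ is canonically isomorphic to $S^{-1}R \otimes_R A$ as an $(S^{-1}R, A)$-bimodule, with ring structure uniquely determined by the bimodule structure and the unit $1 \otimes 1$. Applied to $A = R[F]$ this gives $S^{-1}R[F] \cong S^{-1}R \otimes_R R[F]$, and the symmetric argument for the right denominator set gives $R[F]S^{-1} \cong R[F] \otimes_R S^{-1}R$.

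The main obstacle, in my view, is not really the chain of isomorphisms itself but rather a bookkeeping issue: one must consistently track which side of $R[F]$ the module structures sit on, and verify that each of the isomorphisms above is compatible with the \emph{ring} structure and not merely the bimodule structure. The computation $Fs^{-1} = s^{-q}F$ is the essential algebraic fact making the whole picture work, and once it is in hand the rest is routine checking on generators.
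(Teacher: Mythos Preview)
The paper does not give its own proof of this lemma; it is simply stated with a citation to Yoshino \cite[(4.10), Prop.]{MR1271618}. Your proposal therefore supplies what the paper omits, and the overall line of argument (universal properties plus the key relation $Fs^{-1}=s^{-q}F$) is correct and is the natural one.

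One caution on the tensor-product step: your appeal to freeness/flatness is both unnecessary and, on the right side, unavailable. The identification $S^{-1}R\otimes_R R[F]\cong (S^{-1}R)[F]$ is immediate because $R[F]=\bigoplus_i RF^i$ is free as a \emph{left} $R$-module, so base change along $R\to S^{-1}R$ just replaces coefficients. But for $R[F]\otimes_R S^{-1}R$ you invoke the ``symmetric argument'', and the paper explicitly warns (just before Lemma~\ref{lemma_multSetSInRIsBiOreInRF}) that $R[F]$ need \emph{not} be free as a right $R$-module. The isomorphism still holds, but you should justify it directly: using the same left-$R$ decomposition $R[F]=\bigoplus_i RF^i$, each summand $RF^i$ is, as a right $R$-module, a copy of $R$ with the $q^i$-twisted action, and one checks by hand that $RF^i\otimes_R S^{-1}R\to (S^{-1}R)F^i$, $rF^i\otimes s^{-1}\mapsto rs^{-q^i}F^i$, is an isomorphism (surjectivity via $aF^i\otimes b^{-1}\mapsto ab^{-q^i}F^i$ and clearing denominators; injectivity since $R$ is commutative). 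Alternatively, once you have $R[F]S^{-1}\cong(S^{-1}R)[F]$ from the universal-property argument, you can simply observe that the right Ore localization of any ring $A\supseteq R$ at $S$ agrees, as an abelian group, with the module localization $A\otimes_R S^{-1}R$, with no flatness needed.
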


This lemma implies that $\mathcal{O}_{X}[F]$ can be turned into a
quasi-coherent sheaf of $\mathcal{O}_{X}$-modules; and for this it does not
matter whether we let $\mathcal{O}_{X}$ act from the left or the right (even
though these two actions are different!). It becomes a sheaf of $\mathcal{O}%
_{X}$-bimodules over $\mathbf{F}_{q}$, or equivalently a sheaf of left
$(\mathcal{O}_{X}\otimes_{\mathbf{F}_{q}}\mathcal{O}_{X})$-modules with%
\[
(\alpha,\beta)\cdot m:=\alpha m\beta\text{.}%
\]
Note that it is \textit{not} a sheaf of $\mathcal{O}_{X}$-algebras since the
natural inclusion $\mathcal{O}_{X}\hookrightarrow\mathcal{O}_{X}[F]$ as
constant polynomials does not lie in the center of the ring.

\begin{corollary}
$\mathcal{O}_{X}[F]$ is a quasi-coherent sheaf of $\mathcal{O}_{X}$-modules,
and this structure is indifferent to whether we let $\mathcal{O}_{X}$ act from
the left or from the right. It is also a sheaf of $\mathbf{F}_{q}$-algebras,
but (usually) not of $\mathcal{O}_{X}$-algebras.
\end{corollary}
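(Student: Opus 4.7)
The plan is to read off everything from the localization identities of Lemma \ref{Lemma_DifferentLocalizationsForFrobSkewRing}. I would cover $X$ by affine opens $U = \operatorname{Spec} R$ and, on each such $U$, define the candidate sheaf via the usual affine recipe: to a basic open $\operatorname{Spec} R_f \subseteq U$ we assign the sections $R_f[F]$, regarded as an $R$-module through either the left or the right action. The restriction maps that one needs for a basic open $\operatorname{Spec} R_{fg} \subseteq \operatorname{Spec} R_f$ are supplied by the canonical isomorphisms $R_{fg}[F] \cong R_{fg} \otimes_{R_f} R_f[F]$ of Lemma \ref{Lemma_DifferentLocalizationsForFrobSkewRing}. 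This shows that $R[F]$ satisfies the Serre-type criterion for quasi-coherence on $U$, so it defines a quasi-coherent $\mathcal{O}_U$-module in the standard way. Gluing over an affine atlas of $X$ is then automatic, because the gluing isomorphisms on overlaps are again the canonical localization isomorphisms of the lemma.

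For the left/right claim, the two $R$-module structures on $R[F]$ are genuinely different on elements (indeed $rF \neq Fr$ in general), but Lemma \ref{Lemma_DifferentLocalizationsForFrobSkewRing} asserts $R[F] S^{-1} \cong S^{-1} R[F]$ as $R$-bimodules. Consequently the two affine-local recipes above produce canonically isomorphic sheaves, so the underlying sheaf is indifferent to the choice of side. For the $\mathbf{F}_q$-algebra structure I would simply observe that the multiplication on $R[F]$ is $\mathbf{F}_q$-bilinear, is preserved by the localization morphisms (the lemma states the isomorphisms are ring isomorphisms), and hence passes through the sheafification to give $\mathcal{O}_X[F]$ the structure of a sheaf of $\mathbf{F}_q$-algebras.

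The negative part of the statement is immediate: the defining relation $Fr = r^q F$ shows that whenever $R$ contains an element $r$ with $r^q \neq r$ (i.e.\ $r \notin \mathbf{F}_q$), the constant polynomial $r$ fails to commute with $F$ in $R[F]$. So the natural inclusion $\mathcal{O}_X \hookrightarrow \mathcal{O}_X[F]$ does not land in the center, and there is no way to regard $\mathcal{O}_X[F]$ as a sheaf of $\mathcal{O}_X$-algebras. There is no genuine obstacle to this corollary: all the content has been absorbed into the verification of the Ore conditions (Lemma \ref{lemma_multSetSInRIsBiOreInRF}) and of the localization identities (Lemma \ref{Lemma_DifferentLocalizationsForFrobSkewRing}). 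What remains is standard bookkeeping for the affine-local construction of quasi-coherent sheaves on a scheme.
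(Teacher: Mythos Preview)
Your proposal is correct and follows essentially the same approach as the paper: the paper simply states that the corollary follows from Lemma~\ref{Lemma_DifferentLocalizationsForFrobSkewRing} (the localization identities), together with the observation that the inclusion $\mathcal{O}_X \hookrightarrow \mathcal{O}_X[F]$ does not land in the center. Your write-up is more explicit about the affine-local bookkeeping than the paper, which leaves it entirely to the reader, but the underlying argument is identical.
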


\begin{remark}
In \cite{MR2071510} the sheaf $\mathcal{O}_{X}[F]$ is usually denoted by
$\mathcal{O}_{F,X}$.
\end{remark}

\subsection{Categories of sheaves}

Next, we shall introduce some categories mimicking ordinary coherent
$\mathcal{O}_{X}$-modules, but with an additional right action by the
Frobenius (so the action looks like a Cartier operator). Because the finite
presentation conditions will be with respect to $\mathcal{O}_{X}[F]$ instead
of $\mathcal{O}_{X}$, these can be much bigger than the coherent Cartier
modules of \cite{MR2863904}.

\begin{definition}
Let $X/\mathbf{F}_{q}$ be an $F$-finite Noetherian separated scheme.

\begin{enumerate}
\item Denote by $\mathsf{Coh}_{\mathcal{O}_{X}[F]}(X)$ the category whose
objects are locally finitely presented right $\mathcal{O}_{X}[F]$-module
sheaves, whose underlying (right) $\mathcal{O}_{X}$-module sheaves are
quasi-coherent. Morphisms are arbitrary right $\mathcal{O}_{X}[F]$-module morphisms.

\item If $Z\subseteq X$ is a closed subset, define the full subcategory
$\mathsf{Coh}_{\mathcal{O}_{X}[F],Z}(X)$ which consists of those right
$\mathcal{O}_{X}[F]$-module sheaves whose support, in terms of the underlying
right $\mathcal{O}_{X}$-module sheaf, is contained in $Z$.
\end{enumerate}
\end{definition}

As the rings $R[F]$ are right coherent, we will of course expect to get an
abelian category this way:

\begin{lemma}
\label{Lemma_OXFIsAbelianCategory}Let $X/\mathbf{F}_{q}$ be an $F$-finite
Noetherian separated scheme. Then

\begin{enumerate}
\item the category $\mathsf{Coh}_{\mathcal{O}_{X}[F]}(X)$ is an abelian
category, and

\item for every closed subset $Z\subseteq X$, the category $\mathsf{Coh}%
_{\mathcal{O}_{X}[F],Z}(X)$ a\ Serre subcategory. In particular,
$\mathsf{Coh}_{\mathcal{O}_{X}[F],Z}(X)$ is itself an abelian category.
\end{enumerate}
\end{lemma}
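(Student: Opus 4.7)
The plan is to localize everything to affine opens and then invoke the ring-theoretic coherence established in Proposition~\ref{prop_RightSuperCoherence}. For part (1), let $f\colon\mathcal{F}\to\mathcal{G}$ be a morphism in $\mathsf{Coh}_{\mathcal{O}_X[F]}(X)$. First I would form the kernel $\mathcal{K}$, cokernel $\mathcal{C}$, and image $\mathcal{I}$ as sheaves of right $\mathcal{O}_X[F]$-modules in the ambient abelian category of all such sheaves; the first isomorphism theorem and the abelian axioms then hold automatically at the sheaf level. The only substantive point is therefore to check that $\mathcal{K}$, $\mathcal{C}$, and $\mathcal{I}$ are again in $\mathsf{Coh}_{\mathcal{O}_X[F]}(X)$, i.e.\ locally finitely presented over $\mathcal{O}_X[F]$ with underlying quasi-coherent $\mathcal{O}_X$-module structure.

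Next I would reduce to an affine open $U=\operatorname{Spec}(R)$, with $R$ a Noetherian $F$-finite $\mathbf{F}_q$-algebra. After shrinking $U$, the sheaves $\mathcal{F}|_U$ and $\mathcal{G}|_U$ come from finitely presented right $R[F]$-modules $M$ and $N$, and $f|_U$ corresponds to an $R[F]$-linear map $\varphi\colon M\to N$. By Proposition~\ref{prop_RightSuperCoherence} the ring $R[F]$ is right coherent, so by Proposition~\ref{prop_CharacterizationsOfRightCoherentRings}(3) the category of finitely presented right $R[F]$-modules is abelian; in particular $\ker\varphi$ and $\operatorname{coker}\varphi$ are finitely presented. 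Sheafifying them gives back $\mathcal{K}|_U$ and $\mathcal{C}|_U$ because the Ore localizations from Lemma~\ref{Lemma_DifferentLocalizationsForFrobSkewRing} are exact (and both $\mathcal{O}_X$-actions yield the same localization, so no side ambiguity arises). Since finite presentation is Zariski-local, this glues to the desired global conclusion, and quasi-coherence of the underlying $\mathcal{O}_X$-module follows from the same local model.

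For part (2), I would directly verify the Serre subcategory axioms. Given a short exact sequence $0\to\mathcal{F}'\to\mathcal{F}\to\mathcal{F}''\to0$ in the abelian category supplied by part (1), the underlying sequence of quasi-coherent $\mathcal{O}_X$-modules is exact, so set-theoretically $\operatorname{supp}(\mathcal{F})=\operatorname{supp}(\mathcal{F}')\cup\operatorname{supp}(\mathcal{F}'')$. Therefore $\mathcal{F}\in\mathsf{Coh}_{\mathcal{O}_X[F],Z}(X)$ forces $\mathcal{F}'$ and $\mathcal{F}''$ to lie in it as well (closure under subobjects and quotients), and conversely if both $\mathcal{F}'$ and $\mathcal{F}''$ are supported in $Z$ then so is $\mathcal{F}$ (closure under extensions). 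This is precisely the Serre condition, and it follows that $\mathsf{Coh}_{\mathcal{O}_X[F],Z}(X)$ is itself abelian.

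The main obstacle I anticipate is simply the bookkeeping around the two different $\mathcal{O}_X$-actions on $\mathcal{O}_X[F]$: local finite presentation is imposed over $\mathcal{O}_X[F]$, whereas quasi-coherence is imposed over one of the two $\mathcal{O}_X$-structures. Lemma~\ref{Lemma_DifferentLocalizationsForFrobSkewRing} is what makes this harmless, since it shows that either side-choice produces the same localization, so all affine-local computations pass through unambiguously.
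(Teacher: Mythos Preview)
Your proposal is correct and follows essentially the same route as the paper: both arguments embed $\mathsf{Coh}_{\mathcal{O}_X[F]}(X)$ in the ambient abelian category of all right $\mathcal{O}_X[F]$-module sheaves, reduce the closure under kernels and cokernels to the affine case via right coherence of $R[F]$ (Proposition~\ref{prop_RightSuperCoherence} together with Proposition~\ref{prop_CharacterizationsOfRightCoherentRings}), and handle part~(2) by the support identity $\operatorname{supp}\mathcal{F}=\operatorname{supp}\mathcal{F}'\cup\operatorname{supp}\mathcal{F}''$. Your explicit invocation of Lemma~\ref{Lemma_DifferentLocalizationsForFrobSkewRing} to justify that the affine-local computations glue is a useful clarification that the paper leaves implicit.
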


\begin{proof}
\textit{(1)} Affine locally, the ring $\mathcal{O}_{X}[F](U)=R[F]$ for
$R:=\mathcal{O}_{X}(U)$ is right coherent by Prop.
\ref{prop_RightSuperCoherence} and $X$ being $F$-finite. Affine locally, right
$\mathcal{O}_{X}[F]$-module sheaves then correspond to a finitely presented
right $R[F]$-modules. By Prop.
\ref{prop_CharacterizationsOfRightCoherentRings} the latter category is
abelian. Globally, the category of all right $\mathcal{O}_{X}[F]$-module
sheaves (with no more conditions) is Grothendieck abelian. One shows that
$\mathsf{Coh}_{\mathcal{O}_{X}[F]}(X)$ is closed under kernels and cokernels
in it. \textit{(2)} For any quasi-coherent sheaf of $\mathcal{O}_{X}$-modules
$\mathcal{F}$ the \emph{support} is defined as the set of scheme points $x\in
X$ such that $\mathcal{F}_{x}\neq0$ (this definition is more customary in the
context of coherent sheaves, where $\mathcal{F}$ is additionally known to be a
closed subset. For quasi-coherent sheaves the support can be arbitrary). It is
easy to see that for a short exact sequence $0\rightarrow\mathcal{F}^{\prime
}\rightarrow\mathcal{F}\rightarrow\mathcal{F}^{\prime\prime}\rightarrow0$ of
such sheaves, one has $\operatorname*{supp}\mathcal{F}=\operatorname*{supp}%
\mathcal{F}^{\prime}\cup\operatorname*{supp}\mathcal{F}^{\prime\prime}$. This
immediately implies that $\mathsf{Coh}_{\mathcal{O}_{X}[F],Z}(X)$ is a Serre subcategory.
\end{proof}

As in the classical case, the $K$-theory for these Frobenius skew-rings does
not see nil-thickenings:

\begin{lemma}
[{D\'{e}vissage for $\mathcal{O}_{X}[F]$}]\label{lemma_DevissageForOXF}Let
$X/\mathbf{F}_{q}$ be an $F$-finite Noetherian separated scheme,
$i:Z\hookrightarrow X$ a closed subscheme. Then the inclusion of categories%
\[
\mathsf{Coh}_{\mathcal{O}_{Z}[F]}(Z)\hookrightarrow\mathsf{Coh}_{\mathcal{O}%
_{X}[F],Z}(X)
\]
via pushforward, induces an equivalence in $K$-theory.%
\[
K(\mathsf{Coh}_{\mathcal{O}_{Z}[F]}(Z))\overset{\sim}{\longrightarrow
}K(\mathsf{Coh}_{\mathcal{O}_{X}[F],Z}(X))\text{.}%
\]

\end{lemma}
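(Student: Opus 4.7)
The plan is to apply Quillen's dévissage theorem to the inclusion functor $i_{\ast}$. Let $\mathcal{I}\subseteq\mathcal{O}_{X}$ denote the ideal sheaf of $Z$ and put $\mathcal{J}:=\mathcal{I}\cdot\mathcal{O}_{X}[F]$. Because the defining relation $Fr=r^{q}F$ sends $\mathcal{I}$ into $\mathcal{I}$, the sheaf $\mathcal{J}$ is a two-sided ideal in $\mathcal{O}_{X}[F]$, and affine-locally one has $R[F]/IR[F]\cong(R/I)[F]$; this globalizes to $\mathcal{O}_{X}[F]/\mathcal{J}\cong i_{\ast}\mathcal{O}_{Z}[F]$. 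The pushforward $i_{\ast}$ is thus fully faithful with essential image the full subcategory $\mathcal{B}\subseteq\mathsf{Coh}_{\mathcal{O}_{X}[F],Z}(X)$ of those sheaves annihilated by $\mathcal{J}$ (equivalently, by $\mathcal{I}$). The identification that ``finitely presented over $(R/I)[F]$'' matches ``finitely presented over $R[F]$ and annihilated by $I$'' uses right coherence of both rings (Proposition \ref{prop_RightSuperCoherence}) together with the fact that $(R/I)[F]$ is finitely presented as a right $R[F]$-module, which follows from Noetherianity of $R$. The subcategory $\mathcal{B}$ is trivially closed under subobjects, quotients, and finite products.

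The key step is to exhibit, for each $\mathcal{F}\in\mathsf{Coh}_{\mathcal{O}_{X}[F],Z}(X)$, a finite filtration with subquotients in $\mathcal{B}$. I use
\[
\mathcal{F}\supseteq\mathcal{F}\mathcal{J}\supseteq\mathcal{F}\mathcal{J}^{2}\supseteq\cdots\supseteq\mathcal{F}\mathcal{J}^{N}=0.
\]
Each $\mathcal{F}\mathcal{J}^{k}$ is a right $\mathcal{O}_{X}[F]$-subsheaf (since $\mathcal{J}^{k}$ is two-sided) and locally finitely generated (as $\mathcal{F}$ is finitely generated and $\mathcal{J}$ is locally a finitely generated right ideal, by Noetherianity of $R$), hence locally finitely presented by the right coherence of $\mathcal{O}_{X}[F]$ (Proposition \ref{prop_RightSuperCoherence}). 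The subquotients $\mathcal{F}\mathcal{J}^{k}/\mathcal{F}\mathcal{J}^{k+1}$ are annihilated by $\mathcal{J}$, so they lie in $\mathcal{B}$.

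To show the filtration terminates, one argues affine-locally. Let $M=\mathcal{F}(U)$ on $U=\operatorname{Spec}R$ with right $R[F]$-generators $m_{1},\ldots,m_{r}$. Since the support is in $Z=V(I)$, each $m_{i}$ is killed by some power $I^{n_{i}}$ as an $R$-module element. The relation $F^{j}r=r^{q^{j}}F^{j}$ gives, for $m\in M$ with $m\cdot I^{n}=0$,
\[
(mF^{j})\cdot I^{n}\subseteq m\cdot I^{nq^{j}}F^{j}\subseteq(m\cdot I^{n})F^{j}=0,
\]
so the entire right $R[F]$-submodule generated by $m_{i}$ is annihilated by $I^{n_{i}}$. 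Hence $M\cdot I^{N}=0$ for $N:=\max_{i}n_{i}$, and consequently $MJ^{N}=(M\cdot I^{N})R[F]=0$, where $J=IR[F]$. Taking the maximum over a finite affine cover produces a uniform global $N$ with $\mathcal{F}\mathcal{J}^{N}=0$, and Quillen's dévissage theorem \cite{MR0338129} concludes. The main obstacle is the non-commutativity: the naive filtration by powers of $\mathcal{I}$ itself fails, because $\mathcal{F}\cdot\mathcal{I}$ need not be closed under right multiplication by $F$ (there is no reason for $miF$ to lie in $\mathcal{F}\cdot\mathcal{I}$ when $i$ has no $q$-th root in $R$); replacing $\mathcal{I}$ by the two-sided ideal $\mathcal{J}$ it generates is the essential fix.
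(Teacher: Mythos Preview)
Your argument is correct and follows essentially the same route as the paper: identify $\mathcal{J}=\mathcal{I}\cdot\mathcal{O}_{X}[F]$ as a two-sided ideal, filter by $\mathcal{F}\mathcal{J}^{k}$, and apply Quillen's d\'{e}vissage. Your write-up is in fact more careful than the paper's on two points the paper leaves implicit: that the filtration steps $\mathcal{F}\mathcal{J}^{k}$ remain locally finitely presented (via right coherence), and the explicit verification that the generators $m_{i}$ are annihilated by a power of $I$ and that this propagates to the whole right $R[F]$-submodule via $F^{j}r=r^{q^{j}}F^{j}$.
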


\begin{proof}
We adapt the proof of the analogous result for $\mathcal{O}_{X}$-module
sheaves. We begin with the following observation: Let $R$ be any commutative
$\mathbf{F}_{q}$-algebra and $I\subseteq R$ an ideal. Then the set%
\[
I\cdot R[F]=\left\{  \left.  \sum r_{i}F^{i}\right\vert r_{i}\in I\right\}
\]
is a two-sided ideal in $R[F]$ (since $s\cdot r_{i}F^{i}=(sr_{i})F^{i}$ and
$r_{i}F^{i}\cdot s=(r_{i}s^{q^{i}})F^{i}$ for all $s\in R$). We get%
\[
\iota^{\#}:R[F]\longrightarrow\left.  R[F]\right.  /\left.  I\cdot
R[F]\right.  \overset{\sim}{\longrightarrow}(R/I)[F]\text{,}%
\]
which generalizes to the respective sheaves of algebras $\mathcal{O}_{X}[F]$
and $\mathcal{O}_{Z}[F]$, i.e. we may read $\iota^{\#}$ as corresponding to
the closed immersion $i$ affine locally. Moreover, affine locally, the
pushforward interprets a right $(R/I)[F]$-module as a right $R[F]$-module via
$\iota^{\#}$. This defines an exact functor. Let $\mathcal{I}_{Z}$ denote the
ideal sheaf defining the closed immersion $i$. As the support for a sheaf in
$\mathsf{Coh}_{\mathcal{O}_{X}[F],Z}(X)$ was defined on the level of the
underlying quasi-coherent $\mathcal{O}_{X}$-module sheaf, every such sheaf can
be presented as an $\mathcal{O}_{X}[F]/\mathcal{I}_{Z}^{m}[F]\cong
(\mathcal{O}_{X}/\mathcal{I}_{Z}^{m})[F]$-module sheaf for some sufficiently
large $m\geq1$. In particular, we can filter the sheaf according to the powers
of $\mathcal{I}_{Z}^{m}[F]$. Thus, every object in $\mathsf{Coh}%
_{\mathcal{O}_{X}[F],Z}(X)$ admits a finite filtration whose graded pieces are
annihilated by $\mathcal{I}_{Z}^{m}[F]$, and thus lie in the full sub-category
of $\mathcal{I}_{Z}[F]$-annihilated sheaves. This category is non-empty, full,
closed under subobjects, quotients and finite direct sums. Thus, the
assumptions of Quillen's d\'{e}vissage theorem are satisfied, \cite[\S 5,
Theorem 4]{MR0338129}. However, the latter is also equivalent to the category
of finitely presented right $(\mathcal{O}_{X}/\mathcal{I}_{Z})[F]$-module
sheaves whose underlying $\mathcal{O}_{X}$-module sheaves are quasi-coherent,
i.e. $\mathsf{Coh}_{\mathcal{O}_{Z}[F]}(Z)$, giving the claim.
\end{proof}

\begin{remark}
It is important to work with $I\cdot R[F]$. Choosing the reverse order,
$R[F]\cdot I$, virtually never yields a two-sided ideal, only a left ideal.
\end{remark}

In order to proceed, it will be necessary to relate the quotient category%
\[
\mathsf{Coh}_{\mathcal{O}_{X}[F]}(X)/\mathsf{Coh}_{\mathcal{O}_{X}[F],Z}(X)
\]
to the open complement $U:=X-Z$. Recall the following fact due to Gabriel,
which explains the interplay of these categories in the classical case:

\begin{proposition}
[{\cite[\S III.5, Prop. VI.3]{MR0232821}}]\label{Prop_GabrielLocalization}%
Suppose $X$ is a scheme, $U\subseteq X$ an open subset such that the open
immersion $j:U\hookrightarrow X$ is a quasi-compact morphism.

\begin{enumerate}
\item Then for any quasi-coherent $\mathcal{O}_{U}$-module sheaf $\mathcal{F}
$ on $U$, the pushforward $j_{\ast}\mathcal{F}$ is a quasi-coherent
$\mathcal{O}_{X}$-module sheaf, and this functor is right adjoint to the
pullback $j^{\ast}$ so that%
\[
j^{\ast}\leftrightarrows j_{\ast}%
\]
is a pair of adjoint functors. Moreover, $j_{\ast}$ is fully faithful.

\item Let $\ker j^{\ast}$ be the full subcategory of objects $\mathcal{F}%
\in\mathsf{QCoh}(X)$ such that $j^{\ast}\mathcal{F}$ is a zero object. Then
$\ker j^{\ast}$ is precisely the Serre subcategory of quasi-coherent
$\mathcal{O}_{X}$-module sheaves with support contained in the closed set
$Z:=X-U$, i.e.%
\[
\ker j^{\ast}=\mathsf{QCoh}_{Z}(X)\text{.}%
\]

\item There is an equivalence of abelian categories%
\[
\mathsf{QCoh}(U)\overset{\sim}{\longrightarrow}\mathsf{QCoh}(X)/\mathsf{QCoh}%
_{Z}(X)\text{.}%
\]

\end{enumerate}
\end{proposition}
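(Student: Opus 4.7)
The plan is to prove the three parts in sequence, with (1) carrying the only real technical content and (2), (3) being formal consequences. For (1), the adjunction $j^{\ast}\dashv j_{\ast}$ exists for any morphism of ringed spaces, so the genuine input is that $j_{\ast}$ preserves quasi-coherence. I would reduce to $X=\operatorname{Spec}(A)$ affine: by quasi-compactness of $j$, the open $U$ is covered by finitely many distinguished opens $D(f_{i})=\operatorname{Spec}(A_{f_{i}})$, and then the restriction of $j_{\ast}\mathcal{F}$ to such an affine open is computed as the equalizer of $\prod\widetilde{M_{i}}\rightrightarrows\prod\widetilde{M_{ij}}$, where $M_{i}=\mathcal{F}(D(f_{i}))$ and $M_{ij}$ corresponds to the pairwise intersections. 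Since the functor $\widetilde{(-)}$ from $A$-modules to quasi-coherent sheaves commutes with finite limits, $j_{\ast}\mathcal{F}$ is of the form $\widetilde{M}$ for $M=\ker\bigl(\prod M_{i}\rightrightarrows\prod M_{ij}\bigr)$, hence quasi-coherent. Full faithfulness of $j_{\ast}$ is then equivalent to the counit $j^{\ast}j_{\ast}\mathcal{F}\to\mathcal{F}$ being an isomorphism, which holds because $j$ is an open immersion: restricting the above equalizer back to $U$ collapses it onto $\mathcal{F}$ itself.

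For (2), the pullback $j^{\ast}\mathcal{F}=\mathcal{F}|_{U}$ vanishes iff $\mathcal{F}_{x}=0$ for every $x\in U$, iff $\operatorname{supp}(\mathcal{F})\subseteq Z$. The subcategory $\mathsf{QCoh}_{Z}(X)$ is a Serre subcategory because for any short exact sequence of quasi-coherent sheaves $0\to\mathcal{F}^{\prime}\to\mathcal{F}\to\mathcal{F}^{\prime\prime}\to 0$, the support of the middle term is the union of the supports of the outer ones, giving closure under subobjects, quotients and extensions.

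For (3), I would invoke the abstract Gabriel localization criterion: an exact functor between abelian categories possessing a fully faithful right adjoint induces an equivalence from the quotient by its kernel onto the target. Exactness of $j^{\ast}$ is immediate since restriction to an open is stalkwise exact and preserves quasi-coherence (affine-locally it is an ordinary module localization). Part (1) supplies the fully faithful right adjoint $j_{\ast}$, and part (2) identifies the kernel as $\mathsf{QCoh}_{Z}(X)$. The main obstacle is thus concentrated in part (1), namely securing quasi-coherence of $j_{\ast}\mathcal{F}$ --- this is precisely where the quasi-compactness hypothesis on $j$ is used and where a more careless argument would fail; once it is in place, everything else is formal bookkeeping.
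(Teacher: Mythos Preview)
Your proof is correct. However, the paper does not actually prove this proposition: it is stated as a classical result with a citation to Gabriel's thesis (the bracketed reference in the header), and no argument is given in the text. So there is no ``paper's own proof'' to compare against; you have supplied what the paper deliberately outsources to the literature.

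That said, your argument matches the standard one and is essentially how Gabriel's result is usually unpacked: the only non-formal step is the preservation of quasi-coherence under $j_{\ast}$, which you correctly pin on the quasi-compactness hypothesis via a finite \v{C}ech-type equalizer, and then parts (2) and (3) follow from the abstract localization machinery (exact functor with fully faithful right adjoint induces an equivalence modulo its kernel). One minor remark: in your reduction for (1) you implicitly use that the pairwise intersections $D(f_{i})\cap D(f_{j})$ are again affine, which holds here because $X$ is affine and hence separated; in a more general setting one would also want $j$ (or $X$) quasi-separated, but under the paper's standing hypotheses this is harmless.
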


\begin{proposition}
[{Localization for $\mathcal{O}_{X}[F]$}]\label{prop_KFLocalizationSequence}%
Let $X/\mathbf{F}_{q}$ be an $F$-finite Noetherian separated scheme,
$i:Z\hookrightarrow X$ a reduced closed subscheme and $U:=X-Z$ the open
complement. Write $j:U\hookrightarrow X$ for the open immersion. Then there is
a homotopy fiber sequence%
\begin{equation}
K(\mathsf{Coh}_{\mathcal{O}_{Z}[F]}(Z))\overset{i_{\ast}}{\longrightarrow
}K(\mathsf{Coh}_{\mathcal{O}_{X}[F]}(X))\overset{j^{\ast}}{\longrightarrow
}K(\mathsf{Coh}_{\mathcal{O}_{U}[F]}(U))\longrightarrow+1\text{.}\label{lAM3}%
\end{equation}

\end{proposition}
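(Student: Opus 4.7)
The plan is to invoke Quillen's localization theorem for abelian categories applied to the Serre subcategory inclusion
\[
\mathsf{Coh}_{\mathcal{O}_{X}[F],Z}(X) \hookrightarrow \mathsf{Coh}_{\mathcal{O}_{X}[F]}(X)
\]
established in Lemma \ref{Lemma_OXFIsAbelianCategory}(2). This immediately produces a homotopy fiber sequence
\[
K(\mathsf{Coh}_{\mathcal{O}_{X}[F],Z}(X)) \longrightarrow K(\mathsf{Coh}_{\mathcal{O}_{X}[F]}(X)) \longrightarrow K(\mathcal{Q}) \longrightarrow +1\text{,}
\]
where $\mathcal{Q}$ denotes the Serre quotient. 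On the left, Lemma \ref{lemma_DevissageForOXF} (d\'evissage) identifies $K(\mathsf{Coh}_{\mathcal{O}_{X}[F],Z}(X))$ with $K(\mathsf{Coh}_{\mathcal{O}_{Z}[F]}(Z))$ via $i_{\ast}$, using that $Z$ is reduced so that the support-based subcategory matches the scheme-theoretic subcategory. The entire work is then to identify $\mathcal{Q}$ with $\mathsf{Coh}_{\mathcal{O}_{U}[F]}(U)$ in such a way that the canonical quotient functor becomes $j^{\ast}$.

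For that identification I would model the argument on Gabriel's Proposition \ref{Prop_GabrielLocalization}, transported from $\mathcal{O}_{X}$-modules to right $\mathcal{O}_{X}[F]$-modules. The crucial compatibility is that $j^{-1}\mathcal{O}_{X}[F] \cong \mathcal{O}_{U}[F]$ as sheaves of rings, which follows from Lemma \ref{Lemma_DifferentLocalizationsForFrobSkewRing}, since that lemma precisely says that Ore localization of $R[F]$ at a multiplicative subset $S \subseteq R$ commutes with forming the Frobenius skew ring. Consequently $j^{\ast}$ is an exact functor $\mathsf{Coh}_{\mathcal{O}_{X}[F]}(X) \to \mathsf{Coh}_{\mathcal{O}_{U}[F]}(U)$ (restriction preserves the local finite-presentation condition, since a presentation restricts to a presentation), whose kernel is tautologically $\mathsf{Coh}_{\mathcal{O}_{X}[F],Z}(X)$. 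By the universal property of Serre quotients we obtain a canonical comparison functor $\mathcal{Q} \to \mathsf{Coh}_{\mathcal{O}_{U}[F]}(U)$, and we must show it is an equivalence. Full faithfulness is handled by the standard calculus-of-fractions computation in Serre quotients, using the adjunction $j^{\ast} \dashv j_{\ast}$ at the level of quasi-coherent right $\mathcal{O}_{X}[F]$-module sheaves and the identity $j^{\ast} j_{\ast} \cong \mathrm{id}$, which again reduces to Lemma \ref{Lemma_DifferentLocalizationsForFrobSkewRing}.

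The main obstacle is essential surjectivity of the comparison functor: every object $\mathcal{G} \in \mathsf{Coh}_{\mathcal{O}_{U}[F]}(U)$ must arise, up to subquotients in the kernel, from a locally finitely presented right $\mathcal{O}_{X}[F]$-module sheaf on $X$. My strategy is to form the quasi-coherent pushforward $j_{\ast}\mathcal{G}$, which carries a natural right $\mathcal{O}_{X}[F]$-module structure because $j_{\ast}$ respects the sheaf-of-rings structure. Since by Proposition \ref{prop_RightSuperCoherence} the sheaf $\mathcal{O}_{X}[F]$ is locally right coherent (affine locally of the form $R[F]$ with $R$ an $F$-finite Noetherian $\mathbf{F}_{q}$-algebra), any quasi-coherent right $\mathcal{O}_{X}[F]$-module sheaf is the filtered union of its locally finitely presented $\mathcal{O}_{X}[F]$-submodules. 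Because $j^{\ast}$ preserves filtered colimits and $\mathcal{G}$ is finitely presented (hence $j^{\ast}$-compact), and because $j^{\ast} j_{\ast}\mathcal{G} \cong \mathcal{G}$, some finitely presented subsheaf $\mathcal{F} \subseteq j_{\ast}\mathcal{G}$ must already satisfy $j^{\ast}\mathcal{F} \cong \mathcal{G}$. Affine-locally this amounts to clearing denominators in a finite presentation matrix of $\mathcal{G}$ over some $S^{-1}R[F]$ to obtain a finite presentation over $R[F]$; the content of Lemma \ref{Lemma_DifferentLocalizationsForFrobSkewRing} is exactly that this operation respects the $F$-action. Quasi-compactness of $X$ lets us perform this finitely many times and glue. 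Assembling these pieces with the Quillen-localization fiber sequence and dévissage yields the asserted homotopy fiber sequence (\ref{lAM3}).
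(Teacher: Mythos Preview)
Your proof follows essentially the same route as the paper's: Quillen localization for the Serre inclusion of Lemma~\ref{Lemma_OXFIsAbelianCategory}(2), identification of the quotient with $\mathsf{Coh}_{\mathcal{O}_U[F]}(U)$ via $j^*$ using Lemma~\ref{Lemma_DifferentLocalizationsForFrobSkewRing}, and d\'evissage (Lemma~\ref{lemma_DevissageForOXF}) on the left. For essential surjectivity the paper does exactly the extension-by-clearing-denominators you sketch at the end, organized via a finite affine cover as in the classical extension result for finitely presented $\mathcal{O}_X$-modules.

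One caution: your intermediate assertion that ``any quasi-coherent right $\mathcal{O}_{X}[F]$-module sheaf is the filtered union of its locally finitely presented $\mathcal{O}_{X}[F]$-submodules'' is not a general fact about right coherent rings. Over the coherent ring $A=k[x_1,x_2,\ldots]$, the cyclic module $A/(x_1,x_2,\ldots)$ is not finitely presented and has no nonzero proper submodules, so it is not a union of finitely presented submodules. Fortunately you do not need this general statement: your affine-local clearing-denominators step already produces, directly, a single finitely presented $\mathcal{F}\subseteq j_*\mathcal{G}$ with $j^*\mathcal{F}\cong\mathcal{G}$, and that is all essential surjectivity requires. The paper likewise bypasses any such general claim and works concretely with a finite cover, so once you drop the overreaching sentence the two arguments coincide.
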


\begin{proof}
This is of course just the Frobenius skew ring analogue of Quillen's
localization theorem \cite[Theorem 5]{MR0338129}. In Quillen's setup, he uses
that the coherent sheaves of $\mathcal{O}_{X}$-modules, $\mathsf{Coh}%
_{\mathcal{O}_{X},Z}(X)$, form a Serre subcategory of the abelian category
$\mathsf{Coh}_{\mathcal{O}_{X}}(X)$. By Gabriel's result, Prop.
\ref{Prop_GabrielLocalization}, the pullback along $j$ induces an equivalence
of abelian categories%
\[
j^{\ast}:\mathsf{Coh}_{\mathcal{O}_{X}}(X)/\mathsf{Coh}_{\mathcal{O}_{X}%
,Z}(X)\overset{\sim}{\longrightarrow}\mathsf{Coh}_{U}(U)\text{.}%
\]
The functor $j^{\ast}$ is exact since it is a localization and thus $(j^{\ast
}\mathcal{O}_{X})(V)$ is always a flat $\mathcal{O}_{X}(V)$-module for every
open $V\subseteq X$. Thus, in order to prove our claim above, we just need to
adapt Quillen's proof for the modules over the sheaf $\mathcal{O}_{X}[F]$
instead of $\mathcal{O}_{X}$. Let us follow this path step by step:\newline%
\textit{(Step 1)} By\ Lemma \ref{Lemma_OXFIsAbelianCategory} we again have a
Serre subcategory of an abelian category, so Quillen's localization theorem
\cite[Theorem 5]{MR0338129} for Serre subcategories in abelian categories
readily produces a homotopy fiber sequence%
\begin{equation}
K(\mathsf{Coh}_{\mathcal{O}_{X}[F],Z}(X))\overset{i_{\ast}}{\longrightarrow
}K(\mathsf{Coh}_{\mathcal{O}_{X}[F]}(X))\overset{q}{\longrightarrow
}K(\mathsf{Qu})\longrightarrow+1\text{,}\label{lAM2}%
\end{equation}
where $\mathsf{Qu}:=\mathsf{Coh}_{\mathcal{O}_{X}[F]}(X)/\mathsf{Coh}%
_{\mathcal{O}_{X}[F],Z}(X)$ denotes the quotient abelian category and $q$ the
canonical exact functor to the quotient category (see \cite[Ch. III, \S 1,
Prop. 1]{MR0232821} for the latter). In our situation, the functor%
\begin{equation}
j^{\ast}:\mathsf{Coh}_{\mathcal{O}_{X}[F]}(X)\longrightarrow\mathsf{Coh}%
_{\mathcal{O}_{U}[F]}(U)\label{lAM0}%
\end{equation}
is also exact because $(j^{\ast}\mathcal{O}_{X}[F])(V)$ is also always a flat
$\mathcal{O}_{X}[F](V)$-module thanks to Lemma
\ref{Lemma_DifferentLocalizationsForFrobSkewRing}. We claim that this functor
descends to an exact functor%
\begin{equation}
j^{\ast}:\mathsf{Qu}\longrightarrow\mathsf{Coh}_{\mathcal{O}_{U}%
[F]}(U)\text{.}\label{lAM1}%
\end{equation}
This is clear: We need to look at the underlying quasi-coherent $\mathcal{O}%
_{X}$-module sheaves. Quasi-coherent sheaves with support in $Z$ clearly go to
a zero object on $U$. Moreover, if a sheaf $\mathcal{F}\in\mathsf{Qu}$ lies in
the kernel of $j^{\ast}$, it also lies in the kernel of $j^{\ast}$ on the
level of the underlying quasi-coherent $\mathcal{O}_{X}$-module sheaves, so
again by Prop. \ref{Prop_GabrielLocalization}, $\mathcal{F}$ must have support
contained in $Z$. Hence, we obtain that $j^{\ast}$ in line \ref{lAM0} factors
over the quotient $\mathsf{Qu}$, giving line \ref{lAM1} (see \cite[Ch. III,
\S 1, Cor. 2]{MR0232821} for this factorization result). The functor $j^{\ast
}$ is also fully faithful since the latter reduces to the same property for
the underlying quasi-coherent $\mathcal{O}_{X}$-module sheaves.\newline%
\textit{(Step 2)}\ Next, we claim that $j^{\ast}:\mathsf{Qu}\rightarrow
\mathsf{Coh}_{\mathcal{O}_{U}[F]}(U)$ is essentially surjective. For this, one
can copy and adapt the standard proof that (on any quasi-separated and
quasi-compact scheme) every finitely presented $\mathcal{O}_{X}$-module sheaf
on $U$ has a finitely presented extension to $X$, being a pre-image under
$j^{\ast}$, \cite[Tag 01PD]{stacksproject}. Let $\mathcal{F}\in\mathsf{Coh}%
_{\mathcal{O}_{U}[F]}(U)$ be given. As our $X$ is even Noetherian, we can pick
a finite Zariski open cover $X=\bigcup_{i=1}^{\ell}V_{i}$. One can write
$X=U\cup\bigcup_{i=1}^{\ell}V_{i}$ and thus if one can successively extend
$\mathcal{F}$ from $U$ to $U\cup V_{1}$, and then to $U\cup V_{1}\cup V_{2}$
etc., our claim is proven. Thus, it suffices to deal with the case such that
$X=U\cup V$ with $V$ affine. For this problem, it suffices to extend
$\mathcal{F}$ from $U\cap V$ to $V$, since on $U\setminus(U\cap V)$ the sheaf
$\mathcal{F}$ is known and there are no glueing conditions that have to be
met. Moreover, we can reduce to the case where $U$ is affine, because we may
cover $U$ by such opens and if we solve the extension problem on these, we are
done. Now, as we have reduced the problem to an affine situation, we may cover
$U\cap V$ inside $V$ by a finite number of distinguished affine opens, say
$U\cap V=\bigcup_{j=1}^{k}D(f_{j})$, where $f_{j}$ are the functions whose
non-vanishing loci are the $D(f_{j})$. As $\mathcal{F}$ is known to be a
finitely generated right $\mathcal{O}_{U}[F]$-module by assumption, this
remains true on the opens $\iota_{j}:D(f_{j})\hookrightarrow U$. Now, by Lemma
\ref{Lemma_DifferentLocalizationsForFrobSkewRing} we have%
\begin{align*}
\mathcal{F}(D(f_{j}))  & =\mathcal{F}(U)\otimes_{\mathcal{O}_{U}%
[F]}\mathcal{O}_{D(f_{j})}[F]\\
& \cong\mathcal{F}(U)\otimes_{\mathcal{O}_{U}[F]}\mathcal{O}_{U}%
[F]\otimes_{\mathcal{O}_{U}}\mathcal{O}_{U}[\frac{1}{f_{j}}]\cong
\mathcal{F}(U)\otimes_{\mathcal{O}_{U}}\mathcal{O}_{U}[\frac{1}{f_{j}%
}]\text{.}%
\end{align*}
Thus, a finite set of generators will affine locally have the shape
$a_{i}\otimes\frac{1}{f_{j}^{r_{i}}}$ ($a_{i}\in\mathcal{F}(U)$, for a finite
index set $i\in I$), and so if we take the sub-module generated by the $a_{i}$
inside $\mathcal{F}(U)$, we get a finitely generated right $\mathcal{O}%
_{U}[F]$-module $\mathcal{G}_{j}\subseteq\mathcal{F}$ whose pullback to
$D(f_{j})$ satisfies $\iota_{j}^{\ast}\mathcal{G}_{j}\overset{\sim
}{\hookrightarrow}\iota_{j}^{\ast}\mathcal{F}$. In a similar way, one can lift
not just generators, but also a presentation, cf. \cite[Tag 01PD]%
{stacksproject}.\newline\textit{(Step 3)}\ Combining the previous steps, we
learn that the functor in line \ref{lAM1} is exact, fully faithful and
essentially surjective. Thus, it is an equivalence of categories. Thus, the
homotopy fiber sequence in line \ref{lAM2} transforms into%
\[
K(\mathsf{Coh}_{\mathcal{O}_{X}[F],Z}(X))\overset{i_{\ast}}{\longrightarrow
}K(\mathsf{Coh}_{\mathcal{O}_{X}[F]}(X))\overset{j^{\ast}}{\longrightarrow
}K(\mathsf{Coh}_{\mathcal{O}_{U}[F]}(U))\longrightarrow+1\text{.}%
\]
By our version of d\'{e}vissage, Lemma \ref{lemma_DevissageForOXF}, we arrive
at our claim in line \ref{lAM3}. This finishes the proof.
\end{proof}

The following is of course also entirely analogous to the corresponding
statement for the $K$-theory of coherent $\mathcal{O}_{X}$-module sheaves.

\begin{corollary}
\label{Cor_KFSheafOfSpectra_SatisfiesZariskiDescent}Let $X/\mathbf{F}_{q}$ be
an $F$-finite Noetherian separated scheme. The presheaf of spectra (see
\cite{MR2061851} for background),%
\[
KF(U):=K(\mathsf{Coh}_{\mathcal{O}_{U}[F]}(U))\text{,}%
\]
satisfies Zariski descent, and (equivalently) satisfies the Mayer--Vietoris
property for Zariski squares.
\end{corollary}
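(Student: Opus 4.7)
The plan is to prove the Mayer--Vietoris property for all Zariski squares directly, and to deduce Zariski descent from it by the standard Brown--Gersten/Thomason principle for presheaves of spectra on a Noetherian base. Thus fix an open $V\subseteq X$ together with a two-open Zariski cover $V=V_{1}\cup V_{2}$, and equip the closed complement $Z:=V\setminus V_{1}$ with its reduced induced scheme structure. Because $V_{1}\cup V_{2}=V$, the closed set $Z$ is entirely contained in $V_{2}$, and its open complement inside $V_{2}$ is exactly $V_{1}\cap V_{2}$.

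Next I would apply the localization fiber sequence of Proposition \ref{prop_KFLocalizationSequence} to the pairs $(V,Z)$ and $(V_{2},Z)$ simultaneously. Restriction along $j:V_{2}\hookrightarrow V$ sends $\mathsf{Coh}_{\mathcal{O}_{V}[F]}(V)$ to $\mathsf{Coh}_{\mathcal{O}_{V_{2}}[F]}(V_{2})$ and is compatible with the further restriction to $V_{1}$ and $V_{1}\cap V_{2}$, so one obtains a map of localization sequences. Invoking the d\'{e}vissage equivalence of Lemma \ref{lemma_DevissageForOXF}, which identifies both support-condition $K$-theories with $K(\mathsf{Coh}_{\mathcal{O}_{Z}[F]}(Z))$, produces the commutative diagram of fiber sequences
\[
\begin{array}{ccccc}
K(\mathsf{Coh}_{\mathcal{O}_{Z}[F]}(Z)) & \longrightarrow & KF(V) & \longrightarrow & KF(V_{1})\\
\big\downarrow & & \big\downarrow & & \big\downarrow\\
K(\mathsf{Coh}_{\mathcal{O}_{Z}[F]}(Z)) & \longrightarrow & KF(V_{2}) & \longrightarrow & KF(V_{1}\cap V_{2}),
\end{array}
\]
in which the two rightmost vertical arrows are the structural restriction maps of the presheaf $KF$.

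The crux is to check that the leftmost vertical arrow is, after the d\'{e}vissage identifications, the identity. This amounts to a naturality verification: the pushforward $i_{\ast}:\mathsf{Coh}_{\mathcal{O}_{Z}[F]}(Z)\hookrightarrow\mathsf{Coh}_{\mathcal{O}_{V}[F],Z}(V)$, followed by restriction along $j$, coincides with the analogous pushforward directly into $\mathsf{Coh}_{\mathcal{O}_{V_{2}}[F],Z}(V_{2})$, simply because $Z\subseteq V_{2}$ and thus a sheaf on $Z$ extended by zero to $V$ already lives on $V_{2}$ on the nose. Once the leftmost column is an equivalence, the standard principle that a map of fiber sequences inducing an equivalence on fibres forces the remaining square to be homotopy Cartesian delivers the Mayer--Vietoris property for $KF$, and the equivalence with Zariski descent is then formal. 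I expect the main obstacle to be nothing conceptually deep, but rather the bookkeeping of making the d\'{e}vissage and localization equivalences sufficiently natural in the open immersion $V_{2}\hookrightarrow V$ for the two localization sequences to honestly assemble into a map of fiber sequences in the first place.
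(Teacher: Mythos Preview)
Your proposal is correct and matches the paper's own proof essentially verbatim: both deduce Zariski descent from the Mayer--Vietoris property via Brown--Gersten/Thomason, and both obtain the latter by running the localization sequence of Proposition~\ref{prop_KFLocalizationSequence} for the two pairs $(V,Z)$ and $(V_{2},Z)$ and observing that the induced map on the $Z$-terms is an equivalence, forcing the remaining square to be homotopy bi-Cartesian. Your extra care about the naturality of the d\'{e}vissage identification is a point the paper glosses over with the phrase ``functorially induced from the pullback,'' but there is no substantive difference in strategy.
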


\begin{proof}
Satisfying\ Zariski descent is equivalent to the Mayer--Vietoris property
\cite[Ch. V, Theorem 10.2]{MR3076731} (the original results for this are due
to Brown and Gersten \cite{MR0347943}, based on Thomason's ideas). The latter
can be shown as in the case of ordinary $K$-theory and the presheaf of spectra
$U\mapsto K(\mathsf{Coh}_{\mathcal{O}_{X}}(U))$. For this, one just needs to
have a localization sequence, cf. \cite[Ch. V, Example 10.3]{MR3076731}: For
$X=U\cup V$ with $U,V\subseteq X$ open, define $Z:=X-U$ and one has the
closed-open complements $Z\hookrightarrow X\hookleftarrow U$ resp.
$Z\hookrightarrow V\hookleftarrow U\cap V$. Using the localization sequence of
Prop. \ref{prop_KFLocalizationSequence} for either, one obtains the two
homotopy fiber sequences%
\[%
\begin{array}
[c]{ccccccc}%
KF(Z) & \longrightarrow & KF(X) & \longrightarrow & K(U) & \longrightarrow &
+1\\
\parallel &  & \downarrow &  & \downarrow &  & \\
KF(Z) & \longrightarrow & KF(V) & \longrightarrow & KF(U\cap V) &
\longrightarrow & +1
\end{array}
\]
and the downward arrows are functorially induced from the pullback along the
open immersion $V\hookrightarrow X$. As the left downward arrow is an
equivalence, it follows that the square is homotopy bi-Cartesian. This is the
required Mayer--Vietoris property. This proof is an exact copy of the
classical one, just using the Frobenius variant of $K$-theory and the
respective localization sequence.
\end{proof}

\subsubsection{Variation: Frobenius vector
bundles\label{subsect_FrobeniusVectBundles}}

$K$-theory is usually defined on the basis of vector bundles or perfect
complexes. A similar treatment is also conceivable in the present situation.

\begin{lemma}
Let $X/\mathbf{F}_{q}$ be an $F$-finite Noetherian regular separated scheme.
Then%
\[
K(\left.  _{\mathcal{O}_{X}[F]}\mathsf{P}\right.  (X))\cong K(\left.
\mathsf{P}_{\mathcal{O}_{X}[F]}\right.  (X))\cong K(\mathsf{Coh}%
_{\mathcal{O}_{X}[F]}(X))\text{,}%
\]
where $\left.  \mathsf{P}_{\mathcal{O}_{X}[F]}\right.  $ (resp. $\left.
_{\mathcal{O}_{X}[F]}\mathsf{P}\right.  $) denotes the exact categories of
finitely generated projective right (resp. left) $\mathcal{O}_{X}[F]$-module sheaves.
\end{lemma}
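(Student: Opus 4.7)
\emph{Left--right equivalence.} I would deduce the leftmost isomorphism from the fact that dualization into the coefficient ring, $\mathcal{P}\mapsto \operatorname{\mathcal{H}om}_{\mathcal{O}_X[F]}(\mathcal{P},\mathcal{O}_X[F])$, is an exact anti-equivalence between finitely generated projective right and left $\mathcal{O}_X[F]$-module sheaves. This can be checked affine-locally, where it is the standard fact that for any ring $A$ the functor $P\mapsto \operatorname{Hom}_A(P,A)$ interchanges f.g.\ projective right and left $A$-modules, with biduality providing the inverse. The resulting anti-equivalence induces an equivalence in Quillen $K$-theory because $K(\mathcal{E})\simeq K(\mathcal{E}^{\mathrm{op}})$ for every exact category (Quillen's $Q$-construction is self-dual in that $Q(\mathcal{E})=Q(\mathcal{E}^{\mathrm{op}})^{\mathrm{op}}$, and $BC\simeq BC^{\mathrm{op}}$ for every small category $C$).

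\emph{Projective versus coherent, affine case.} On an affine open $U=\operatorname{Spec}(R)\subseteq X$, regularity of $R$ combined with Proposition~\ref{prop_RightSuperCoherence} shows that $R[F]$ is right coherent, and Proposition~\ref{prop_GlobalRightDimOfFrobeniusSkewRing} (with $s=0$) shows that it has finite right global dimension. Every finitely presented right $R[F]$-module therefore admits a finite resolution by finitely generated projective right $R[F]$-modules: right coherence keeps all successive syzygies finitely presented, and the bound on the global dimension forces the resolution to terminate with a projective kernel after finitely many steps. Quillen's resolution theorem now yields $K(\mathsf{P}_{R[F]})\simeq K(\mathsf{Mod}_{fp}(R[F]))$, which unwinds to the affine case of the desired equivalence.

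\emph{Globalization.} To promote the affine statement to $X$, I would use Zariski descent. By Corollary~\ref{Cor_KFSheafOfSpectra_SatisfiesZariskiDescent} the presheaf $U\mapsto K(\mathsf{Coh}_{\mathcal{O}_U[F]}(U))$ satisfies Mayer--Vietoris, and the parallel localization-sequence argument of Proposition~\ref{prop_KFLocalizationSequence} can be run for the projective side (possibly after passing to nonconnective $K$-theory to enforce descent). Alternatively, one may insert an intermediate category of perfect complexes of right $\mathcal{O}_X[F]$-modules, verifying by Gillet--Waldhausen that on affines $K(\mathsf{Perf})\simeq K(\mathsf{P}_{R[F]})$ while globally $K(\mathsf{Perf})\simeq K(\mathsf{Coh}_{\mathcal{O}_X[F]}(X))$ via the finite global dimension. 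Either route reduces the global comparison to the affine statement already proved, and a standard Mayer--Vietoris induction on the cardinality of a finite affine cover of the Noetherian scheme $X$ concludes.

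\emph{Main obstacle.} The delicate point is precisely this globalization step: on a regular Noetherian separated scheme with no ampleness or quasi-projectivity assumption, there is no a priori global resolution property---a coherent $\mathcal{O}_X[F]$-module sheaf need not admit a finite \emph{global} resolution by f.g.\ projective $\mathcal{O}_X[F]$-module sheaves---so one cannot apply Quillen's resolution theorem directly to the inclusion $\mathsf{P}_{\mathcal{O}_X[F]}(X)\hookrightarrow \mathsf{Coh}_{\mathcal{O}_X[F]}(X)$. Descent is the substitute, and the care lies in ensuring that the connective $K$-theory of the projective side enjoys the same Zariski Mayer--Vietoris property as the $G$-theoretic side, an issue whose classical analogue was resolved by Thomason--Trobaugh and which must be imported cleanly into the Frobenius-skew setting.
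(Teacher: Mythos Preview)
Your two core moves coincide with the paper's proof: the left--right comparison via the $\mathcal{H}om$-duality anti-equivalence (and the observation that $K(\mathcal{E})\simeq K(\mathcal{E}^{\mathrm{op}})$), and the projective-versus-coherent comparison via right coherence (Prop.~\ref{prop_RightSuperCoherence}) together with the finite right global dimension (Prop.~\ref{prop_GlobalRightDimOfFrobeniusSkewRing}). The paper in fact invokes exactly these two ingredients and nothing else; in particular it cites the comparison $K(\mathsf{P}_f(A))\simeq K(\mathsf{Mod}_{fp}(A))$ for right coherent, right regular $A$ (Theorem~\ref{thm_GerstenCompareProjectivesWithFinPres}), which is precisely your affine resolution-theorem step.

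Where you differ is in globalization. The paper's proof is terse to the point of not addressing it: it simply appeals to the right regularity and right coherence ``of the ring $\mathcal{O}_X[F](X)$'' and declares the right-hand equivalence, without discussing how one passes from affines to a separated regular $X$ with no ampleness hypothesis. You are right that a direct application of the resolution theorem to $\mathsf{P}_{\mathcal{O}_X[F]}(X)\hookrightarrow\mathsf{Coh}_{\mathcal{O}_X[F]}(X)$ would require a global resolution property that is not available in this generality, and that descent (or an intermediate perfect-complex category \`a la Thomason--Trobaugh) is the honest substitute. In other words, your ``main obstacle'' paragraph identifies a genuine issue that the paper elides; your proposed remedy via Zariski Mayer--Vietoris (already available for the coherent side by Corollary~\ref{Cor_KFSheafOfSpectra_SatisfiesZariskiDescent}) is the natural one, and the paper's surrounding text even gestures in this direction (``It should also be possible to prove a localization sequence in this setup\ldots\ We will not pursue this further'').
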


\begin{proof}
The right regularity of the ring $\mathcal{O}_{X}[F](X)$, implied by the
uniform upper bound on projective dimension of Prop.
\ref{prop_GlobalRightDimOfFrobeniusSkewRing}, and its right coherence, Prop.
\ref{prop_RightSuperCoherence}, implies the right-hand side equivalence. Next,
we recall that for any arbitrary ring $A$, every finitely generated (right)
$A$-module is reflexive, i.e. the duality functor%
\[
(-)^{\vee}:\left.  \mathsf{P}_{\mathcal{O}_{X}[F]}\right.  (X)\rightarrow
\left.  _{\mathcal{O}_{X}[F]}\mathsf{P}\right.  (X)\text{,}\qquad M\mapsto
M^{\vee}:=\operatorname*{Hom}\nolimits_{A}(M,A)
\]
is an equivalence because $\left.  _{\mathcal{O}_{X}[F]}\mathsf{P}\right.
(X)$ has a corresponding duality functor, and double dualization either way
tautologically is an equivalence, thanks to the reflexivity of all objects in
the category. This induces an anti-equivalence between the left and right
module categories, and this induces an equivalence of their $K$-theories.
\end{proof}

\begin{remark}
I do not see any reason why one should expect that the $K$-theory of coherent
\emph{left} $\mathcal{O}_{X}[F]$-modules $\left.  _{\mathcal{O}_{X}%
[F]}\mathsf{Coh}(X)\right.  $ agrees with the above $K$-theories as well,
apart from aesthetics perhaps.
\end{remark}

It should also be possible to prove a localization sequence in this setup,
even for $F$-finite schemes which are not regular. The papers of Grayson
\cite{MR585217} and Weibel--Yao \cite{MR1156514}\ discuss a suitable
non-commutative localization theorem \textit{for rings}, i.e. in our context
this only helps us for the affine case.\ However, their constructions are
entirely functorial in the ring and the multiplicative subset. With a little
work, one can sheafify these localization results.

As for $\mathcal{O}_{X}$-modules, there is no counterpart of d\'{e}vissage if
one doesn't have a comparison to coherent sheaves available. We will not
pursue this further.

\subsection{Frobenius line invariance of $K$-theory}

A fundamental fact about algebraic $K$-theory is its $\mathbf{A}^{1}%
$-invariance, namely:

\begin{unnumthmrmk}
[Bass--Quillen]Suppose $X$ is a Noetherian regular separated scheme. Then the
pullback along the projection $\pi:X\times\mathbf{A}^{1}\rightarrow X$ induces
an equivalence%
\[
K(X)\overset{\sim}{\longrightarrow}K(X\times\mathbf{A}^{1})\text{.}%
\]
For the $K$-theory of coherent sheaves this remains true without the
assumption that $X$ be regular.
\end{unnumthmrmk}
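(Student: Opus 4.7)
The plan is to follow Quillen's original argument, which fits squarely into the template of Theorem \ref{thm_Quillen}. I would handle the affine case first and then globalize by Zariski descent, in direct parallel to the path the paper is about to follow for the Frobenius skew ring.

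\textbf{Affine case.} Let $X = \operatorname{Spec}(R)$ with $R$ commutative Noetherian. Filter $A := R[T]$ by degree in $T$, so that $A^{\leq 0} = R$ and the associated graded $\operatorname{Gr} A$ is canonically isomorphic to $R[T]$ itself. Since $R$ is Noetherian, Hilbert's basis theorem makes $\operatorname{Gr} A$ right Noetherian; it is free, hence of Tor-dimension zero, as a right $R$-module; and $R \cong R[T]/(T)$ admits the length-one free resolution
\[
0 \longrightarrow R[T] \xrightarrow{\cdot T} R[T] \longrightarrow R \longrightarrow 0
\]
as a right $R[T]$-module, so has Tor-dimension $\leq 1$ over $\operatorname{Gr} A$. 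Theorem \ref{thm_Quillen} therefore yields $K(\mathsf{Mod}_{fg} R) \overset{\sim}{\longrightarrow} K(\mathsf{Mod}_{fg} R[T])$, which is already the second ($G$-theoretic) claim. For the first claim, use that regularity of $R$ passes to $R[T]$ (Hilbert's syzygy theorem), and then apply the comparisons of line \ref{ltad1} — i.e.\ Quillen's resolution theorem — at both ends to upgrade to $K(R) \overset{\sim}{\to} K(R[T])$.

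\textbf{Globalization.} For a general Noetherian regular separated scheme $X$, I would pass from affines to $X$ by Zariski descent, in exact analogy with Corollary \ref{Cor_KFSheafOfSpectra_SatisfiesZariskiDescent}. Both presheaves of spectra $U \mapsto K(U)$ and $U \mapsto K(U \times \mathbf{A}^{1})$ satisfy the Mayer--Vietoris property on the Zariski site of $X$: the source by Thomason--Trobaugh (or directly from Quillen's localization theorem), and the target because $(V \cap W) \times \mathbf{A}^{1} = (V \times \mathbf{A}^{1}) \cap (W \times \mathbf{A}^{1})$, so Mayer--Vietoris for $X$ lifts to Mayer--Vietoris for $X \times \mathbf{A}^{1}$. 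The pullback $\pi^{\ast}$ is a natural transformation between these two presheaves; by the affine case it is a pointwise equivalence on any affine open of $X$, so induction on the size of a finite affine cover (the base step being the affine case, the inductive step being Mayer--Vietoris applied to both sides of the natural transformation) promotes $\pi^{\ast}$ to a global equivalence. The $G$-theoretic version needs no regularity since Quillen's localization sequence for coherent-sheaf $K$-theory holds in general.

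\textbf{Main obstacle.} Honestly, no step here is substantive --- this is exactly why Quillen's proof is regarded as the benchmark. The only mildly technical point is the verification of the Tor-dimension hypotheses in Theorem \ref{thm_Quillen}, but both are immediate from freeness and from the resolution displayed above. The entire raison d'être of the work in \S\ref{sect_CoherenceTheorems} (and Ma's regularity bound, Proposition \ref{prop_GlobalRightDimOfFrobeniusSkewRing}) is that this painless proof breaks down catastrophically for $R[F]$: the graded ring $\operatorname{Gr} R[F] \cong R[F]$ essentially never meets the Noetherian hypothesis of Theorem \ref{thm_Quillen}, and one must replace both the filtration theorem and the projective-vs.-finitely-generated comparison of line \ref{ltad1} by their coherent-ring counterparts due to Gersten.
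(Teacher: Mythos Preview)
The paper does not prove this statement; it simply cites \cite[Theorem 8 and Corollary]{MR0338129} and attributes the low-degree cases to Bass. Your proposal is correct and is essentially Quillen's argument: the affine step via Theorem~\ref{thm_Quillen} is exactly the one Quillen gives, and your verification of the Tor-dimension hypotheses is fine. One minor point of comparison: Quillen's own globalization (the one the paper later imitates in Version~II of the proof of Theorem~\ref{thm_TwistedA1InvarianceOfKTheory}) proceeds by induction on $\dim X$ using the localization sequence and a filtering colimit over closed subschemes, rather than by Mayer--Vietoris; but as the paper itself remarks at the end of that proof, the two globalization arguments are technically equivalent, so your route is perfectly acceptable.
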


This is \cite[Theorem 8 and Corollary]{MR0338129}. For $K_{0},K_{1}$ it is due
to Bass. Actually, this can be generalized to arbitrary affine fibrations
$\pi$, e.g. vector bundles. However, for us the above formulation is the
relevant one.

We shall prove:

\begin{theorem}
\label{thm_TwistedA1InvarianceOfKTheory}Let $X/\mathbf{F}_{q}$ be a Noetherian
separated $F$-finite regular scheme. Then there is an equivalence in
$K$-theory%
\[
\Psi:K(\mathsf{Coh}(X))\overset{\sim}{\longrightarrow}K(\mathsf{Coh}%
_{\mathcal{O}_{X}[F]}(X))\text{,}%
\]
induced by right tensoring $\mathcal{F}\mapsto\mathcal{F}\otimes
_{\mathcal{O}_{X}}\mathcal{O}_{X}[F]$.
\end{theorem}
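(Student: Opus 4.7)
The plan is to follow the strategy laid out in the introduction. Mimic Quillen's proof of $\mathbf{A}^{1}$-invariance, using the natural degree filtration of $R[F]$, but replace Quillen's filtration theorem \ref{thm_Quillen} by Gersten's coherent-ring analogue \cite{MR0396671}. All the non-Noetherian subtleties have been dealt with in the preceding sections, so what remains is essentially mechanical.

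First I would reduce to the affine case by Zariski descent. The target presheaf $U \mapsto K(\mathsf{Coh}_{\mathcal{O}_{U}[F]}(U))$ satisfies Mayer--Vietoris by Corollary \ref{Cor_KFSheafOfSpectra_SatisfiesZariskiDescent}, while $U \mapsto K(\mathsf{Coh}(U))$ does so classically on Noetherian regular separated schemes. The functor $\Psi$ is natural in open immersions $j: V \hookrightarrow X$, since by Lemma \ref{Lemma_DifferentLocalizationsForFrobSkewRing} one has $j^{\ast}(\mathcal{F} \otimes_{\mathcal{O}_{X}} \mathcal{O}_{X}[F]) \cong j^{\ast}\mathcal{F} \otimes_{\mathcal{O}_{V}} \mathcal{O}_{V}[F]$. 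Comparing the two Mayer--Vietoris squares for $X = U \cup V$ and inducting on the size of a finite affine open cover (using that $X$ is separated, so intersections of affines remain affine), one reduces to the affine case: for a Noetherian $F$-finite regular commutative $\mathbf{F}_{q}$-algebra $R$, extension of scalars induces an equivalence $K(\mathsf{Mod}_{fp} R) \overset{\sim}{\longrightarrow} K(\mathsf{Mod}_{fp} R[F])$.

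In the affine case, filter $A := R[F]$ by degree, $A^{\leq d} := \{ \sum_{i=0}^{d} r_{i} F^{i} \}$, with $A^{\leq 0} = R$ and $\operatorname{Gr} A \cong R[F]$, as already noted in the Strategy section. I would then check the hypotheses of Gersten's coherent version of Theorem \ref{thm_Quillen}: (i) $\operatorname{Gr} A = R[F]$ is right stably coherent by Proposition \ref{prop_RightSuperCoherence}; (ii) as a right $A^{\leq 0} = R$-module, $\operatorname{Gr} A = \bigoplus_{d \geq 0} R \cdot F^{d}$ is a direct sum of Frobenius twists of $R$, and each such twist is flat because $R$ is $F$-flat by Kunz's theorem (since $R$ is reduced Noetherian regular), so in particular $\operatorname{Gr} A$ has Tor-dimension zero as a right $R$-module; (iii) $R \cong A/AF$ has finite Tor-dimension as a right $R[F]$-module because, by Proposition \ref{prop_GlobalRightDimOfFrobeniusSkewRing}, the ring $R[F]$ has finite right global dimension (bounded by $\operatorname{gldim}(R) + 1$), so every right $R[F]$-module, in particular $R$, admits a bounded projective resolution. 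Gersten's theorem then produces $K(\mathsf{Mod}_{fp} R) \overset{\sim}{\longrightarrow} K(\mathsf{Mod}_{fp} R[F])$, induced by extension of scalars $M \mapsto M \otimes_{R} R[F]$ along $R \hookrightarrow R[F]$, which is precisely $\Psi$ after identifying $\mathsf{Coh}(\operatorname{Spec} R) = \mathsf{Mod}_{fp} R$ and $\mathsf{Coh}_{\mathcal{O}_{X}[F]}(\operatorname{Spec} R) = \mathsf{Mod}_{fp} R[F]$.

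The main obstacle is step two: verifying that the hypotheses of Gersten's theorem really hold in a setting where $\operatorname{Gr} R[F]$ is not right Noetherian. The non-Noetherianness is exactly what blocks a direct appeal to Quillen's Theorem \ref{thm_Quillen}, and the entire machinery built up in the preceding sections --- right stable coherence (Proposition \ref{prop_RightSuperCoherence}), finite right global dimension (Proposition \ref{prop_GlobalRightDimOfFrobeniusSkewRing}), and the sheaf-theoretic localization and descent of Proposition \ref{prop_KFLocalizationSequence} and Corollary \ref{Cor_KFSheafOfSpectra_SatisfiesZariskiDescent} --- exists precisely to supply the substitutes that make Gersten's variant applicable. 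Once those are in place, the comparison between finitely presented modules and finitely generated projectives on either side is a standard application of the resolution theorem, and the proof is essentially forced.
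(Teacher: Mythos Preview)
Your proposal is correct and follows the paper's proof: reduce to the affine case via Zariski descent (the paper's ``Version I'' of Step 2), then apply Gersten's strengthening of Quillen's filtration theorem with the two key inputs being right stable coherence (Proposition \ref{prop_RightSuperCoherence}) and finite right global dimension (Proposition \ref{prop_GlobalRightDimOfFrobeniusSkewRing}).

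One small imprecision worth flagging: the hypotheses of Gersten's theorem, as stated in the paper (Theorem \ref{thm_GerstenVersionOfQuillenThm}), are \emph{not} a coherent analogue of Quillen's Tor-dimension conditions; Gersten requires only that the polynomial ring $A[T]=R[F][T]$ be right coherent and right regular. Thus your condition (ii) is superfluous, and (iii) should be phrased as right regularity of $R[F][T]$ (which Proposition \ref{prop_GlobalRightDimOfFrobeniusSkewRing} indeed supplies) rather than as a Tor-dimension condition on $R$ over $R[F]$. The right ingredients are all present; only the packaging of Gersten's hypotheses needs tightening.
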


\begin{philosophy}
Recall that over the base field $\mathbf{F}_{p}$ of characteristic $p$, the
Frobenius skew ring $\mathcal{O}_{X}[F]$ is defined by the relation
$r^{p}F=Fr$ for all sections $r$. If we were so bold to view a `field with one
element' as part of this family of fields, this relation would become $rF=Fr$,
i.e. $F$ would just be an ordinary commuting variable and thus $\mathcal{O}%
_{X}[T]$ just the ordinary polynomial ring. The above Frobenius line
invariance becomes ordinary $\mathbf{A}^{1}$-invariance. This aspect is too
magical to remain unsaid \cite{MR2074990}.\newline For perfect reduced rings,
where the Frobenius is an automorphism, Grayson uses a Frobenius-twisted
projective line in his article \cite{MR929766}, which serves an analogous purpose.
\end{philosophy}

To prove this, we use a particularly general version of Quillen's Theorem,
Theorem \ref{thm_Quillen}, which is due to Gersten \cite{MR0396671}. Gersten
managed to remove the Noetherian hypothesis and replace it by a condition on
the coherence of the polynomial ring:

\begin{unnumthmrmk}
[{\cite[\S 3, Theorem 3.1]{MR0396671}}]\label{thm_GerstenVersionOfQuillenThm}%
Let $A$ be an increasingly filtered ring%
\[
A=\bigcup_{s\geq0}A^{\leq s}%
\]
such that $A^{\leq s}\cdot A^{\leq t}\subseteq A^{\leq s+t}$. Suppose that the
polynomial ring $A[T]$ is right coherent and right regular. Then the ring
homomorphism $A^{\leq0}\hookrightarrow A$ induces an equivalence in $K$-theory%
\[
K(\mathsf{P}_{f}(A^{\leq0}))\overset{\sim}{\longrightarrow}K(\mathsf{P}%
_{f}(A))\text{.}%
\]

\end{unnumthmrmk}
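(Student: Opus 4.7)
The plan is to imitate Quillen's proof of Theorem~\ref{thm_Quillen}, substituting the right coherence and right regularity of $A[T]$ for the Noetherian hypothesis on $\operatorname{Gr}A$. The central technical device is the Rees algebra
\[
\widetilde{A} := \bigoplus_{s \geq 0} A^{\leq s}\, T^{s} \subseteq A[T]
\]
with central indeterminate $T$ in degree $1$. Three features are crucial: $\widetilde{A}$ is non-negatively graded with degree-zero piece $\widetilde{A}_{0} = A^{\leq 0}$; the central localization $\widetilde{A}[T^{-1}]$ recovers $A[T, T^{-1}]$; and the specialization $\widetilde{A}/(T-1) \cong A$ collapses the grading to $A$ itself.

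First I would transfer coherence and regularity from $A[T]$ down to $\widetilde{A}$. Every finitely generated right ideal of $\widetilde{A}$ may be analyzed by pushing it forward to $A[T]$ and then using the grading of $\widetilde{A}$ to cut the result back in finitely many pieces; regularity descends because the filtration gives uniform bounds on projective dimensions. This is the step where the hypothesis genuinely does its work: it validates the comparison $K(\mathsf{P}_{f}(\widetilde{A})) \simeq K(\mathsf{Mod}_{fp}(\widetilde{A}))$ by Gersten-style resolution, and likewise for $A^{\leq 0}$, which inherits coherence and regularity from $A[T]$ after taking degree-zero parts. Then I would exploit the non-negative grading: the adjunction sending a finitely generated projective $A^{\leq 0}$-module $N$ to $N \otimes_{A^{\leq 0}} \widetilde{A}$, and a finitely generated graded projective $\widetilde{A}$-module $P$ back to $P/\widetilde{A}_{>0}P$, should be upgraded to an equivalence in $K$-theory by a filtration and additivity argument, giving a graded analogue of $\mathbf{A}^{1}$-invariance.

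Finally, I would pass from $\widetilde{A}$ to $A$ by a localization: the localization sequence associated to inverting the central element $T$ produces $K(\widetilde{A}) \to K(A[T, T^{-1}])$ with homotopy fibre given by $T$-torsion modules, which a dévissage along the $T$-adic filtration reduces to the degree-zero layer. A Bass-type fundamental theorem applied to the right coherent, right regular ring $A[T]$ then identifies $K(\widetilde{A}) \simeq K(A)$, and chaining the equivalences yields $K(\mathsf{P}_{f}(A^{\leq 0})) \simeq K(\mathsf{P}_{f}(A))$. The main obstacle is the descent step: without ambient Noetherianity one cannot expect either that $\mathsf{Mod}_{fp}(\widetilde{A})$ is abelian or that the Tor-dimensions that appear remain finite. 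Keeping this categorical machinery intact is precisely the point at which Gersten's setup diverges from Quillen's, and it is also the reason the hypothesis is formulated on the polynomial ring $A[T]$ rather than on $A$ or $\operatorname{Gr}A$ individually.
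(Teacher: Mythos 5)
First, a point of order: the paper does not prove this statement. It is quoted as a black box from Gersten \cite{MR0396671}, and the text explicitly defers to that paper for the proof; so your sketch can only be measured against Gersten's published argument, which is a coherent-ring adaptation of Quillen's proof of Theorem \ref{thm_Quillen}. Your scaffolding --- the Rees ring $\widetilde{A}=\bigoplus_{s\geq 0}A^{\leq s}T^{s}$, a graded-module computation reducing to $A^{\leq 0}$, and localization at the central degree-one element $T$ --- is the right shape for that argument.

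The sketch nevertheless has genuine gaps exactly where the content lives. First, the localization step conflates graded and ungraded categories. The correct move is to localize the category of finitely presented \emph{graded} $\widetilde{A}$-modules at $T$; the quotient is graded modules over $\widetilde{A}[T^{-1}]=A[T,T^{-1}]$, and because $T$ is an invertible central element of degree one this category is equivalent to ungraded $A$-modules. That is how $A$ enters --- not via $\widetilde{A}/(T-1)$, and not via a ``Bass-type fundamental theorem.'' As written, your final step is a non sequitur: the ungraded sequence $K(\widetilde{A})\to K(A[T,T^{-1}])$ never sees $A^{\leq 0}$, and the fundamental theorem for coherent regular rings is itself a consequence of, not an input to, this circle of ideas. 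Second, the $T$-torsion part of the localization sequence d\'evissages to graded modules over $\widetilde{A}/T\widetilde{A}\cong\operatorname{Gr}A$, not to ``the degree-zero layer''; one then needs the graded-module theorem (Quillen's Theorem 6 in its coherent incarnation) for both $\widetilde{A}$ and $\operatorname{Gr}A$, exhibiting their graded $K$-theories as modules over $\mathbf{Z}[\sigma]$, and the resulting comparison of exact sequences of $\mathbf{Z}[\sigma]$-modules is the actual heart of the proof; your sketch omits it entirely. Third, the claim that right coherence and regularity ``transfer from $A[T]$ down to $\widetilde{A}$'' by pushing ideals into $A[T]$ and cutting back is asserted, not proved: $\widetilde{A}$ is a subring, not a quotient, of $A[T]$, and coherence does not pass to subrings; making this transfer precise is exactly why the hypothesis is placed on $A[T]$ rather than on $A$ or $\operatorname{Gr}A$, so it cannot be waved through. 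None of this is unfixable, but as it stands the proposal names the standard route without carrying out any of the three steps that bear the weight.
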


Moreover:

\begin{unnumthmrmk}
[{\cite[\S 2, Theorem 2.3]{MR0396671}}]%
\label{thm_GerstenCompareProjectivesWithFinPres}Suppose $A$ is a right
coherent and right regular ring. Then the exact functor%
\[
\mathsf{P}_{f}(A)\longrightarrow\mathsf{Mod}_{fp}(A)
\]
induces an equivalence in $K$-theory.
\end{unnumthmrmk}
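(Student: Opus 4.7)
The plan is to invoke Quillen's resolution theorem (\cite[\S 4, Corollary 1]{MR0338129}) applied to the inclusion $\mathsf{P}_{f}(A)\hookrightarrow\mathsf{Mod}_{fp}(A)$. By Proposition \ref{prop_CharacterizationsOfRightCoherentRings}, right coherence of $A$ ensures that $\mathsf{Mod}_{fp}(A)$ is an abelian category, and by Proposition \ref{prop_PropsAroundCoherence}(1) every finitely generated projective right $A$-module is coherent, in particular finitely presented, so $\mathsf{P}_{f}(A)$ embeds as a full subcategory. It is closed under extensions in $\mathsf{Mod}_{fp}(A)$ since short exact sequences ending in a projective split.

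To apply the resolution theorem, I need two things: (i) every $M\in\mathsf{Mod}_{fp}(A)$ admits a finite resolution by objects of $\mathsf{P}_{f}(A)$, and (ii) $\mathsf{P}_{f}(A)$ is closed in $\mathsf{Mod}_{fp}(A)$ under taking kernels of admissible epimorphisms that land in $\mathsf{P}_{f}(A)$. Condition (ii) is immediate: any surjection onto a finitely generated projective splits, so the kernel is a direct summand of a finitely generated projective and therefore belongs to $\mathsf{P}_{f}(A)$.

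For (i), I build a resolution $\cdots\to P_{1}\to P_{0}\to M\to 0$ step by step, choosing at each stage a surjection from a finitely generated free right $A$-module onto the current syzygy. The essential point is that all syzygies remain finitely presented: if $P_{i}\twoheadrightarrow K_{i}$ is a surjection from a finitely generated projective (hence coherent) module onto a finitely presented module, then by the two-out-of-three property for coherent modules in short exact sequences the kernel $K_{i+1}$ is again coherent, hence finitely presented. Since $A$ is right regular, $M$ has finite projective dimension $n$, and a standard argument shows that for any projective resolution the $n$-th syzygy is already projective. Thus $K_{n}$ is simultaneously finitely presented and projective, i.e.\ in $\mathsf{P}_{f}(A)$, and the resolution terminates.

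The main subtlety is the interplay between coherence and regularity in the non-Noetherian setting. Without right coherence the iterated syzygies of a finitely presented module need not stay finitely presented, so one could not even speak of finite projective resolutions by \emph{finitely generated} projectives; without right regularity (taken here in its non-Noetherian form, applied to finitely presented modules) the resolution has no reason to terminate. Both hypotheses therefore enter in an essential way in step (i), and once (i) and (ii) are checked, Quillen's resolution theorem delivers the $K$-theoretic equivalence $K(\mathsf{P}_{f}(A))\overset{\sim}{\longrightarrow}K(\mathsf{Mod}_{fp}(A))$.
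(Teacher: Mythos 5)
Your argument is correct. The paper does not reproduce a proof of this statement at all — it simply cites Gersten and says ``we refer to Gersten's paper for the proofs'' — and your route via Quillen's resolution theorem is precisely the standard one (and essentially Gersten's own): coherence keeps the syzygies of a finitely presented module inside $\mathsf{Mod}_{fp}(A)$, right regularity in the finitely-presented sense terminates the resolution, and the closure conditions for $\mathsf{P}_{f}(A)$ hold because surjections onto projectives split.
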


To make sense of the functor, note that the right coherence of $A$ implies
that every finitely generated projective right module is actually coherent
(Prop. \ref{prop_PropsAroundCoherence}). We refer to Gersten's paper for the proofs.

\begin{proof}
[Proof of Theorem \ref{thm_TwistedA1InvarianceOfKTheory}]\textit{(Step 1)} We
first deal with the special case that $X=\operatorname*{Spec}R $ is affine.
Then we have equivalences of abelian categories%
\begin{equation}
K(\mathsf{Coh}(X))\overset{\sim}{\longrightarrow}K(\mathsf{Mod}_{fp}%
(R))\text{,}\qquad K(\mathsf{Coh}_{\mathcal{O}_{X}[F]}(X))\overset{\sim
}{\longrightarrow}K(\mathsf{Mod}_{fp}(R[F]))\text{.}\label{lvia1}%
\end{equation}
On the left-hand side, it does not matter whether we deal with finitely
generated or finitely presented modules since $R$ is Noetherian by assumption.
Clearly the skew Frobenius ring is filtered%
\[
R[F]^{\leq d}=\left\{  \left.  \sum\nolimits_{i=0}^{d}r_{i}F^{i}\right\vert
r_{i}\in R\right\}  \qquad\text{with}\qquad R[F]^{\leq0}\cong R\text{.}%
\]
By Prop. \ref{prop_RightSuperCoherence} the polynomial ring $R[F][T]$ is right
coherent (this is the crucial bit why just proving right coherence for $R[F]$
would not have sufficed!) and by Prop.
\ref{prop_GlobalRightDimOfFrobeniusSkewRing} the regularity of $R$ (which
since $R$ is Noetherian commutative by Serre implies that its global dimension
agrees with its Krull dimension and is finite, \cite[(5.94)\ Theorem]%
{MR1653294}) implies the finiteness of the right global dimension of $R[F][T]
$ and thus implies right regularity. Thus, the conditions of Gersten's theorem
(Theorem \ref{thm_GerstenVersionOfQuillenThm}) are met, i.e. we get an
equivalence in $K$-theory%
\[
K(\mathsf{P}_{f}(R))\overset{\sim}{\longrightarrow}K(\mathsf{P}_{f}(R[F]))
\]
and by Theorem \ref{thm_GerstenCompareProjectivesWithFinPres} this can be
transformed into an equivalence%
\[
K(\mathsf{Mod}_{fp}(R))\overset{\sim}{\longrightarrow}K(\mathsf{Mod}%
_{fp}(R[F]))\text{.}%
\]
Combining this with the equivalences in line \ref{lvia1} gives our claim in
the affine case.\newline\textit{(Step 2)} Suppose $X$ is not affine. Two proof
ideas come to mind: \textit{(Version I)} The presheaf of spectra $K:U\mapsto
K(\mathsf{Coh}(U))$ satisfies Zariski descent. By Corollary
\ref{Cor_KFSheafOfSpectra_SatisfiesZariskiDescent} the same is true for its
Frobenius line analogue $KF:U\mapsto K(\mathsf{Coh}_{\mathcal{O}_{U}[F]}(U))$.
Hence, by\ Zariski descent we may check whether the induced homomorphism%
\[
K\rightarrow KF\text{,}\qquad\mathcal{F}\mapsto\mathcal{F}\otimes
_{\mathcal{O}_{X}}\mathcal{O}_{X}[F]
\]
is an equivalence on affine covers, where it is true by Step 1.
\textit{(Version II)} We can also circumvent sheaf methods and follow
Quillen's lead in \cite[\S 7.4, Prop. 4.1]{MR0338129}. We prove the claim by
induction on the dimension of $X$. If $X$ is zero-dimensional, it is just a
collection of finitely many closed points and we are in the affine situation.
This case is already proven. Thus, suppose the case of dimension $n-1$ is
settled and $\dim X=n\geq1$. Suppose $Z\hookrightarrow X$ is a reduced closed
subscheme with $\operatorname*{codim}_{X}Z\geq1$ and $U:=X-Z$ its open
complement. Then we have the localization sequence, Proposition
\ref{prop_KFLocalizationSequence},%
\[
K(\mathsf{Coh}_{\mathcal{O}_{Z}[F]}(Z))\overset{i_{\ast}}{\longrightarrow
}K(\mathsf{Coh}_{\mathcal{O}_{X}[F]}(X))\overset{j^{\ast}}{\longrightarrow
}K(\mathsf{Coh}_{\mathcal{O}_{U}[F]}(U))\longrightarrow+1\text{.}%
\]
As $K$-theory commutes with filtering colimits, we may run over the filtering
family of all reduced closed subschemes $Z\hookrightarrow X$ such that
$\operatorname*{codim}_{X}Z\geq1$, ordered by inclusion of the underlying
closed subsets in the Zariski topology. We obtain%
\[
\underset{Z}{\underrightarrow{\operatorname*{colim}}}K(\mathsf{Coh}%
_{\mathcal{O}_{Z}[F]})\rightarrow K(\mathsf{Coh}_{\mathcal{O}_{X}%
[F]})\rightarrow\coprod_{x\in X^{0}}K(\mathsf{Coh}_{\mathcal{O}_{X,x}%
[F]})\rightarrow+1\text{.}%
\]
Each entry in the first term agrees with $K(\mathsf{Coh}(Z))$ by our induction
hypothesis and the fact that $Z$ has dimension at most $n-1$. Moreover, as the
$x\in X^{0}$ are generic points, $\dim\mathcal{O}_{X,x}=0$ and thus
$K(\mathsf{Coh}_{\mathcal{O}_{X,x}[F]})=K(\mathsf{Coh}(\kappa(x)))$, because
we are in an affine situation and can use Step 1. Now conclude by the
Five\ Lemma as in Quillen's proof of \cite[\S 7.4, Prop. 4.1]{MR0338129}. By
induction, the proof is finished. Unravelling the proof of Zariski descent for
$KF$, it is easy to see that both versions of the proof just differ by the
geometric intuition employed, yet on the technical level they are entirely equivalent.
\end{proof}

\section{Cartier modules}

We recall the basic definitions of the theory of Cartier modules, due to
Blickle and B\"{o}ckle. See \cite[\S 2]{MR2863904} for details and proofs.
Suppose $X$ is a Noetherian scheme, separated over $\mathbf{F}_{q}$. We write
$F$ for the Frobenius morphism%
\[
F:X\rightarrow X\text{,}%
\]
which maps $f$ to $f^{q}$ on the level of the structure sheaves, and is the
identity map on the underlying topological space of $X$. A \emph{coherent}%
\ \emph{Cartier module} (or in alternative terminology: a `coherent $\kappa
$-sheaf', see \cite[\S 2]{BBCartierCrystals}) is a coherent $\mathcal{O}_{X}%
$-module sheaf $\mathcal{F}$ along with an $\mathcal{O}_{X}$-linear map
$C:F_{\ast}\mathcal{F}\rightarrow\mathcal{F}$. A morphism of Cartier modules
is a morphism $\psi:\mathcal{F}\rightarrow\mathcal{G}$ of the underlying
coherent sheaves commuting with the respective maps $C$, i.e. the diagram%
\[%
\begin{array}
[c]{ccc}%
F_{\ast}\mathcal{F} & \overset{F_{\ast}\psi}{\longrightarrow} & F_{\ast
}\mathcal{G}\\
\downarrow &  & \downarrow\\
\mathcal{F} & \underset{\psi}{\longrightarrow} & \mathcal{G}%
\end{array}
\]
is supposed to commute. We write $\mathsf{CohCart}(X)$ for the category of
coherent Cartier modules. This is an abelian category. The Cartier module
$\mathcal{F}$ is called \emph{nilpotent} if there exists some integer $v\geq1
$ such that the $v$-th power $C^{v}:F_{\ast}^{v}\mathcal{F}\rightarrow
\mathcal{F}$ is the zero morphism. One can show that $\mathcal{F}$ is
nilpotent if and only if this holds for the stalks $\mathcal{F}_{x}$ at all
closed points $x\in X$, \cite[Lemma 2.10]{MR2863904}.

The category of nilpotent coherent Cartier modules, call it $\mathsf{CohCart}%
_{nil}(X)$, is a Serre subcategory of $\mathsf{CohCart}(X)$, \cite[Lemma
2.11]{MR2863904}. Define the category of \emph{Cartier crystals} as the
quotient category%
\[
\mathsf{CartCrys}(X):=\mathsf{CohCart}(X)/\mathsf{CohCart}_{nil}(X)\text{.}%
\]
Since this is a quotient of an abelian category by a Serre category, this is
itself an abelian category. These categories have a very rich theory of
pullback and pushforward functors, both in $\ast$- and $!$-variants. This is a
longer story and since we shall not need them, we refer the curious reader to
\cite{MR2863904}.

\subsection{Riemann--Hilbert correspondence\label{subsect_RH}}

We shall need a positive characteristic version of the Riemann--Hilbert
correspondence: Suppose $X/\mathbf{F}_{q}$ is a smooth scheme. Let
$\mathsf{\acute{E}t}_{c}(X,\mathbf{F}_{q})$ be the abelian category of
constructible \'{e}tale sheaves with $\mathbf{F}_{q}$-coefficients. The
derived category $D_{c}^{b}(X_{\acute{e}t},\mathbf{F}_{q})$ has a perverse
$t$-structure, constructed by Gabber \cite{MR2099084}, \cite[\S 11.5.2]{MR2071510}. Let
$\mathsf{\acute{E}t}_{perv}(X,\mathbf{F}_{q})$ denote the heart of this
$t$-structure. The objects of this category are called \emph{perverse
\'{e}tale sheaves}.

Then there is an equivalence of abelian categories%
\begin{equation}
\operatorname*{Sol}:\mathsf{CartCrys}(X)\overset{\sim}{\longrightarrow
}\mathsf{\acute{E}t}_{perv}(X,\mathbf{F}_{q})^{op}\text{.}\label{lcinn1}%
\end{equation}
This result can be obtained by combining the Riemann--Hilbert correspondence
of Emerton--Kisin with a mechanism due to Blickle and B\"{o}ckle. We give a
summary of how this works:

\begin{definition}
Suppose $X/\mathbf{F}_{q}$ is regular and $F$-finite. Then we call it
$F$\emph{-dualizing}, if the dualizing sheaf $\omega_{X}$ (relative to the
base $\mathbf{F}_{q}$) is compatible with the Frobenius in the sense of
$F^{!}\omega_{X}\simeq\omega_{X}$.
\end{definition}

This condition is discussed in detail in \cite[\S 2.4]{MR2863904}. Moreover,
if $X/\mathbf{F}_{q}$ is smooth or $X$ is affine, $X$ is automatically
$F$-dualizing, so this condition is harmless. See loc. cit.

We write $\mu_{\operatorname*{lfgu}}(X)$ for the abelian category of locally
finitely generated unit left $\mathcal{O}_{X}[F]$-modules, as introduced by
Emerton and\ Kisin (\cite[Definition 6.3]{MR2071510}).

\begin{theorem}
[Emerton--Kisin]Suppose $X/\mathbf{F}_{q}$ is a smooth scheme. Then there is
an equivalence of abelian categories%
\[
\mu_{\operatorname*{lfgu}}(X)\overset{\sim}{\longrightarrow}\mathsf{\acute
{E}t}_{perv}(X,\mathbf{F}_{q})^{op}\text{.}%
\]

\end{theorem}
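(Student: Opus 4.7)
The plan is to construct a solutions functor mediated by the \'{e}tale structure sheaf and then match $t$-structures on the two sides. On the derived level, one defines
\[
\mathrm{Sol}(\mathcal{M}^{\bullet}) := R\mathcal{H}om_{\mathcal{O}_{X}[F]}(\mathcal{M}^{\bullet},\mathcal{O}_{X,\text{\'{e}t}})[\dim X],
\]
where $\mathcal{O}_{X,\text{\'{e}t}}$ denotes the pullback of the structure sheaf to the \'{e}tale site, endowed with its tautological left $\mathcal{O}_{X}[F]$-action via the Frobenius. The essential geometric input is the Artin--Schreier short exact sequence
\[
0\longrightarrow\mathbf{F}_{q}\longrightarrow\mathcal{O}_{X,\text{\'{e}t}}\xrightarrow{F-\mathrm{id}}\mathcal{O}_{X,\text{\'{e}t}}\longrightarrow0,
\]
which exchanges $\mathbf{F}_{q}$-coefficients for the Frobenius operator and is what makes it plausible that $\mathrm{Sol}$ lands in $D^{b}_{c}(X_{\text{\'{e}t}},\mathbf{F}_{q})$ at all.

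The proof would then proceed in three steps. First, I would construct a tentative quasi-inverse $\mathrm{Rec}:\mathcal{L}\mapsto R\mathcal{H}om_{\mathbf{F}_{q}}(\mathcal{L},\mathcal{O}_{X,\text{\'{e}t}})$ and verify $\mathrm{Rec}\circ\mathrm{Sol}\simeq\mathrm{id}$ and $\mathrm{Sol}\circ\mathrm{Rec}\simeq\mathrm{id}$ by reducing to the building blocks $\mathcal{M}=\mathcal{O}_{X}[F]$ and $\mathcal{L}=\underline{\mathbf{F}_{q}}$, where the verification collapses onto the Artin--Schreier sequence together with the unit condition $F^{\ast}\mathcal{M}\xrightarrow{\sim}\mathcal{M}$. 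Second, check that on locally finitely generated unit modules the image of $\mathrm{Sol}$ actually lies in $D^{b}_{c}(X_{\text{\'{e}t}},\mathbf{F}_{q})$; this comes from the finite presentation over $\mathcal{O}_{X}[F]$ combined with a stratification argument, on each stratum of which the unit module becomes \'{e}tale-locally trivial, reducing constructibility to the case of a smooth $\mathbf{F}_{q}$-sheaf on an open dense subscheme.

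The main obstacle is the third step: identifying $\mu_{\operatorname*{lfgu}}(X)$ itself with the perverse heart, not merely embedding it into the derived category of $\mathbf{F}_{q}$-sheaves. The natural heart on the left is the standard $t$-structure on $\mathcal{O}_{X}[F]$-modules, whereas Gabber's perverse $t$-structure on the right is characterized by support-dimension inequalities on stalks and costalks, so the shift by $\dim X$ must be calibrated carefully. The strategy is to verify the $t$-exactness of $\mathrm{Sol}$ on a class of generators --- intermediate extensions $i_{!\ast}(L[\dim Z])$ of smooth $\mathbf{F}_{q}$-sheaves $L$ on smooth locally closed subschemes $i:Z\hookrightarrow X$, together with the corresponding pushforwards of rank-one locally free unit modules from $Z$ --- after which d\'{e}vissage along the codimension filtration propagates the match of hearts to arbitrary objects. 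Throughout, the non-Noetherian nature of $\mathcal{O}_{X}[F]$ forces one to invoke coherence (Proposition \ref{prop_RightSuperCoherence} and its left analogue, Theorem \ref{marker_EmertonLeftCoherence}) at every stage where one wants to upgrade finite generation to finite presentation.
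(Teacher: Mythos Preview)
The paper does not prove this statement; it is quoted as the main theorem of Emerton and Kisin's monograph \cite{MR2071510} and simply cited (``This is the central result of the first part of \cite{MR2071510}''). So there is no proof in the paper to compare against.

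Your sketch does broadly follow the actual Emerton--Kisin strategy: define $\mathrm{Sol}$ via $R\mathcal{H}om$ against the \'{e}tale structure sheaf, use Artin--Schreier as the bridge between $\mathbf{F}_q$-coefficients and the Frobenius, and then match $t$-structures. Two caveats, however. First, the category $\mu_{\mathrm{lfgu}}(X)$ consists of \emph{left} $\mathcal{O}_X[F]$-modules, so your appeal to Proposition~\ref{prop_RightSuperCoherence} (right stable coherence, which is the present paper's own contribution) is misplaced; only Emerton's left coherence, Theorem~\ref{marker_EmertonLeftCoherence}, is pertinent, and in any case the Emerton--Kisin argument neither uses nor presupposes the coherence results of this paper. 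Second, the reduction to the ``building blocks $\mathcal{M}=\mathcal{O}_X[F]$ and $\mathcal{L}=\underline{\mathbf{F}_q}$'' is too coarse: neither side is generated by these objects in a way that lets biduality propagate formally, and the actual proof in \cite{MR2071510} requires a full six-functor formalism for unit $\mathcal{O}_X[F]$-modules (a Kashiwara-type equivalence for closed immersions, compatibility of $\mathrm{Sol}$ with $f_+$ and $f^!$, and an induction on dimension via open--closed decompositions). Your d\'{e}vissage-along-strata idea points in the right direction but substantially understates the work involved --- this is the content of an entire monograph, not a proposition.
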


This is the central result of the first part of \cite{MR2071510}. This is also
explained in \cite{MR2099082}.

\begin{theorem}
[Blickle--B\"{o}ckle \cite{MR2863904}]%
\label{thm_BlickleBoeckleRHCartCrysToUnitFCrys}Suppose $X/\mathbf{F}_{q}$ is a
regular, $F$-finite and $F$-dualizing scheme. Then there is an equivalence of
abelian categories%
\[
\mathsf{CartCrys}(X)\overset{\sim}{\longrightarrow}\mu_{\operatorname*{lfgu}%
}(X)\text{.}%
\]

\end{theorem}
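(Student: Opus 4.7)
The plan is to realize the equivalence as a Grothendieck--Serre duality, coupled with a stabilization along $F$ that turns a merely ``unital-up-to-nilpotents'' structure into a genuine unit $F$-module.

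First I would construct a forward functor $\Phi\colon\mathsf{CohCart}(X)\to\mu_{\operatorname*{lfgu}}(X)$. Since $X$ is $F$-finite, $F$ is a finite morphism and so $F_{\ast}=F_{!}$ has a right adjoint $F^{!}$. A Cartier structure $C\colon F_{\ast}\mathcal{M}\to\mathcal{M}$ is therefore equivalent, by adjunction, to a map $\kappa\colon\mathcal{M}\to F^{!}\mathcal{M}$. Now apply the coherent duality functor $\mathbb{D}(-):=R\mathcal{H}om_{\mathcal{O}_{X}}(-,\omega_{X}^{\bullet})$. Grothendieck duality gives $\mathbb{D}(F_{\ast}\mathcal{M})\simeq F_{\ast}\mathbb{D}_{F}(\mathcal{M})$ where $\mathbb{D}_{F}$ is duality relative to $F^{!}\omega_{X}$; by the $F$-dualizing hypothesis $F^{!}\omega_{X}\simeq\omega_{X}$ this is the same as $F_{\ast}\mathbb{D}\mathcal{M}$. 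Dualizing $C$ therefore yields $\gamma\colon\mathbb{D}\mathcal{M}\to F_{\ast}\mathbb{D}\mathcal{M}$, i.e.\ by the $F^{\ast}\dashv F_{\ast}$ adjunction a map $\phi\colon F^{\ast}\mathbb{D}\mathcal{M}\to\mathbb{D}\mathcal{M}$. This is exactly the datum of a left $\mathcal{O}_{X}[F]$-module structure on $\mathbb{D}\mathcal{M}$; by regularity and coherence, $\mathbb{D}\mathcal{M}$ (appropriately shifted) is again a coherent sheaf. Finally, I would stabilize by setting
\[
\Phi(\mathcal{M},C):=\underrightarrow{\operatorname*{colim}}\bigl(\mathbb{D}\mathcal{M}\longrightarrow F^{\ast}\mathbb{D}\mathcal{M}\longrightarrow(F^{\ast})^{2}\mathbb{D}\mathcal{M}\longrightarrow\cdots\bigr),
\]
which by construction carries an isomorphism $F^{\ast}\Phi(\mathcal{M},C)\overset{\sim}{\to}\Phi(\mathcal{M},C)$, i.e.\ is unit; it is generated over $\mathcal{O}_{X}[F]$ by the image of $\mathbb{D}\mathcal{M}$, hence locally finitely generated.

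Next, I would show that $\Phi$ factors through $\mathsf{CartCrys}(X)$. If $(\mathcal{M},C)$ is nilpotent with $C^{v}=0$, then the transpose map $\phi^{v}$ is also zero, so every transition map in the colimit is eventually zero and $\Phi(\mathcal{M},C)=0$. Hence $\Phi$ descends to the quotient by the Serre subcategory $\mathsf{CohCart}_{nil}(X)$.

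For a quasi-inverse $\Psi$, I would reverse the construction: given $\mathcal{N}\in\mu_{\operatorname*{lfgu}}(X)$, pick a coherent $\mathcal{O}_{X}$-submodule $\mathcal{M}_{0}\subseteq\mathcal{N}$ generating $\mathcal{N}$ as an $\mathcal{O}_{X}[F]$-module. The unit structure on $\mathcal{N}$ restricts to a map $F^{\ast}\mathcal{M}_{0}\to\mathcal{N}$; after enlarging $\mathcal{M}_{0}$ finitely many times one may assume its image is contained in $\mathcal{M}_{0}$, producing $\phi_{0}\colon F^{\ast}\mathcal{M}_{0}\to\mathcal{M}_{0}$. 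Re-dualizing gives a Cartier structure on $\mathbb{D}\mathcal{M}_{0}$, and one sets $\Psi(\mathcal{N}):=[\mathbb{D}\mathcal{M}_{0}]\in\mathsf{CartCrys}(X)$.

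The main obstacle -- and the reason the proof in \cite{MR2863904} is technically substantial -- is showing $\Psi$ is well-defined and that $\Phi$ and $\Psi$ are mutually quasi-inverse. Different choices of generating submodule $\mathcal{M}_{0}$ in the definition of $\Psi$ produce Cartier modules differing by nilpotent summands, and one must show that stabilization recovers the same crystal; conversely, the stabilization in $\Phi$ discards exactly the nilpotent part of $(\mathcal{M},C)$. The control needed here is precisely the gauge-boundedness and Noetherianity of $\mathsf{CartCrys}(X)$ proved in \cite{MR2863904}: these give the existence of a canonical minimal non-nilpotent Cartier submodule inside any coherent one, which is what makes the choice in $\Psi$ harmless after passing to crystals and what guarantees that the colimit in $\Phi$ is locally finitely generated over $\mathcal{O}_{X}[F]$.
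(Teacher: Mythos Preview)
Your route differs from the paper's, and it contains a genuine directional error that breaks the construction.

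The paper does \emph{not} pass through Grothendieck--Serre duality $\mathbb{D}(-)=R\mathcal{H}om(-,\omega_X^{\bullet})$. Instead it factors the equivalence through Blickle's category of $\gamma$-crystals via the \emph{covariant} twist
\[
\mathsf{CartCrys}(X)\;\longrightarrow\;\gamma\text{-}\mathsf{Crys}(X),\qquad \mathcal{F}\longmapsto \mathcal{F}\otimes_{\mathcal{O}_X}\omega_X^{\vee},
\]
and then applies the (also covariant) $\mathsf{Gen}$ functor $\mathcal{G}\mapsto\operatorname*{colim}(\mathcal{G}\to F^{\ast}\mathcal{G}\to\cdots)$. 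The point of the $\omega_X^{-1}$-twist is that, using the $F$-dualizing hypothesis, the adjoint $\kappa\colon \mathcal{M}\to F^{!}\mathcal{M}$ of a Cartier structure becomes precisely a $\gamma$-sheaf structure $\mathcal{M}\otimes\omega_X^{-1}\to F^{\ast}(\mathcal{M}\otimes\omega_X^{-1})$; this is what feeds correctly into the colimit.

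Your $\mathbb{D}$ is contravariant, so your $\Phi$ as written is a contravariant functor and would at best yield $\mathsf{CartCrys}(X)^{op}\simeq\mu_{\operatorname*{lfgu}}(X)$, not the covariant equivalence asserted. More concretely, your own computation shows this: dualizing $C\colon F_{\ast}\mathcal{M}\to\mathcal{M}$ gives $\gamma\colon \mathbb{D}\mathcal{M}\to F_{\ast}\mathbb{D}\mathcal{M}$, whose adjoint is $\phi\colon F^{\ast}\mathbb{D}\mathcal{M}\to\mathbb{D}\mathcal{M}$. That is a left $\mathcal{O}_X[F]$-module structure, but it is \emph{not} the datum needed for your colimit
\[
\operatorname*{colim}\bigl(\mathbb{D}\mathcal{M}\to F^{\ast}\mathbb{D}\mathcal{M}\to (F^{\ast})^{2}\mathbb{D}\mathcal{M}\to\cdots\bigr),
\]
which requires a map $\mathbb{D}\mathcal{M}\to F^{\ast}\mathbb{D}\mathcal{M}$ going the other way. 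Neither $\gamma$ (which lands in $F_{\ast}$, not $F^{\ast}$) nor $\phi$ supplies such a map, so the stabilization step is undefined. This is exactly the gap that the paper's covariant $\otimes\,\omega_X^{-1}$ twist is designed to close: it converts the Cartier datum into a genuine $\gamma$-sheaf $\mathcal{G}\to F^{\ast}\mathcal{G}$, for which the colimit makes sense and produces a unit $F$-module.
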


This equivalence is constructed by concatenating a substantial list of other
individual equivalences of categories. Let us sketch this:\textit{\ (Step 1)}
There is another category of crystals, $\gamma$-crystals, which we denote by
$\left.  \gamma\text{-}\mathsf{Crys}\right.  (X)$. It is a quotient category
of the $\gamma$-sheaf category of \cite{MR2407119}. We shall not use it
anywhere outside this section. Now, if $X/\mathbf{F}_{q}$ is regular and
$F$-finite and $F$-dualizing, then Blickle and B\"{o}ckle produce an
equivalence of abelian categories%
\begin{align*}
\mathbb{D}:\mathsf{CartCrys}(X)  & \longrightarrow\left.  \gamma
\text{-}\mathsf{Crys}\right.  (X)\\
\mathcal{F}  & \longmapsto\mathcal{F}\otimes_{\mathcal{O}_{X}}\omega_{X}%
^{\vee}\text{,}%
\end{align*}
where $\omega_{X}^{\vee}$ is the tensor inverse of $\omega_{X}$. See
\cite[Theorem 5.9]{MR2863904} for the proof. The functor clearly makes sense,
sending Cartier modules to $\gamma$-sheaves, and then they show that it
factors over the corresponding notions of nilpotent Cartier resp. nilpotent
$\gamma$-sheaves. Thus, it descends to a functor between the associated
quotient categories of crystals. \textit{(Step 2)} The second step is already
explained in Blickle's paper \cite{MR2407119}. Besides introducing $\gamma
$-sheaves at all, he establishes an equivalence of abelian categories%
\begin{align*}
\mathsf{Gen}:\left.  \gamma\text{-}\mathsf{Crys}\right.  (X)  &
\longrightarrow\mu_{\operatorname*{lfgu}}(X)\\
\mathcal{F}  & \longmapsto\operatorname*{colim}\left(  \mathcal{F}\rightarrow
F^{\ast}\mathcal{F}\rightarrow F^{2\ast}\mathcal{F}\rightarrow F^{3\ast
}\mathcal{F}\rightarrow\cdots\right)  \text{,}%
\end{align*}
where the transition morphisms are part of the datum of a $\gamma$-sheaf. By
\cite[Theorem 2.27]{MR2407119} such an equivalence exists with `minimal
$\gamma$-sheaves' on the left-hand side, and by \cite[Prop. 3.1]{MR2407119}
the latter category is equivalent to $\left.  \gamma\text{-}\mathsf{Crys}%
\right.  (X)$. The above functor is defined as the composition of these
equivalences. The essential surjectivity is the existence statement for
generators, dating back to the start of the entire business \cite{MR1476089}.

\begin{theorem}
\label{thm_TauCrystalsAgreeWithCartierCrystals}The category of $\tau$-crystals
of \cite{MR2561048}, which we will denote by $\left.  \tau\text{-}%
\mathsf{Crys}\right.  (X)$, is also equivalent to $\mathsf{CartCrys}(X)$.
\end{theorem}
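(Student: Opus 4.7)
The plan is to reduce the statement to an equivalence $\left.\tau\text{-}\mathsf{Crys}\right.(X)\simeq\left.\gamma\text{-}\mathsf{Crys}\right.(X)$, since by Step 1 of the sketch of Theorem \ref{thm_BlickleBoeckleRHCartCrysToUnitFCrys} the latter is already equivalent to $\mathsf{CartCrys}(X)$ via the Blickle--B\"ockle duality $\mathbb{D}$. So I would construct a functor $\Phi\colon\left.\tau\text{-}\mathsf{Crys}\right.(X)\longrightarrow\left.\gamma\text{-}\mathsf{Crys}\right.(X)$ and verify it is an equivalence, then invoke $\mathbb{D}^{-1}$ on the target.

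First, I would unfold the two definitions side by side. A $\gamma$-sheaf in the sense of \cite{MR2407119} consists of a coherent $\mathcal{O}_{X}$-module $\mathcal{M}$ together with an $\mathcal{O}_{X}$-linear structure morphism $F^{\ast}\mathcal{M}\rightarrow\mathcal{M}$; a $\tau$-sheaf in the convention of \cite{MR2561048}, inherited from B\"ockle--Pink, is given by exactly the same data. In both contexts, nilpotence of such a structure map means that some iterate $(F^{n})^{\ast}\mathcal{M}\rightarrow\mathcal{M}$ vanishes, and in both contexts the corresponding crystal category is obtained by localizing at the class of nilpotent morphisms, or equivalently (in the sense of \cite{MR2863904}) by quotienting by the Serre subcategory of nilpotent objects. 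Hence the identity on the underlying data defines $\Phi$ on the nose; fully-faithfulness and essential surjectivity then follow formally from the matching of objects, morphism sets and nilpotence classes in the two localized categories.

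Second, I would verify compatibility of the ambient hypotheses. Both \cite{MR2561048} and \cite{MR2863904} operate over $F$-finite Noetherian schemes with coherent underlying $\mathcal{O}_{X}$-modules, so no extra condition has to be imposed beyond what already defines $\mathsf{CartCrys}(X)$ in this paper. Composing $\Phi$ with $\mathbb{D}^{-1}$ then yields the asserted equivalence $\left.\tau\text{-}\mathsf{Crys}\right.(X)\simeq\mathsf{CartCrys}(X)$.

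The main obstacle is purely bookkeeping, not mathematical. One must match conventions carefully: different sources encode the Frobenius structure either as $F^{\ast}\mathcal{M}\rightarrow\mathcal{M}$ or as the adjoint map $\mathcal{M}\rightarrow F_{\ast}\mathcal{M}$ (equivalent data once $X$ is $F$-finite, so that $F$ is a finite morphism and $F^{\ast}\dashv F_{\ast}$ behaves well on coherent modules), and one has to confirm that ``nilpotent morphism'' and ``nilpotent object'' produce the same localization, as is already worked out in \cite{MR2407119}. Once this dictionary is in place, no further geometric input is needed and the equivalence is formal.
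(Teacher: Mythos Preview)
Your proposal contains a genuine error in the identification of $\gamma$-sheaves with $\tau$-sheaves. A $\gamma$-sheaf in the sense of \cite{MR2407119} is a coherent $\mathcal{O}_X$-module $\mathcal{M}$ equipped with a map $\gamma\colon \mathcal{M}\to F^{\ast}\mathcal{M}$, \emph{not} $F^{\ast}\mathcal{M}\to\mathcal{M}$. This is visible already in the paper's sketch of Step~2 following Theorem~\ref{thm_BlickleBoeckleRHCartCrysToUnitFCrys}: the transition maps in the colimit $\mathcal{F}\to F^{\ast}\mathcal{F}\to F^{2\ast}\mathcal{F}\to\cdots$ are iterates of $\gamma$. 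By contrast, a $\tau$-sheaf in \cite{MR2561048} carries a map $\tau\colon F^{\ast}\mathcal{M}\to\mathcal{M}$ (equivalently $\mathcal{M}\to F_{\ast}\mathcal{M}$ by adjunction). These arrows point in opposite directions, and no adjunction or $F$-finiteness hypothesis turns one into the other: $\operatorname{Hom}(\mathcal{M},F^{\ast}\mathcal{M})$ and $\operatorname{Hom}(F^{\ast}\mathcal{M},\mathcal{M})$ are simply different functors of $\mathcal{M}$. Hence the ``identity on underlying data'' does not define a functor $\Phi$, and the argument collapses at its first step. The bookkeeping you flag at the end is not the obstacle; the two categories are genuinely not the same category in disguise.

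The paper takes a different route. It invokes the Riemann--Hilbert correspondence of B\"ockle--Pink \cite[Theorem 10.3.6]{MR2561048}, which identifies $\tau$-crystals with a category of constructible \'etale $\mathbf{F}_q$-sheaves, and then matches this against the Emerton--Kisin/Blickle--B\"ockle chain already assembled in \S\ref{subsect_RH}. Both sides land in the same \'etale category, hence are equivalent to each other. The paper even remarks that a \emph{direct} construction of the equivalence, bypassing the \'etale side, is expected in \cite{bbforthcoming}; this underlines that such a direct comparison is not a matter of transcription but requires an actual argument (essentially a Grothendieck--Serre type duality exchanging the left-type structure $F^{\ast}\mathcal{M}\to\mathcal{M}$ for the right-type structure encoded by $\gamma$ or by the Cartier operator).
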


This follows by combining the Riemann--Hilbert correspondence of B\"{o}ckle
and\ Pink \cite[Theorem 10.3.6]{MR2561048} with the above. As far as I
understand, a direct construction of this equivalence will appear in
\cite{bbforthcoming}.

There is a much bigger panorama involving even more categories and their
equivalence and duality relations. We refer to \cite{MR2863904},
\cite{BBCartierCrystals} for the big picture.

\begin{remark}
[Singular case]Recently, the Riemann--Hilbert correspondence in the
formulation of line \ref{lcinn1} has even been shown for singular $X$ in
Schedlmeier's thesis \cite{Schedl}, under rather mild hypotheses. It is proven
by reduction to the smooth case. Presumably, a part of the following results
could be extended to singular $X$ using this.
\end{remark}

\subsection{$K$-theory of Cartier crystals\label{subsect_KTheoryOfCartCrys}}

Next, we compute the $K$-theory of the categories of Cartier modules and
Cartier crystals.

\begin{proposition}
\label{prop_Coh_Equals_CohCartNilp}There is a canonical equivalence%
\[
K_{m}(\mathsf{Coh}(X))\overset{\sim}{\longrightarrow}K_{m}(\mathsf{CohCart}%
_{nil}(X))\text{.}%
\]
It is induced from the exact functor which sends a coherent sheaf
$\mathcal{F}$ to the Cartier module, where the Cartier operator $C$ acts as
the zero map.
\end{proposition}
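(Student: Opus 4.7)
The plan is to apply Quillen's d\'evissage theorem \cite[\S 5, Theorem 4]{MR0338129}. The functor $\mathcal{F} \mapsto (\mathcal{F}, 0)$ identifies $\mathsf{Coh}(X)$ with the full subcategory $\mathcal{B} \subseteq \mathsf{CohCart}_{nil}(X)$ consisting of those Cartier modules whose Cartier operator is the zero map, and this identification is plainly an equivalence of abelian categories. Before invoking d\'evissage, I would first check that $\mathcal{B}$ is closed under subobjects, quotient objects and finite direct sums in $\mathsf{CohCart}_{nil}(X)$. A subobject $(\mathcal{G}, C_\mathcal{G}) \hookrightarrow (\mathcal{F}, 0)$ has $C_\mathcal{G}$ equal to the restriction of the zero map, hence zero, and the arguments for quotients and direct sums are analogous. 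The ambient category $\mathsf{CohCart}_{nil}(X)$ is itself abelian since, as recalled in the excerpt, it is a Serre subcategory of $\mathsf{CohCart}(X)$.

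The main content is to exhibit, for each $(\mathcal{F}, C) \in \mathsf{CohCart}_{nil}(X)$, a finite filtration with graded pieces in $\mathcal{B}$. Choose $v \geq 1$ with $C^v = 0$, where $C^n \colon F_\ast^n \mathcal{F} \to \mathcal{F}$ denotes the $n$-fold iterate. I would then define the coherent subsheaves
\[
\mathcal{F}_i := \operatorname{im}\bigl(C^i \colon F_\ast^i \mathcal{F} \to \mathcal{F}\bigr) \subseteq \mathcal{F}.
\]
Using the identity $C^{i+1} = C \circ F_\ast(C^i)$ (both sides equal $C \circ F_\ast C \circ \cdots \circ F_\ast^i C$), one has
\[
C\bigl(F_\ast \mathcal{F}_i\bigr) = \bigl(C \circ F_\ast C^i\bigr)\bigl(F_\ast^{i+1}\mathcal{F}\bigr) = \mathcal{F}_{i+1} \subseteq \mathcal{F}_i,
\]
so $\mathcal{F}_i$ is a Cartier submodule and the induced Cartier operator on $\mathcal{F}_i / \mathcal{F}_{i+1}$ is zero. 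Reversing the order, the filtration
\[
0 = \mathcal{F}_v \subseteq \mathcal{F}_{v-1} \subseteq \cdots \subseteq \mathcal{F}_1 \subseteq \mathcal{F}_0 = \mathcal{F}
\]
has all successive quotients in $\mathcal{B}$.

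Quillen's d\'evissage then immediately yields an equivalence $K(\mathcal{B}) \overset{\sim}{\longrightarrow} K(\mathsf{CohCart}_{nil}(X))$, and precomposing with the equivalence $\mathsf{Coh}(X) \cong \mathcal{B}$ supplies the stated isomorphism on all $K_m$. The only point requiring genuine care is the stability of the image filtration under $C$, which relies on the associativity identity $C^{i+1} = C \circ F_\ast C^i$; once that is pinned down the rest is purely formal.
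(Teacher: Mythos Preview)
Your proof is correct and follows exactly the paper's approach: apply Quillen's d\'evissage to the full subcategory of Cartier modules with zero Cartier operator, using a filtration of an arbitrary nilpotent Cartier module whose graded pieces have trivial $C$. The paper simply asserts that such a filtration exists, while you make it explicit via the images $\mathcal{F}_i = \operatorname{im}(C^i)$ and verify the required stability; this is a faithful fleshing-out of the paper's one-line sketch rather than a different argument.
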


\begin{proof}
It is clear that the relevant functor exists and is exact. Every nilpotent
Cartier module can be filtered and its filtered parts have trivial action by
$F$. Thus, each filtered part lies in the essential image of the functor. It
follows from Quillen's d\'{e}vissage theorem, \cite[\S 5, Theorem
4]{MR0338129}, that the induced map in $K$-theory is an equivalence.
\end{proof}

\begin{proposition}
\label{Prop_SplitingOfKTheoryOfCohCart}There is a canonical long exact
sequence%
\[
\cdots\rightarrow K_{m}(\mathsf{Coh}(X))\rightarrow K_{m}(\mathsf{CohCart}%
(X))\overset{q}{\rightarrow}K_{m}(\mathsf{CartCrys}(X))\rightarrow\cdots
\]
for all $m\in\mathbf{Z}$.
\end{proposition}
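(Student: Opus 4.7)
The plan is to identify the proposition as the long exact sequence coming from Quillen's localization theorem applied to the Serre subcategory $\mathsf{CohCart}_{nil}(X)\hookrightarrow\mathsf{CohCart}(X)$, and then to rewrite the first term using Proposition~\ref{prop_Coh_Equals_CohCartNilp}. All the required inputs have already been assembled in the paper or cited from \cite{MR2863904}.

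Concretely, I would first invoke the Blickle--B\"ockle fact (\cite[Lemma 2.11]{MR2863904}) that $\mathsf{CohCart}_{nil}(X)$ is a Serre subcategory of the abelian category $\mathsf{CohCart}(X)$, and recall that $\mathsf{CartCrys}(X)$ is by definition the Gabriel quotient of $\mathsf{CohCart}(X)$ by $\mathsf{CohCart}_{nil}(X)$. Quillen's localization theorem for Serre subcategories of abelian categories (\cite[\S 5, Theorem 5]{MR0338129}) then immediately produces a homotopy fiber sequence of $K$-theory spectra
\[
K(\mathsf{CohCart}_{nil}(X))\longrightarrow K(\mathsf{CohCart}(X))\overset{q}{\longrightarrow}K(\mathsf{CartCrys}(X))\longrightarrow +1,
\]
whose associated long exact sequence on homotopy groups has the correct middle and right terms.

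Second, I would substitute the equivalence $K(\mathsf{Coh}(X))\overset{\sim}{\to}K(\mathsf{CohCart}_{nil}(X))$ furnished by Proposition~\ref{prop_Coh_Equals_CohCartNilp} into this fiber sequence. Since that equivalence is implemented by a canonical exact functor (equip a coherent sheaf with the zero Cartier operator), the resulting replacement is functorial and the composition $\mathsf{Coh}(X)\to\mathsf{CohCart}_{nil}(X)\hookrightarrow\mathsf{CohCart}(X)$ is just the exact inclusion sending $\mathcal{F}$ to the zero-Cartier module. Passing to homotopy groups yields the stated long exact sequence.

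There is no real obstacle: both ingredients are already in place, and the argument is a formal concatenation. The only point deserving a brief comment is the range of $m$: Quillen's localization gives a fiber sequence of connective spectra, so for $m<0$ the sequence is trivially exact with vanishing terms, while for $m\geq 0$ the sequence is the genuine Quillen--Gersten localization long exact sequence truncating at $K_0(\mathsf{CartCrys}(X))\to 0$. (If a non-connective extension were desired, one could instead use the non-connective version of localization for abelian categories, but this is not needed for any subsequent application in the paper.)
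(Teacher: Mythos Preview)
Your argument is correct and matches the paper's proof essentially verbatim: invoke Quillen's localization sequence for the Serre subcategory $\mathsf{CohCart}_{nil}(X)\hookrightarrow\mathsf{CohCart}(X)$ with quotient $\mathsf{CartCrys}(X)$, then replace $K(\mathsf{CohCart}_{nil}(X))$ by $K(\mathsf{Coh}(X))$ via Proposition~\ref{prop_Coh_Equals_CohCartNilp}. Your additional remarks on the functoriality of the replacement and on the range of $m$ are accurate but not needed for the applications.
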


\begin{proof}
We have the exact sequence of abelian categories,%
\[
\mathsf{CohCart}_{nil}(X)\hookrightarrow\mathsf{CohCart}(X)\overset
{q}{\twoheadrightarrow}\mathsf{CartCrys}(X)\text{.}%
\]
Thus, by Quillen's localization sequence, \cite[\S 5, Theorem 5]{MR0338129},
there is a long exact sequence in $K$-theory groups,%
\begin{align}
& \cdots\rightarrow K_{m}(\mathsf{CohCart}_{nil}(X))\rightarrow K_{m}%
(\mathsf{CohCart}(X))\rightarrow\label{l1}\\
& \qquad\qquad\cdots\overset{q}{\rightarrow}K_{m}(\mathsf{CartCrys}%
(X))\rightarrow K_{m-1}(\mathsf{CohCart}_{nil}(X))\rightarrow\cdots
\text{.}\nonumber
\end{align}
The result follows from combining this with Prop.
\ref{prop_Coh_Equals_CohCartNilp}.
\end{proof}

\begin{definition}
Being $F$-finite, the Frobenius is a finite morphism and we get an exact
functor of pushforward%
\[
F_{\ast}:K(\mathsf{Coh}(X))\longrightarrow K(\mathsf{Coh}(X))\text{.}%
\]

\end{definition}

There is an exact forgetful functor%
\[
U:\mathsf{CohCart}(X)\longrightarrow\mathsf{Coh}(X)
\]
which just forgets the right action by $F$, i.e. it sends a coherent Cartier
module just to its underlying coherent $\mathcal{O}_{X}$-module. We write
\textquotedblleft$U$\textquotedblright\ for \textquotedblleft
underlying\textquotedblright.

\begin{proposition}
\label{prop_CompareEquivalenceAndCategoryInclusion}On the level of $K$-theory,
the inclusion functor%
\begin{equation}
\mathsf{CohCart}(X)\longrightarrow\mathsf{Coh}_{\mathcal{O}_{X}[F]}%
(X)\text{,}\label{lBM3}%
\end{equation}
when composed with the inverse $\Psi^{-1}$ (of the equivalence of Theorem
\ref{thm_TwistedA1InvarianceOfKTheory}), i.e.%
\[
\mathsf{CohCart}(X)\longrightarrow\mathsf{Coh}_{\mathcal{O}_{X}[F]}%
(X)\underset{\Psi}{\overset{\sim}{\longleftarrow}}\mathsf{Coh}(X)\text{,}%
\]
agrees with $(1-F_{\ast})\circ U$.
\end{proposition}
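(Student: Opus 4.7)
The plan is to produce a natural short exact sequence of exact functors from $\mathsf{CohCart}(X)$ to $\mathsf{Coh}_{\mathcal{O}_{X}[F]}(X)$ and then appeal to Quillen's additivity theorem. Given $(\mathcal{F},C)\in\mathsf{CohCart}(X)$, I first write $M$ for the corresponding right $\mathcal{O}_{X}[F]$-module sheaf, with $F$ acting by $m\cdot F:=C(m)$; this is precisely the image of $(\mathcal{F},C)$ under the inclusion $\iota$ of line \ref{lBM3}. Applying the sheafified version of Lemma \ref{lemma_FrobeniusTwistedKoszulComplex} to $M$ yields a short exact sequence
\[
0\longrightarrow\widetilde{M}[X]\overset{\psi}{\longrightarrow}M[X]\overset{\phi}{\longrightarrow}M\longrightarrow0
\]
of right $\mathcal{O}_{X}[F]$-module sheaves.

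Next, I identify the two outer terms functorially in $M$. By the very definition of $\Psi$ as right tensoring with $\mathcal{O}_{X}[F]$, we have $M[X]=U(M)\otimes_{\mathcal{O}_{X}}\mathcal{O}_{X}[F]=\Psi(U(M))$. For $\widetilde{M}[X]$, note that the tilde twist of the right $\mathcal{O}_{X}$-action by $r\mapsto r^{q}$ is, on the level of the underlying right $\mathcal{O}_{X}$-module, exactly the Frobenius pushforward $F_{\ast}$. Hence $\widetilde{M}[X]=F_{\ast}U(M)\otimes_{\mathcal{O}_{X}}\mathcal{O}_{X}[F]=\Psi(F_{\ast}U(M))$. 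The maps $\psi$ and $\phi$ of Lemma \ref{lemma_FrobeniusTwistedKoszulComplex} are manifestly natural in $M$, so what we have produced is in fact a short exact sequence of exact functors $\mathsf{CohCart}(X)\to\mathsf{Coh}_{\mathcal{O}_{X}[F]}(X)$:
\[
0\longrightarrow\Psi\circ F_{\ast}\circ U\longrightarrow\Psi\circ U\longrightarrow\iota\longrightarrow0.
\]
All three functors are exact: $\Psi$ because $\mathcal{O}_{X}[F]$ is left $\mathcal{O}_{X}$-flat, $U$ tautologically, $F_{\ast}$ because $F$ is a finite morphism under our $F$-finiteness assumption on $X$, and $\iota$ tautologically.

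Applying Quillen's Additivity Theorem for short exact sequences of exact functors, I obtain the identity of induced maps on $K$-theory spectra
\[
K(\iota)=K(\Psi\circ U)-K(\Psi\circ F_{\ast}\circ U)=K(\Psi)\circ(1-F_{\ast})\circ U.
\]
Composing on the left with the inverse of the equivalence $\Psi$ from Theorem \ref{thm_TwistedA1InvarianceOfKTheory} gives the desired equality $\Psi^{-1}\circ\iota=(1-F_{\ast})\circ U$.

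The only step requiring care is the identification $\widetilde{M}[X]\cong\Psi(F_{\ast}U(M))$: one must verify that restricting the tilde-twisted right $\mathcal{O}_{X}[F]$-module structure to the underlying right $\mathcal{O}_{X}$-module structure really produces the Frobenius pushforward and not, say, a pullback or some other variant. This is a direct unwinding of definitions, but it is the only non-mechanical piece of the argument; everything else is a matter of citing additivity and the previously established equivalence $\Psi$.
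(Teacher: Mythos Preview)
Your argument is essentially identical to the paper's own proof: both use the sheafified Frobenius-twisted Koszul sequence of Lemma~\ref{lemma_FrobeniusTwistedKoszulComplex}, identify the outer terms as $\Psi\circ U$ and $\Psi\circ F_{\ast}\circ U$, and invoke additivity. The one point you pass over that the paper makes explicit is that the three terms actually lie in $\mathsf{Coh}_{\mathcal{O}_{X}[F]}(X)$, i.e.\ are finitely \emph{presented} as right $\mathcal{O}_{X}[F]$-modules; this is supplied by Lemma~\ref{lem_FinGenOverRMeansFinPresOverRF} and should be cited so that your functors genuinely land in the claimed target category.
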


\begin{proof}
Given any coherent Cartier module $\mathcal{F}$, we have an exact sequence of
right $\mathcal{O}_{X}[F]$-modules,%
\[
0\longrightarrow\widetilde{\mathcal{F}}[X]\longrightarrow\mathcal{F}%
[X]\longrightarrow\mathcal{F}\longrightarrow0\text{,}%
\]
by using the sheaf version of Lemma \ref{lemma_FrobeniusTwistedKoszulComplex}.
By Lemma \ref{lem_FinGenOverRMeansFinPresOverRF} this is a short exact
sequence in the category $\mathsf{Coh}_{\mathcal{O}_{X}[F]}(X)$. We get three
functors $s_{i}:\mathsf{CohCart}(X)\rightarrow\mathsf{Coh}_{\mathcal{O}%
_{X}[F]}(X)$ for $i=1,2,3$:%
\[
\mathcal{F}\longrightarrow\widetilde{\mathcal{F}}[X]\text{,}\qquad
\mathcal{F}\longrightarrow\mathcal{F}[X]\text{,}\qquad\mathcal{F}%
\longrightarrow\mathcal{F}\text{.}%
\]
The last functor is obviously exact. The middle functor can also be written as
$\mathcal{F}[X]=\mathcal{F}\otimes_{\mathcal{O}_{X}}\mathcal{O}_{X}[F]$ and
since $\mathcal{O}_{X}[F]$ is a free and thus flat left $\mathcal{O}_{X}%
$-module sheaf, this functor is also exact. The leftmost functor arises
analogously, intervowen with the exact pushforward $F_{\ast}$, so it is also
an exact functor. By the Additivity Theorem \cite[\S 3]{MR0338129}, it follows
that $s_{2\ast}=s_{1\ast}+s_{3\ast}$ on $K$-theory. We note that $s_{2\ast
}=\Psi\circ U$ (by the very definition; recall the explicit functor of Theorem
\ref{thm_TwistedA1InvarianceOfKTheory}. Note that $\mathcal{F}\otimes
_{\mathcal{O}_{X}}\mathcal{O}_{X}[F]$ only depends on the (right)
$\mathcal{O}_{X}$-module structure of $\mathcal{F}$, so we tacitly have
forgotten its right-action by $F$) and by Remark \ref{remark_on_notation},
$s_{1\ast}=\Psi\circ F_{\ast}\circ U$. Thus, $s_{3\ast}=\Psi\circ(1-F_{\ast
})\circ U$, but $s_{3\ast}$ is just the inclusion of line \ref{lBM3}. This
proves the claim.
\end{proof}

Next, we recall the following classical result in algebra:

\begin{unnumthmrmk}
[Wedderburn's Theorem]Every finite division ring is a field.
\end{unnumthmrmk}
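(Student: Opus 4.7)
The plan is to follow the classical proof of Wedderburn's little theorem via the class equation of $D^\times$ and a cyclotomic polynomial estimate. Let $D$ be a finite division ring and let $Z$ denote its center. First I would observe that $Z$ is a finite commutative division ring, hence a finite field; set $q := |Z|$ and note that $|D| = q^n$ for some integer $n \geq 1$ since $D$ is a $Z$-vector space. The goal becomes showing $n = 1$, which forces $D = Z$.

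For each $x \in D$ the centralizer $C_D(x) := \{y \in D : xy = yx\}$ is a sub-division ring of $D$ containing $Z$, and viewing $D$ as a left $C_D(x)$-vector space yields $|C_D(x)| = q^{d(x)}$ with $d(x) \mid n$. Applying the class equation to the finite group $D^\times$ acting on itself by conjugation gives
\[
q^n - 1 \;=\; (q - 1) \;+\; \sum_{x} \frac{q^n - 1}{q^{d(x)} - 1},
\]
where the sum runs over representatives of the non-central conjugacy classes, so each $d(x)$ is a proper divisor of $n$.

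The decisive arithmetic input comes from cyclotomic polynomials. From the factorization $X^n - 1 = \prod_{k \mid n} \Phi_k(X)$ one reads off that $\Phi_n(q)$ divides $q^n - 1$, and that $\Phi_n(q)$ divides $(q^n - 1)/(q^d - 1)$ for every proper divisor $d \mid n$. Consequently $\Phi_n(q)$ divides every term of the class equation except $q - 1$, forcing $\Phi_n(q) \mid q - 1$.

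To close the argument I would use the estimate $|q - \zeta| > q - 1$ for every primitive $n$-th root of unity $\zeta$ when $n > 1$ (since $\zeta \neq 1$ lies off the real axis or equals $-1$, and $q \geq 2$), giving $|\Phi_n(q)| > q - 1$ and contradicting $\Phi_n(q) \mid q - 1$. Hence $n = 1$ and $D = Z$ is a field. The one genuinely non-routine step is this cyclotomic estimate; the class equation and centralizer divisibility are bookkeeping, and the geometric/absolute-value comparison is where the proof really happens.
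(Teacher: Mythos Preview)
Your proof is correct; it is the classical argument (often attributed to Witt) via the class equation and the cyclotomic bound $|\Phi_n(q)|>q-1$ for $n>1$. The paper, however, does not prove Wedderburn's theorem at all: it merely recalls the statement as a classical fact and then invokes it inside the proof of Proposition~\ref{prop_K_of_CartCrys_InPrimeToPTorsion} to conclude that the endomorphism division rings $D_Z$ are finite fields. So there is nothing to compare against beyond noting that what you have supplied is a standard and complete proof of a result the paper simply quotes.
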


\begin{proposition}
\label{prop_K_of_CartCrys_InPrimeToPTorsion}Suppose $X$ is $F$-finite.

\begin{enumerate}
\item For $m\geq1$ the group $K_{m}(\mathsf{CartCrys}(X))$ is a pure torsion
group, of order prime to $p$. In particular,%
\[
K_{m}(\mathsf{CartCrys}(X))\otimes_{\mathbf{Z}}\mathbf{Z}_{(p)}=0\qquad
\text{for}\qquad m\geq1\text{,}%
\]

\item $K_{m}(\mathsf{CartCrys}(X))=0$ for $m=2i$, $i>1$, and

\item $K_{0}(\mathsf{CartCrys}(X))$ is the free abelian group whose generators
correspond to the iso-classes of simple Cartier crystals.
\end{enumerate}
\end{proposition}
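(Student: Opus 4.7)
The plan is to reduce the computation to Quillen's $K$-theory of finite fields by decomposing $\mathsf{CartCrys}(X)$ into its semisimple part.

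First, I would invoke the strong finiteness results of Blickle--B\"{o}ckle \cite{MR2863904}: for $F$-finite $X$, every Cartier crystal has finite length in the abelian category $\mathsf{CartCrys}(X)$, and the endomorphism algebra $D_{S}:=\operatorname{End}_{\mathsf{CartCrys}(X)}(S)$ of a simple Cartier crystal $S$ is a finite ring. Since $S$ is simple in an abelian category, Schur's lemma guarantees that $D_{S}$ is a division ring. Being finite, Wedderburn's theorem then forces $D_{S}\cong\mathbf{F}_{q^{n(S)}}$ for some integer $n(S)\geq1$.

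Next, I would pass to the full subcategory $\mathsf{CartCrys}(X)^{ss}\subseteq\mathsf{CartCrys}(X)$ of semisimple objects. Since sub- and quotient objects of semisimple objects are semisimple, this is closed under the necessary operations, and the finite length condition supplies, for each object $\mathcal{F}$, a finite filtration with simple (hence semisimple) subquotients. Quillen's d\'evissage theorem \cite[\S 5, Thm. 4]{MR0338129} therefore produces an equivalence
\[
K(\mathsf{CartCrys}(X)^{ss})\overset{\sim}{\longrightarrow}K(\mathsf{CartCrys}(X)).
\]
Because Hom vanishes between non-isomorphic simples, the semisimple category decomposes as a coproduct of exact categories indexed by iso-classes of simple Cartier crystals,
\[
\mathsf{CartCrys}(X)^{ss}\simeq\bigoplus_{S}\mathcal{C}_{S},
\]
where $\mathcal{C}_{S}$ is the full subcategory of finite direct sums of copies of $S$. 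Standard Morita-style reasoning identifies $\mathcal{C}_{S}$ with the category $\mathsf{P}_{f}(D_{S})=\mathsf{P}_{f}(\mathbf{F}_{q^{n(S)}})$. Since $K$-theory converts coproducts of exact categories into direct sums of spectra, one obtains
\[
K_{m}(\mathsf{CartCrys}(X))\cong\bigoplus_{S}K_{m}(\mathbf{F}_{q^{n(S)}}).
\]

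Finally, plugging in Quillen's computation $K_{0}(\mathbf{F}_{\ell})=\mathbf{Z}$, $K_{2i-1}(\mathbf{F}_{\ell})=\mathbf{Z}/(\ell^{i}-1)\mathbf{Z}$ for $i\geq1$, and $K_{2i}(\mathbf{F}_{\ell})=0$ for $i\geq1$ reads off all three claims at once: each odd group is a direct sum of finite cyclic groups of orders $q^{n(S)i}-1$, which are coprime to $p$; each even group in positive degree vanishes; and $K_{0}$ is free abelian on the set of iso-classes of simple Cartier crystals. The main technical obstacle is the first step, namely invoking Blickle--B\"{o}ckle to secure finite length and the finiteness of $D_{S}$; once these are in hand, everything else is formal.
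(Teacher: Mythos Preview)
Your proposal is correct and follows essentially the same route as the paper: invoke Blickle--B\"{o}ckle for finite length and finiteness of endomorphism rings, apply Schur and Wedderburn to identify these as finite fields, use Quillen's d\'evissage to reduce to the (semi)simple subcategory, decompose as a coproduct indexed by simple objects, and read off the result from Quillen's computation of $K_{\ast}(\mathbf{F}_{\ell})$. If anything, your formulation is slightly more careful in taking the subcategory of \emph{semisimple} rather than simple objects, so that the closure hypotheses of d\'evissage are visibly satisfied.
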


\begin{proof}
Firstly, we use that if $X$ is $F$-finite, then the abelian category
$\mathsf{CartCrys}(X)$ is both Noetherian and Artinian by \cite[Corollary
4.7]{MR2863904}. In particular, every object in it has finite length. Let
$\mathsf{CartCrys}(X)^{ss}$ be the full subcategory of simple objects, i.e.
objects which do not admit any non-trivial subobjects. By\ Quillen's
d\'{e}vissage theorem, \cite[\S 5, Theorem 4]{MR0338129}, the inclusion
functor $i:\mathsf{CartCrys}(X)^{ss}\hookrightarrow\mathsf{CartCrys}(X)$
induces an equivalence of $K$-groups,%
\begin{equation}
K_{m}(\mathsf{CartCrys}(X)^{ss})\overset{\sim}{\longrightarrow}K_{m}%
(\mathsf{CartCrys}(X))\label{l3}%
\end{equation}
for all $m\in\mathbf{Z}$. Being semisimple abelian, we have an equivalence of
$\mathbf{F}_{q}$-linear abelian categories%
\begin{equation}
\mathsf{CartCrys}(X)^{ss}\longrightarrow\coprod_{Z}\mathsf{Mod}(D_{Z}%
)\text{,}\label{l4}%
\end{equation}
where $Z$ runs through the simple objects, and $D_{Z}:=\operatorname*{End}%
_{\mathsf{CartCrys}}(Z)$ denotes their endomorphism algebras. These must be
finite fields by \cite[Corollary 4.16]{MR2863904} $-$ in detail: Being simple,
Schur's Lemma implies that $D_{Z}$ is a division ring over $\mathbf{F}_{q}$.
Coherence of $Z$ and reduction to the single underlying associated prime
implies that $D_{Z}$ is also a finite-dimensional $\mathbf{F}_{q}$-vector
space. Thus, by\ Wedderburn's Theorem, the finite division ring $D_{Z}$ must
be a (necessarily finite)\ field. We obtain%
\[
D_{Z}\simeq\mathbf{F}_{p^{r(Z)}}\qquad\text{(for }r(Z)\geq1\text{
appropriately chosen depending on }Z\text{)}%
\]
and therefore line \ref{l4} implies that%
\[
K_{m}(\mathsf{CartCrys}(X)^{ss})\cong\coprod_{Z}K_{m}(D_{Z})=\coprod_{Z}%
K_{m}(\mathbf{F}_{p^{r(Z)}})
\]
and by Quillen's computation of the $K$-theory of finite fields
\cite{MR0315016},%
\[
K_{m}(\mathbf{F}_{p^{r}})=\left\{
\begin{array}
[c]{ll}%
\mathbf{Z}/(p^{ri}-1) & \text{for }m\geq1\text{ odd, }m=2i-1\\
0 & \text{for }m\geq1\text{ even.}%
\end{array}
\right.
\]
it follows that for $m\geq1$ the group $K_{m}(\mathsf{CartCrys}(X)^{ss})$ is a
direct sum of prime-to-$p$ pure torsion groups, and vanishes for even $m\geq
2$. Moreover, $K_{0}$ of any field is $\mathbf{Z}$. Along with the isomorphism
in line \ref{l3} this yields our claim.
\end{proof}

A different description of the $K_{0}$-group has been developed by Taelman in
\cite{taelmanwoodshole}: He describes it by his version of the function-sheaf
correspondence. Let $X(\mathbf{F}_{q})$ denote the set of $\mathbf{F}_{q}%
$-rational points of $X$, and $\operatorname*{Map}(X(\mathbf{F}_{q}%
),\mathbf{F}_{q})$ the vector space of set-theoretic maps (i.e. literally
assigning an element of $\mathbf{F}_{q}$ to each $\mathbf{F}_{q}$-rational point).

\begin{theorem}
[{Taelman, \cite[Theorem 3.6]{taelmanwoodshole}}]Suppose $X/\mathbf{F}_{q}$ is
a finite type scheme. Then there is a short exact sequence of abelian groups%
\[
0\longrightarrow R\longrightarrow K_{0}(\mathsf{CartCrys}(X))\overset
{\operatorname*{tr}}{\longrightarrow}\operatorname*{Map}(X(\mathbf{F}%
_{q}),\mathbf{F}_{q})\longrightarrow0\text{,}%
\]
given by the function-sheaf correspondence of \cite[Ch. 1, \S 2]%
{taelmanwoodshole}. The group $R$ is the subgroup generated by the differences
$[(\mathcal{F},c)]+[(\mathcal{F},c^{\prime})]-[(\mathcal{F},c+c^{\prime})]$,
where $c,c^{\prime}$ specify the action of the Cartier operators. The group
$R$ contains all $p$-th multiples, i.e. $pK_{0}(\mathsf{CartCrys}(X))$.
\end{theorem}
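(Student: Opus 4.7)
The plan is to establish the theorem in four stages: well-definedness of the homomorphism $\operatorname{tr}$, surjectivity, the inclusion $R \subseteq \ker\operatorname{tr}$ (from which $pK_0(\mathsf{CartCrys}(X)) \subseteq R$ falls out), and the delicate reverse inclusion $\ker\operatorname{tr} \subseteq R$.

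For well-definedness, given a coherent Cartier module $(\mathcal{F}, c)$ and an $\mathbf{F}_q$-rational point $x$, pulling back to $x$ yields a finite-dimensional $\mathbf{F}_q$-vector space together with an $\mathbf{F}_q$-linear endomorphism $c_x$, and one assigns the ordinary trace $\operatorname{tr}_x(\mathcal{F}, c) := \operatorname{tr}_{\mathbf{F}_q}(c_x)$. Additivity on short exact sequences in $\mathsf{CohCart}(X)$ should follow from the additivity of trace on vector spaces (using the derived pullback and the Euler-characteristic form of trace if $i_x^*$ fails to be exact on the sequence at hand), and nilpotent Cartier modules have vanishing trace at every point. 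Combining with Proposition \ref{prop_Coh_Equals_CohCartNilp} and the localization sequence of Proposition \ref{Prop_SplitingOfKTheoryOfCohCart}, one checks that $\operatorname{tr}$ annihilates the image of $K_0(\mathsf{Coh}(X))$, since objects $(\mathcal{F}, 0)$ tautologically have zero trace; so $\operatorname{tr}$ descends to a homomorphism $K_0(\mathsf{CartCrys}(X)) \to \operatorname{Map}(X(\mathbf{F}_q), \mathbf{F}_q)$.

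For surjectivity and for $R \subseteq \ker\operatorname{tr}$ I would argue together. For each $x \in X(\mathbf{F}_q)$ and scalar $a \in \mathbf{F}_q$ the skyscraper $i_{x*}(\mathbf{F}_q, a \cdot \mathrm{id})$ has trace function $a\,\chi_{\{x\}}$, and these span the target. For the inclusion $R \subseteq \ker\operatorname{tr}$, note that fixing the underlying sheaf $\mathcal{F}$, the assignment $c \mapsto \operatorname{tr}_x(\mathcal{F}, c)$ is $\mathbf{F}_q$-linear in $c$, so every generator $[(\mathcal{F},c)] + [(\mathcal{F},c')] - [(\mathcal{F}, c+c')]$ of $R$ maps to zero. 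Iterating this relation $p$ times modulo $R$ gives $p[(\mathcal{F}, c)] \equiv [(\mathcal{F}, pc)] = [(\mathcal{F}, 0)] \pmod{R}$, and $(\mathcal{F}, 0)$ is nilpotent, hence vanishes in $\mathsf{CartCrys}(X)$. This forces $pK_0(\mathsf{CartCrys}(X)) \subseteq R$.

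The main obstacle will be the reverse inclusion $\ker\operatorname{tr} \subseteq R$. My plan is to reduce to a point-by-point analysis via a localization sequence for Cartier crystals (the analogue of Proposition \ref{prop_KFLocalizationSequence} in the Blickle--B\"ockle framework), and then to leverage Proposition \ref{prop_K_of_CartCrys_InPrimeToPTorsion}: $K_0$ is free abelian on isomorphism classes of simples, and since $pK_0 \subseteq R$ the quotient $K_0/R$ is an $\mathbf{F}_p$-vector space. By Wedderburn's theorem the endomorphism ring of each simple is a finite field $\mathbf{F}_{p^r}$, so modulo $R$ the class of a simple is controlled by $\mathbf{F}_p$-linear data on its Cartier operator, and this data matches precisely what $\operatorname{tr}$ records at the $\mathbf{F}_q$-points in its support. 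Simples whose support contains no $\mathbf{F}_q$-rational point are invisible to $\operatorname{tr}$ and must already lie in $R$; the natural way to produce the required relations is via explicit Frobenius-twist manipulations in the spirit of the $\widetilde{(-)}$-functor used in Lemma \ref{lem_rightemertonkeylemma}, combined with norm-style identifications that equate a simple supported at a non-rational point with an $R$-combination. Simples supported at an $\mathbf{F}_q$-point reduce the problem to a direct computation in the category of finite-length $\mathbf{F}_q[F]$-modules, where the equality $\ker\operatorname{tr} = R$ should be visible by hand. The technical heart will be reconciling multiplicities: several distinct simples may share the same trace function, and exhibiting explicit $R$-relations between them is where I expect the real work to lie.
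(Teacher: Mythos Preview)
The paper does not prove this theorem at all: it is stated as a citation of Taelman's result \cite[Theorem~3.6]{taelmanwoodshole}, and the paragraph following the statement (``Loc.\ cit.\ this result is phrased for the $K_0$-group of the category of $\tau$-crystals\ldots'') merely explains how to transport Taelman's formulation across the equivalence of Theorem~\ref{thm_TauCrystalsAgreeWithCartierCrystals}. There is no proof in the paper to compare your proposal against.

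As for your proposal itself, the first three stages (well-definedness, surjectivity, and $R\subseteq\ker\operatorname{tr}$ together with $pK_0\subseteq R$) are standard and correct. However, your fourth stage is not a proof but an outline with an explicitly acknowledged gap: you write that exhibiting the $R$-relations among simples ``is where I expect the real work to lie,'' and indeed nothing you have written forces a simple crystal supported away from $X(\mathbf{F}_q)$ to lie in $R$, nor does anything identify two simples with the same trace function modulo $R$. The appeal to Proposition~\ref{prop_K_of_CartCrys_InPrimeToPTorsion} tells you $K_0/R$ is an $\mathbf{F}_p$-vector space generated by simples, but it gives no relations among those generators beyond $p$-torsion, so the argument as stated cannot conclude. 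Taelman's actual proof proceeds differently (via an explicit noetherian-induction and excision argument in the $\tau$-sheaf setting), and if you want to supply a self-contained proof you would need either to reproduce that or to genuinely complete the missing constructions you allude to.
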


Loc. cit. this result is phrased for the $K_{0}$-group of the category of
$\tau$-crystals. However, this category is equivalent to $\mathsf{CartCrys}%
(X)$ by B\"{o}ckle and Pink (Theorem
\ref{thm_TauCrystalsAgreeWithCartierCrystals}). Let us temporarily remain in
the context of $\tau$-sheaves: Taelman also shows that for every
$\mathbf{F}_{q} $-rational point $x:\operatorname*{Spec}\mathbf{F}%
_{q}\hookrightarrow X$ the pushforward (in the theory of $\tau$-sheaves) of
$x_{\ast}\mathbf{1}$, is a simple $\tau$-crystal such that%
\[
\operatorname*{tr}(x_{\ast}\mathbf{1})(y)=\delta_{y=x}%
\]
for $y\in X(\mathbf{F}_{q})$. In other words, it is a canonical preimage under
his function-sheaf correspondence of the delta function supported exclusively
at the given point $x$. By the equivalence of the categories of $\tau
$-crystals and Cartier crystals, the computation $K_{0}(\mathsf{\acute{E}%
t}_{c}(X,\mathbf{F}_{q}))=\bigoplus\mathbf{Z}$ can also be performed in the
context of $\tau$-crystals. The $\#X(\mathbf{F}_{q})$ pairwise non-isomorphic
$\tau$-crystals $x_{\ast}\mathbf{1}$ then provide a canonical subset of the
simple objects.\medskip

We arrive at one of our main results:

\begin{theorem}
\label{thm_Summary_KThyOfCartCrystals}Suppose $X/\mathbf{F}_{q}$ is a smooth
scheme. Then:

\begin{enumerate}
\item There is a canonical equivalence%
\[
K(\mathsf{CartCrys}(X))\overset{\sim}{\longrightarrow}K(\mathsf{\acute{E}%
t}_{c}(X,\mathbf{F}_{q}))\text{.}%
\]

\item Moreover,%
\[
K_{m}(\mathsf{\acute{E}t}_{c}(X,\mathbf{F}_{q}))=\left\{
\begin{array}
[c]{ll}%
\text{\emph{prime-to-}}p\text{\emph{\ torsion}} & \text{for }m=2i+1\text{,}\\
0 & \text{for }m=2i\text{, }i>0\text{,}\\
\bigoplus\mathbf{Z} & \text{for }m=0\text{,}%
\end{array}
\right.
\]
where $i\geq0$, and the direct sum in the last row runs over all simple
objects of $\mathsf{CartCrys}(X)$, or equivalently perverse sheaves
$\mathsf{\acute{E}t}_{perv}(X,\mathbf{F}_{q})$. Among them, there is a
canonical set of $\#X(\mathbf{F}_{q})$ generators which surject on the
right-hand side in Sequence \ref{lTae}, while the remaining generators all map
to zero under the function-sheaf correspondence.
\end{enumerate}
\end{theorem}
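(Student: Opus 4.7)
The plan is to reduce the two claims to results already in hand: the computation of $K_m(\mathsf{CartCrys}(X))$ from Proposition \ref{prop_K_of_CartCrys_InPrimeToPTorsion}, the Riemann--Hilbert equivalence of line \ref{lcinn1}, Neeman's theorem of the heart, and Taelman's description of the simple generators via the function-sheaf correspondence. Part (1) is the real work; part (2) then follows by transport of structure.

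For part (1), I would first pass to the opposite category: since $K$-theory of an abelian category is canonically equivalent to the $K$-theory of its opposite (reverse every short exact sequence), the Riemann--Hilbert correspondence $\operatorname{Sol}\colon\mathsf{CartCrys}(X)\overset{\sim}{\to}\mathsf{\acute{E}t}_{perv}(X,\mathbf{F}_q)^{op}$ from line \ref{lcinn1} gives
\[
K(\mathsf{CartCrys}(X))\overset{\sim}{\longrightarrow}K(\mathsf{\acute{E}t}_{perv}(X,\mathbf{F}_q)).
\]
Now the bounded constructible derived category $D^{b}_{c}(X_{\acute{e}t},\mathbf{F}_{q})$ carries two bounded $t$-structures, the standard one with heart $\mathsf{\acute{E}t}_{c}(X,\mathbf{F}_q)$ and Gabber's perverse $t$-structure with heart $\mathsf{\acute{E}t}_{perv}(X,\mathbf{F}_q)$. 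Applying Neeman's theorem of the heart to each gives
\[
K(\mathsf{\acute{E}t}_{c}(X,\mathbf{F}_q))\overset{\sim}{\longrightarrow}K(D^{b}_{c}(X_{\acute{e}t},\mathbf{F}_q))\overset{\sim}{\longleftarrow}K(\mathsf{\acute{E}t}_{perv}(X,\mathbf{F}_q)),
\]
and splicing these equivalences with the one above yields the equivalence of (1). The main obstacle is checking the hypotheses of Neeman's theorem for both hearts: one must verify that the constructible and perverse hearts are Noetherian abelian categories (both hold, using the standard stratification arguments for $\mathsf{\acute{E}t}_{c}$ and the Noetherian property of $\mathsf{CartCrys}(X)$ from \cite[Corollary 4.7]{MR2863904} combined with Riemann--Hilbert for $\mathsf{\acute{E}t}_{perv}$), and that both $t$-structures are bounded on $D^{b}_{c}$, which is a standard fact for the constructible $p$-torsion setting.

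For part (2), the $K$-group computation is an immediate transport: Proposition \ref{prop_K_of_CartCrys_InPrimeToPTorsion} gives exactly the stated vanishing in even positive degrees, the prime-to-$p$ torsion in odd positive degrees (coming from Quillen's $K_*(\mathbf{F}_{p^r})$), and the free abelian group on iso-classes of simple Cartier crystals in degree zero. Under the equivalence from part (1), simple Cartier crystals correspond to simple perverse étale $\mathbf{F}_q$-sheaves, yielding the stated description of $K_0$.

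Finally, for the distinguished generators: Taelman exhibits for each $\mathbf{F}_q$-rational point $x\in X(\mathbf{F}_q)$ a simple $\tau$-crystal $x_{\ast}\mathbf{1}$ whose trace function is the delta function $\delta_{y=x}$; via Theorem \ref{thm_TauCrystalsAgreeWithCartierCrystals} these correspond to $\#X(\mathbf{F}_q)$ pairwise non-isomorphic simple objects of $\mathsf{CartCrys}(X)$. Their classes in $K_0$ then surject onto $\bigoplus_{x\in X(\mathbf{F}_q)}\mathbf{F}_q$ in Sequence \ref{lTae}, since the trace pairing evaluates each to its associated delta function. The remaining simple objects lie in the kernel of the function-sheaf correspondence, since every simple whose trace is nonzero at some $y\in X(\mathbf{F}_q)$ must coincide with $y_{\ast}\mathbf{1}$ by simplicity and the universal property of the $y_{\ast}\mathbf{1}$ as minimal extensions. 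This completes the identification of the canonical subset of generators.
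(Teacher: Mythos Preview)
Your proposal is correct and follows essentially the same route as the paper: Riemann--Hilbert plus insensitivity of $K$-theory to passing to the opposite category for the first equivalence, then Neeman's theorem of the heart applied to the standard and perverse $t$-structures on $D^{b}_{c}(X_{\acute{e}t},\mathbf{F}_{q})$ to compare the two hearts, and finally transport of Proposition~\ref{prop_K_of_CartCrys_InPrimeToPTorsion} together with Taelman's description of the simple generators. The only minor difference is that you spend effort verifying Noetherian hypotheses on the hearts, whereas the paper invokes a formulation of Neeman's theorem requiring only a Waldhausen model and bounded $t$-structures (citing \cite{MR2181838}, \cite{MR3427577}); your extra care does no harm but is not needed in the version the paper uses.
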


\begin{proof}
\textit{(1)} By the Riemann--Hilbert correspondence of Emerton and\ Kisin
\cite{MR2071510}, in the concrete shape of Equation \ref{lcinn1}, we have an
equivalence%
\[
K(\mathsf{CartCrys}(X))\overset{\sim}{\longrightarrow}K(\mathsf{\acute{E}%
t}_{perv}(X,\mathbf{F}_{q}))
\]
since $K$-theory is not affected by switching to the opposite category. On the
right-hand side the perverse $t$-structure plays the essential r\^{o}le on the
level of categories. However, on the level of $K$-theory this washes out:
Thanks to Neeman's Theorem of the Heart (see the survey \cite[Theorem
50]{MR2181838} and the following discussion; or \cite{MR3427577}) the
following holds: Suppose $\mathsf{T}$ is a triangulated category which admits
a Waldhausen model, and $\mathsf{A}$ and $\mathsf{B}$ two abelian categories
which arise as hearts of two bounded $t$-structures on $\mathsf{T} $, then
there is an equivalence in $K$-theory, $K(\mathsf{A})\overset{\sim
}{\longrightarrow}K(\mathsf{B})$. Bounded complexes of \'{e}tale
$\mathbf{F}_{q}$-sheaves with constructible cohomology, $C_{c}^{b}%
(X_{\acute{e}t},\mathbf{F}_{q})$, provide a dg and\ Waldhausen model for
$D_{c}^{b}(X_{\acute{e}t},\mathbf{F}_{q})$. We deduce that%
\begin{equation}
K(\mathsf{\acute{E}t}_{c}(X,\mathbf{F}_{q}))\overset{\sim}{\longrightarrow
}K(\mathsf{\acute{E}t}_{perv}(X,\mathbf{F}_{q}))\label{lcin4}%
\end{equation}
by Neeman's theorem. The reader can find an analogous procedure explained and
carried out in \cite[\S 7]{MR3427577}, where it is applied to the perverse
$t$-structure on coherent sheaves.\newline\textit{(2)} The discussion of this
section implies the claims for $K(\mathsf{CartCrys}(X))$, namely Prop.
\ref{prop_K_of_CartCrys_InPrimeToPTorsion} and Taelman's result, and then use
part (1) of the proof.
\end{proof}

Note that the comparison of line \ref{lcin4} only works on the level of
$K$-theory. As abelian categories, $\mathsf{\acute{E}t}_{perv}(X,\mathbf{F}%
_{q}) $ and $\mathsf{\acute{E}t}_{c}(X,\mathbf{F}_{q})$ are very different.

\section{\label{sect_QuillenRevisited}Quillen's computation $-$ revisited}

In \textquotedblleft Higher Algebraic $K$-Theory I\textquotedblright%
\ \cite{MR0338129} Quillen studies not only the $K$-theory of schemes, but
also some examples of non-commutative rings.\ The prominent and better known
example are central simple algebras $A$ and his computation%
\[
K(X)\overset{\sim}{\longrightarrow}\coprod K(A^{\otimes i})\text{,}%
\]
where $X$ is the associated Severi--Brauer variety. This computation is, in a
way, a \textquotedblleft slightly non-commutative\textquotedblright%
\ generalization of the computation of the $K$-theory of projective space
$\mathbf{P}_{k}^{n}$.

Instead, we shall focus on the other, less well-known, example.\ It was a
motivation for this paper, because one runs into severe technical problems
when trying to generalize it, and unlike the above, it is far less clear how
to connect it to \textquotedblleft geometry\textquotedblright. Quillen gives
this example on the pages\footnote{or equivalently pages 114-115, 122-123,
depending on which of the three incompatible paginations of Quillen's paper
the reader wishes to use} 38-39: He fixes a prime power $q=p^{r}$, $r\geq1$,
defines $k:=\mathbf{F}_{q}^{\operatorname*{sep}}$ and considers the Frobenius
skew ring $A:=k[F]$ over $k$. Then he defines $D$ to be the quotient skew
field of $A$,%
\begin{equation}
D:=\operatorname*{Quot}A\text{.}\label{lBM6}%
\end{equation}

Quillen now computes the $K$-theory of this skew field:%
\begin{equation}
K_{m}(D)=\left\{
\begin{array}
[c]{ll}%
\mathbf{Z} & \text{if }m=0\\
\mathbf{Z}\oplus\mathbf{Z} & \text{if }m=1\\
(K_{2i-1}\mathbf{F}_{q})^{\oplus2} & \text{if }m=\text{even}\\
0 & \text{if }m=\text{odd.}%
\end{array}
\right.  \label{lca3}%
\end{equation}
We will now adapt this computation to smooth projective schemes over
$\mathbf{F}_{q}$: We first need to define $D$ in general, for example for a
general ring $R$ instead of $\mathbf{F}_{q}^{\operatorname*{sep}}$. We should
recall its definition: For an associative unital domain $R$, the set of
non-zero elements may satisfy the Ore conditions. If this is the case, it is
called a \emph{left} (res. \emph{right})\ \emph{Ore domain}. Then the
localization $D:=(R-\{0\})^{-1}R$ is a skew field \cite[Cor. 1.3.3]%
{MR1349108}\footnote{Note that Cohn writes \textquotedblleft
field\textquotedblright\ for what we would call a "skew field" and
\textquotedblleft$R^{\times}$\textquotedblright\ for $R-\{0\}$, while we would
write $R^{\times}$ to denote the group of units of the ring $R$.}. Now, the
following characterization holds:

\begin{lemma}
\label{Lemma_CharacterizeOreDomains}Suppose $k/\mathbf{F}_{q}$ is some field
extension. Then the Frobenius skew ring $k[F]$ is a left (or right) Ore domain
if and only if $k$ is perfect.
\end{lemma}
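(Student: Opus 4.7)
The plan is to view $k[F]$ as the skew polynomial ring $k[F;\sigma]$ with the twist $\sigma\colon k\to k$, $x\mapsto x^q$, and to reduce the Ore question to the surjectivity of $\sigma$; since $k$ being perfect is by definition equivalent to $\sigma$ being surjective, all the work happens at this level. Along the way one notes that $k[F]$ is always a domain: the leading term of a product $(\sum r_iF^i)(\sum s_jF^j)$ is $r_m\sigma^m(s_n)F^{m+n}$, nonzero whenever both factors are, since $\sigma$ is injective on a field.

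For the sufficient direction, assume $k$ is perfect, so $\sigma$ is an automorphism. Then the degree function yields both a left and a right Euclidean algorithm on $k[F;\sigma]$. To cancel the leading term $\beta_m F^m$ of $\beta$ against $\alpha$ of degree $n$ with leading coefficient $\alpha_n$, one solves either $c\,\sigma^{m-n}(\alpha_n)=\beta_m$ (for a left quotient $cF^{m-n}$) or $\sigma^n(c)\,\alpha_n=\beta_m$ (for a right quotient $F^{m-n}c$). The first is immediately solvable in the field $k$; the second requires $\beta_m/\alpha_n$ to have a $q^n$-th root in $k$, and this is precisely what surjectivity of $\sigma$ provides. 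Hence $k[F]$ is a left and right principal ideal domain, in particular both a left and a right Ore domain.

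For the necessary direction, pick $a\in k\setminus k^q$ (possible because non-perfectness means $\sigma$ is not surjective). The plan is to obstruct the right Ore condition by means of the explicit pair $F$ and $aF$ in $k[F]$. Using the defining relation $F\cdot r = r^qF$, one computes
\[
F\cdot\beta \;=\; \sum \beta_i^q F^{i+1}, \qquad aF\cdot\gamma \;=\; \sum a\gamma_i^q F^{i+1},
\]
so a right Ore relation $F\beta=aF\gamma$ forces $\beta_i^q = a\gamma_i^q$ for every $i$; at any index with $\gamma_i\neq 0$ this gives $a=(\beta_i/\gamma_i)^q\in k^q$, contradicting the choice of $a$. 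Therefore $F\cdot k[F]\cap aF\cdot k[F]=0$, and the right Ore condition fails.

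The main subtlety is the genuine asymmetry between the two Ore conditions for this ring: the left Euclidean algorithm sketched above actually works for any injective $\sigma$, so $k[F]$ is always a left principal ideal domain and hence always left Ore, irrespective of perfectness. I therefore read the lemma as demanding both Ore conditions simultaneously (equivalently, that $k[F]$ admit an unambiguous two-sided skew field of fractions, so that $D$ above is canonically defined); the obstruction produced on the right side then supplies exactly the nontrivial half of the equivalence, and the statement for the left side follows formally from this reading together with the automatic left Ore property.
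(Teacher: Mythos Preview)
Your argument is correct and considerably more explicit than the paper's own proof, which simply cites Cohn's \emph{Skew Fields} (Prop.~2.1.6) after identifying $k[F]$ with the twisted polynomial ring $k[F;\sigma,0]$ for $\sigma(x)=x^q$. You have unpacked exactly what that citation contains: the Euclidean algorithm in the perfect case, and the explicit obstruction $F\cdot k[F]\cap \lambda F\cdot k[F]=0$ for $\lambda\notin k^q$ in the imperfect case.

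Your observation about the left/right asymmetry is sharp and, in fact, exposes a small imprecision in the lemma as stated. Since $k[F]$ is always a left PID (your left Euclidean step only needs injectivity of $\sigma$), it is always left Noetherian and hence always a left Ore domain---this is even consistent with Yoshino's theorem quoted earlier in the paper, which gives left Noetherianity of $R[F]$ for any field $R$. So the ``only if'' direction fails for the left Ore condition taken alone. Your reading---that the lemma is really asserting the two-sided Ore property, equivalently the existence of a well-defined two-sided quotient skew field $D$---is the correct one, and it is the only reading under which the lemma is both true and matches its intended use in the paper (namely, to explain when Quillen's $D=\operatorname{Quot} k[F]$ makes sense). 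The paper's citation to Cohn presumably concerns the right Ore condition, which is the nontrivial half.
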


\begin{proof}
\cite[Prop. 2.1.6]{MR1349108}. Use that $k[F]$ is an example of a twisted
polynomial ring for the endomorphism $\sigma$ being the Frobenius $x\mapsto
x^{q}$, and the trivial derivation $\delta:=0$.
\end{proof}

\begin{remark}
There is a relation between the lack of a field of fractions and the lack of
Noetherianity. Every right Noetherian domain is a right\ Ore domain, \cite[Ch.
2, (1.15) Theorem]{MR1811901} or \cite[Ch. 4, (10.23)\ Corollary]{MR1653294}.
This is a part of Yoshino's result, Thm.
\ref{thm_Yoshino_CharacterizationNoetherian}.
\end{remark}

In particular, in the context of Quillen's paper, where $k$ is algebraically
closed and thus trivially perfect, this quotient skew field exists and he can
work with it. However, we run into a serious problem when trying to understand
the broader picture of his computation since it becomes unclear how the
definition of $D$ is to be generalized. For example, it is hopeless to try to
define such a $D$ for a curve $X/\mathbf{F}_{q}$ if already for its function
field $\mathbf{F}_{q}(X)$ the multiplicative set $\mathbf{F}_{q}(X)-\{0\}$ is
not a left or right denominator set. However, Quillen goes on to characterize
the finitely generated right $D$-modules as%
\begin{equation}
\mathsf{Mod}_{fg}(D)=\mathsf{Mod}_{fg}(A)/\mathsf{B}\text{,}\label{lBM4}%
\end{equation}
where $\mathsf{B}$ is the Serre subcategory of those $A$-modules which are
finite-dimensional as $k$-vector spaces. So, instead of worrying about
defining $D$, we can generalize its category of modules. The analogue of
$\mathsf{Mod}_{fg}(A)$ will be $\mathsf{Coh}_{\mathcal{O}_{X}[F]}(X)$ and
$\mathsf{B}$ have to be those modules which are finitely generated over
$\mathcal{O}_{X}$, but this is precisely the subcategory of Cartier modules in
the sense of Blickle and\ B\"{o}ckle \cite{MR2863904}:

\begin{definition}
\label{Def_QuillenCatQD}If $X/\mathbf{F}_{q}$ is an $F$-finite Noetherian
separated scheme, define an abelian category\footnote{I chose $\mathsf{QD}(X)$
because $\mathsf{D}(X)$ looks too much as if it were to suggest a derived
category.}%
\begin{equation}
\mathsf{QD}(X):=\mathsf{Coh}_{\mathcal{O}_{X}[F]}(X)/\mathsf{CohCart}%
(X)\text{.}\label{lbib1}%
\end{equation}

\end{definition}

This definition is justified by the following:

\begin{lemma}
If $X/\mathbf{F}_{q}$ is an $F$-finite Noetherian separated scheme, then
$\mathsf{CohCart}(X)$ is a Serre subcategory of $\mathsf{Coh}_{\mathcal{O}%
_{X}[F]}(X)$.
\end{lemma}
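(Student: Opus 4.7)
The plan is to verify the three defining conditions of a Serre subcategory (closure under subobjects, quotients, and extensions) inside the abelian category $\mathsf{Coh}_{\mathcal{O}_X[F]}(X)$. Before that, I must first confirm that $\mathsf{CohCart}(X)$ genuinely sits inside $\mathsf{Coh}_{\mathcal{O}_X[F]}(X)$ as a full subcategory, since the two categories impose different finiteness conditions.

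The inclusion step is where the only real content lies. A coherent Cartier module $(\mathcal{F},C)$ is by definition a coherent (hence quasi-coherent) $\mathcal{O}_X$-module sheaf equipped with a Cartier operator, which is equivalent data to a right $\mathcal{O}_X[F]$-module structure extending the $\mathcal{O}_X$-action. What is not immediate is that such an $\mathcal{F}$ is locally finitely presented as a right $\mathcal{O}_X[F]$-module, because the ring $\mathcal{O}_X[F]$ is much larger than $\mathcal{O}_X$. This is precisely Lemma~\ref{lem_FinGenOverRMeansFinPresOverRF}: affine-locally, with $R=\mathcal{O}_X(U)$ (Noetherian and $F$-finite, as $X$ is $F$-finite), a right $R[F]$-module finitely generated over $R$ is automatically finitely presented over $R[F]$. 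Fullness of the inclusion is automatic, since morphisms on both sides are right $\mathcal{O}_X[F]$-linear maps.

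For the Serre properties, consider a short exact sequence $0\to\mathcal{A}\to\mathcal{B}\to\mathcal{C}\to 0$ in $\mathsf{Coh}_{\mathcal{O}_X[F]}(X)$; the underlying sequence of quasi-coherent $\mathcal{O}_X$-modules is still short exact. If $\mathcal{B}$ lies in $\mathsf{CohCart}(X)$, i.e.\ is coherent over $\mathcal{O}_X$, then on the Noetherian scheme $X$ both $\mathcal{A}$ and $\mathcal{C}$ inherit coherence as $\mathcal{O}_X$-modules, giving closure under subobjects and quotients. For closure under extensions, if $\mathcal{A}$ and $\mathcal{C}$ are coherent over $\mathcal{O}_X$, then $\mathcal{B}$ is quasi-coherent (being an extension of quasi-coherents) and affine-locally finitely generated (combine generators of $\mathcal{A}$ with any lifts of generators of $\mathcal{C}$), hence coherent on the Noetherian scheme $X$. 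Therefore $\mathcal{B}\in\mathsf{CohCart}(X)$.

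The main — and essentially only — obstacle is the initial embedding: one must know that coherent Cartier modules satisfy the \emph{finite presentation} condition over the noncommutative ring $\mathcal{O}_X[F]$. Once Lemma~\ref{lem_FinGenOverRMeansFinPresOverRF} supplies this, the remaining Serre-subcategory checks reduce to the standard formal two-out-of-three stability of coherence for quasi-coherent sheaves on a Noetherian scheme.
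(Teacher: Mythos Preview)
Your proof is correct and follows essentially the same approach as the paper: you use Lemma~\ref{lem_FinGenOverRMeansFinPresOverRF} to establish the inclusion $\mathsf{CohCart}(X)\subseteq\mathsf{Coh}_{\mathcal{O}_X[F]}(X)$, and then invoke Noetherianity of $X$ to see that $\mathcal{O}_X$-coherence is stable in short exact sequences. The paper is slightly more terse in the second step, simply remarking that the Noetherian hypothesis makes ``finitely generated over $\mathcal{O}_X$'' a Serre condition, whereas you spell out the three closure properties explicitly.
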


\begin{proof}
First of all, it is a full subcategory: Cartier modules carry, by definition,
a right $\mathcal{O}_{X}[F]$-module structure. So it only remains to show that
being coherent as an $\mathcal{O}_{X}$-module sheaf implies being finitely
presented as a right $\mathcal{O}_{X}[F]$-module sheaf. This can be checked
affine locally, and then reduces to Lemma
\ref{lem_FinGenOverRMeansFinPresOverRF}. Secondly, as $X$ is Noetherian, the
condition to be finitely generated over $\mathcal{O}_{X}$ renders this full
subcategory a Serre subcategory.
\end{proof}

\begin{example}
\label{example_Qui1}Let us return to Quillen's original example. Clearly
$X:=\operatorname*{Spec}\mathbf{F}_{q}^{\operatorname*{sep}}$ is an $F$-finite
Noetherian scheme, its sheaves identify with certain types of $\mathbf{F}%
_{q}^{\operatorname*{sep}}$-modules, notably $\mathsf{Coh}_{\mathcal{O}%
_{X}[F]}(X)\cong\mathsf{Mod}_{fg}(A)$ and $\mathsf{CohCart}(X)\cong\mathsf{B}$
in Quillen's notation. In particular,%
\[
\mathsf{QD}(X)\cong\mathsf{Mod}_{fg}(D)\text{.}%
\]

\end{example}

Quillen then proceeds to compute the $K$-theory of the skew field $D$ using
his localization sequence, based on modules over $D$ being a quotient abelian
category, as in Equation \ref{lBM4}. We can do the same for $\mathsf{QD}(X)$,
and for the same reason.

\begin{theorem}
\label{MainThmQD}Suppose $X/\mathbf{F}_{q}$ is a regular and $F$-finite
Noetherian separated scheme. Then there is a long exact sequence%
\[
\cdots\longrightarrow K_{m}(\mathsf{CohCart}(X))\overset{(\ast)}%
{\longrightarrow}K_{m}(X)\overset{\overline{\Psi}}{\longrightarrow}%
K_{m}(\mathsf{QD}(X))\longrightarrow\cdots\text{,}%
\]
where the map $(\ast)$ is $(1-F_{\ast})\circ U$.
\end{theorem}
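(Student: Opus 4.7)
The plan is to apply Quillen's localization theorem to the Serre subcategory inclusion $\mathsf{CohCart}(X)\hookrightarrow\mathsf{Coh}_{\mathcal{O}_X[F]}(X)$ whose quotient is $\mathsf{QD}(X)$ by Definition \ref{Def_QuillenCatQD}, and then translate the middle term via the Frobenius line invariance of Theorem \ref{thm_TwistedA1InvarianceOfKTheory}. Once the right identifications are in place, the identification of the connecting map will be read off from Proposition \ref{prop_CompareEquivalenceAndCategoryInclusion}.

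First I would invoke Quillen's localization theorem (\cite[\S 5, Theorem 5]{MR0338129}) applied to the Serre inclusion (which was verified in the lemma just preceding the theorem). This yields the long exact sequence
\[
\cdots\longrightarrow K_m(\mathsf{CohCart}(X))\xrightarrow{\iota_*} K_m(\mathsf{Coh}_{\mathcal{O}_X[F]}(X))\longrightarrow K_m(\mathsf{QD}(X))\longrightarrow\cdots,
\]
where $\iota$ denotes the inclusion of line \ref{lBM3}. Next, since $X$ is regular, Noetherian and $F$-finite, Theorem \ref{thm_TwistedA1InvarianceOfKTheory} supplies an equivalence $\Psi:K(\mathsf{Coh}(X))\overset{\sim}{\longrightarrow}K(\mathsf{Coh}_{\mathcal{O}_X[F]}(X))$. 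Since $X$ is regular, $K(X)\cong K(\mathsf{Coh}(X))$, so we may replace $K_m(\mathsf{Coh}_{\mathcal{O}_X[F]}(X))$ with $K_m(X)$ using $\Psi^{-1}$, and we define $\overline{\Psi}$ as the corresponding composition to the quotient $K_m(\mathsf{QD}(X))$.

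The only point needing care is the identification of the transferred map $\Psi^{-1}\circ\iota_*:K_m(\mathsf{CohCart}(X))\to K_m(X)$. But this is precisely the content of Proposition \ref{prop_CompareEquivalenceAndCategoryInclusion}, which already establishes that $\Psi^{-1}\circ\iota_*=(1-F_*)\circ U$ using the Frobenius-twisted Koszul sequence of Lemma \ref{lemma_FrobeniusTwistedKoszulComplex} together with the Additivity Theorem. Substituting this identification into the localization sequence yields exactly the statement of Theorem \ref{MainThmQD}.

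The main technical obstacle is essentially already absorbed into the preparatory results: verifying that $\mathsf{CohCart}(X)$ really is a Serre subcategory of $\mathsf{Coh}_{\mathcal{O}_X[F]}(X)$ relies on the nontrivial fact, from Lemma \ref{lem_FinGenOverRMeansFinPresOverRF}, that $\mathcal{O}_X$-coherence upgrades to finite presentation over $\mathcal{O}_X[F]$; and the identification of the connecting map depends on exhibiting the twisted Koszul resolution as a short exact sequence in $\mathsf{Coh}_{\mathcal{O}_X[F]}(X)$, which again uses Lemma \ref{lem_FinGenOverRMeansFinPresOverRF}. With those inputs in hand, the proof of Theorem \ref{MainThmQD} itself is essentially a one-line assembly from Quillen's localization sequence, Theorem \ref{thm_TwistedA1InvarianceOfKTheory}, and Proposition \ref{prop_CompareEquivalenceAndCategoryInclusion}.
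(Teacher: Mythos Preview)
Your proposal is correct and follows essentially the same route as the paper: apply Quillen's localization theorem to the Serre inclusion $\mathsf{CohCart}(X)\hookrightarrow\mathsf{Coh}_{\mathcal{O}_X[F]}(X)$, replace the middle term via the equivalence $\Psi$ of Theorem~\ref{thm_TwistedA1InvarianceOfKTheory}, and then read off the identification of the first map from Proposition~\ref{prop_CompareEquivalenceAndCategoryInclusion}. Your additional remark that regularity of $X$ is what allows one to write $K_m(X)$ in place of $K_m(\mathsf{Coh}(X))$ makes explicit a step the paper leaves implicit.
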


\begin{proof}
By Quillen's localization theorem, we have the homotopy fiber sequence%
\[
K(\mathsf{CohCart}(X))\overset{\iota}{\longrightarrow}K(\mathsf{Coh}%
_{\mathcal{O}_{X}[F]}(X))\overset{\overline{(-)}}{\longrightarrow
}K(\mathsf{QD}(X))\longrightarrow+1\text{,}%
\]
straight from Definition \ref{Def_QuillenCatQD}, where $\iota$ is the
inclusion of the Serre subcategory and $\overline{(-)}$ denotes the exact
functor to the quotient abelian category. We modify this sequence in two ways:
(1)\ By Theorem \ref{thm_TwistedA1InvarianceOfKTheory} we have the Frobenius
analogue of the $\mathbf{A}^{1}$-invariance, $\Psi:K(\mathsf{Coh}%
(X))\overset{\sim}{\rightarrow}K(\mathsf{Coh}_{\mathcal{O}_{X}[F]}(X))$. We
arrive at%
\begin{equation}
K(\mathsf{CohCart}(X))\overset{\Psi^{-1}\circ\iota}{\longrightarrow
}K(\mathsf{Coh}(X))\overset{\overline{(-)}\circ\Psi}{\longrightarrow
}K(\mathsf{QD}(X))\longrightarrow+1\text{.}\label{lBM5}%
\end{equation}
By Prop. \ref{prop_CompareEquivalenceAndCategoryInclusion} the first arrow
agrees with $(1-F_{\ast})\circ U$.
\end{proof}

\begin{remark}
The fundamental r\^{o}le of the homotopy fiber of $1-F_{\ast}$ has its
counterpart in \cite{MR929766}, where $1-F^{\ast}$ has a similar function.
\end{remark}

\subsection{Implications of Parshin's conjecture}

\begin{conjecture}
[Parshin]Suppose $X/\mathbf{F}_{q}$ is a smooth projective scheme. Then
$K_{m}(X)\otimes\mathbf{Q}=0$ for $m\geq1$.
\end{conjecture}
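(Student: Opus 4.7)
The final stated item is Parshin's conjecture, which is one of the central open problems in the arithmetic $K$-theory of varieties over finite fields. The author states it explicitly as a conjecture and will presumably use it only as a hypothesis in the subsequent table of rational $K$-groups; my ``proof proposal'' is therefore necessarily programmatic rather than a genuine sketch.

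The natural strategy is to translate the assertion to motivic cohomology via the Geisser--Levine theorem, which for $X$ smooth over a field gives a canonical rational isomorphism
\[
K_m(X)\otimes\mathbf{Q} \;\cong\; \bigoplus_{n\geq 0} H^{2n-m}_{\mathcal{M}}(X,\mathbf{Q}(n)).
\]
Parshin's conjecture then becomes equivalent to the vanishing of $H^i_{\mathcal{M}}(X,\mathbf{Q}(n))$ away from the diagonal $i = 2n$, which subdivides into Beilinson--Soul\'e vanishing below the diagonal and the analogous vanishing strictly above it. For $X$ smooth projective over $\mathbf{F}_q$ one would then compare motivic with $\ell$-adic cohomology through the cycle class map: by Deligne's Weil~II, $H^i_{\ell}(X,\mathbf{Q}_\ell(n))$ is pure of weight $i-2n$, so if one knew that the cycle class map were rationally injective, the Frobenius weight constraint would force the motivic group to vanish outside $i=2n$; the diagonal contribution then feeds only into $K_0$, leaving $K_m(X)\otimes\mathbf{Q} = 0$ for $m\geq 1$.

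The genuine obstacle is precisely the injectivity of this regulator, which is one of Beilinson's motivic conjectures and is wide open in the generality required. The conjecture is a theorem of Harder for curves (via reduction theory for $\mathrm{GL}_n$ over function fields) and is known in a handful of further special cases, but in dimension at least two over an arbitrary finite field no technique is currently in sight. For this reason I would, like the author, record Parshin's assertion as a hypothesis and proceed to draw conclusions from it rather than attempt a proof; the remainder of the paper's computation of $K_m(\mathsf{QD}(X))_{\mathbf{Q}}$ then proceeds by inserting the conjectural vanishing into the long exact sequence of Theorem~\ref{MainThmQD}, with the map $(1-F_{\ast})\circ U$ controlling the remaining contributions.
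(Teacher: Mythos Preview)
You have correctly identified the situation: the paper states Parshin's conjecture as a \emph{conjecture}, not as a theorem, and offers no proof. Immediately after the statement the author remarks that it is only known for curves and for varieties whose $K$-theory is essentially computed anyway (e.g.\ cellular varieties), and then proceeds to use it purely as a hypothesis in the subsequent theorem computing $K_m(\mathsf{QD}(X))_{\mathbf{Q}}$. So there is nothing to compare your proposal against, and your decision to treat the assertion as an assumption rather than attempt a proof matches the paper exactly.

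Your programmatic discussion of how one might approach the conjecture (reduction to motivic cohomology, weight arguments via Deligne, the obstruction being injectivity of the regulator) is reasonable background but goes well beyond anything the paper says; the paper simply records the conjecture and moves on. One minor point: you invoke ``the Geisser--Levine theorem'' for the rational splitting $K_m(X)_{\mathbf{Q}}\cong\bigoplus_n H^{2n-m}_{\mathcal{M}}(X,\mathbf{Q}(n))$, but that rational decomposition is older (Bloch's higher Chow groups plus Adams eigenspace decomposition, or the Friedlander--Suslin/Levine comparison); Geisser--Levine is usually cited for the mod-$p$ comparison in characteristic $p$. This does not affect your conclusion.
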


To the best of my knowledge, the first mention of this in print is \cite[Conj.
2.4.2.3]{MR760999}. At present, this conjecture is only known for curves, or
when the $K$-theory is basically entirely known anyway, e.g. for cellular
varieties. Next, let us recall Quillen's computation, Equation \ref{lca3}, but
with rational coefficients. This means, we are talking about%
\[
K_{m}(D)_{\mathbf{Q}}=\left\{
\begin{array}
[c]{ll}%
\mathbf{Q} & \text{if }m=0\\
\mathbf{Q}\oplus\mathbf{Q} & \text{if }m=1\text{,}\\
0 & \text{if }m\geq2\text{.}%
\end{array}
\right.
\]
If we assume the validity of Parshin's Conjecture, we can see the
characteristic features of this computation repeat, in a more complicated
fashion, in the general case of varieties:

\begin{theorem}
Suppose $X/\mathbf{F}_{q}$ is a smooth, projective, and geometrically integral
scheme. Then%
\[
K_{m}(\mathsf{QD}(X))_{\mathbf{Q}}=\left\{
\begin{array}
[c]{ll}%
\mathbf{Q} & \text{if }m=0\\
\mathbf{Q}\oplus K_{0}(\mathsf{\acute{E}t}_{c}(X,\mathbf{F}_{q}))_{\mathbf{Q}}
& \text{if }m=1\text{, }^{(\ast)}\\
0 & \text{if }m\geq2\text{, }^{(\ast)}%
\end{array}
\right.
\]
where the entries marked with an asterisk $(\ast)$ are only known if $X$
satisfies Parshin's Conjecture. Unconditionally: If $x:\operatorname*{Spec}%
\kappa(x)\hookrightarrow X$ is any closed point, $[(x_{\ast}\mathcal{O}%
_{\kappa(x)})\otimes_{\mathcal{O}_{X}}\mathcal{O}_{X}[F]]$ is a basis of
$K_{0}(\mathsf{QD}(X))_{\mathbf{Q}}$.
\end{theorem}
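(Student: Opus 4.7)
The plan is to extract the computation from the long exact sequence of Theorem~\ref{MainThmQD}, using the splitting sequence of Proposition~\ref{Prop_SplitingOfKTheoryOfCohCart} together with the computation of $K_*(\mathsf{CartCrys}(X))_{\mathbf{Q}}$ from Proposition~\ref{prop_K_of_CartCrys_InPrimeToPTorsion}, and then to reduce everything to analysing $1-F_*$ on $K_0(X)_{\mathbf{Q}}$. The only nonformal input is a filtration-and-eigenvalue analysis of $F_*$ together with Kato--Saito's theorem on zero-cycles over finite fields.

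First I would rationalise the sequence of Proposition~\ref{Prop_SplitingOfKTheoryOfCohCart}. Since $K_m(\mathsf{CartCrys}(X))_{\mathbf{Q}}=0$ for $m\geq 1$ by Proposition~\ref{prop_K_of_CartCrys_InPrimeToPTorsion}, the inclusion $s\colon K_m(X)_{\mathbf{Q}}\to K_m(\mathsf{CohCart}(X))_{\mathbf{Q}}$ (sending $\mathcal{F}$ to $(\mathcal{F},0)$) is an isomorphism for $m\geq 1$; for $m=0$ one obtains a short exact sequence that is canonically split by the forgetful functor $U$ (which satisfies $U\circ s=\mathrm{id}$), yielding
\[
K_0(\mathsf{CohCart}(X))_{\mathbf{Q}}\;\cong\;K_0(X)_{\mathbf{Q}}\oplus K_0(\mathsf{CartCrys}(X))_{\mathbf{Q}},
\]
under which $U_*$ is projection onto the first factor. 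Consequently $(1-F_*)\circ U$ is $(1-F_*)$ on the first factor and zero on the second.

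Next I would compute $\ker$ and $\mathrm{coker}$ of $1-F_*$ on $K_0(X)_{\mathbf{Q}}$ via the codimension-of-support filtration $F^\bullet K_0(X)_{\mathbf{Q}}$. Because $F$ is the identity on the underlying topological space, $F_*$ preserves this filtration; for smooth $X$ the associated graded identifies rationally with the Chow groups, $\mathrm{gr}^i K_0(X)_{\mathbf{Q}}\cong \mathrm{CH}^i(X)_{\mathbf{Q}}$, and on $\mathrm{gr}^i$ the operator $F_*$ acts as multiplication by $q^{d-i}$, where $d=\dim X$ (this follows from $F^*=q^i$ on $\mathrm{CH}^i_{\mathbf{Q}}$ together with the projection formula $F_*F^*=\deg F=q^d$). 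Hence $1-F_*$ is invertible on $\mathrm{gr}^i$ for $i<d$ and zero on $\mathrm{gr}^d=\mathrm{CH}_0(X)_{\mathbf{Q}}$. A short snake-lemma induction up the (finite) filtration then yields
\[
\ker(1-F_*)\;=\;\mathrm{coker}(1-F_*)\;=\;\mathrm{CH}_0(X)_{\mathbf{Q}}.
\]
By Kato--Saito, for $X$ smooth projective over $\mathbf{F}_q$ the group $\mathrm{CH}_0(X)$ is finitely generated with finite-kernel degree map; combining this with geometric integrality (which, via Lang--Weil, forces the degree map to be surjective) gives $\mathrm{CH}_0(X)_{\mathbf{Q}}=\mathbf{Q}$, generated by the class of any closed point.

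Finally I would assemble the long exact sequence of Theorem~\ref{MainThmQD}. The terminal segment $K_0(\mathsf{CohCart}(X))_{\mathbf{Q}}\xrightarrow{(1-F_*)U}K_0(X)_{\mathbf{Q}}\to K_0(\mathsf{QD}(X))_{\mathbf{Q}}\to 0$ combined with the computation above gives $K_0(\mathsf{QD}(X))_{\mathbf{Q}}=\mathbf{Q}$ \emph{unconditionally}, with generator the image of $[x_*\mathcal{O}_{\kappa(x)}]$ under $\Psi$, i.e.\ $[(x_*\mathcal{O}_{\kappa(x)})\otimes_{\mathcal{O}_X}\mathcal{O}_X[F]]$. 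Under Parshin's conjecture, $K_m(X)_{\mathbf{Q}}=0$ for $m\geq 1$, and hence also $K_m(\mathsf{CohCart}(X))_{\mathbf{Q}}=0$ for $m\geq 1$; the long exact sequence then forces $K_m(\mathsf{QD}(X))_{\mathbf{Q}}=0$ for $m\geq 2$, while for $m=1$ it identifies
\[
K_1(\mathsf{QD}(X))_{\mathbf{Q}}\;=\;\ker\bigl((1-F_*)U\bigr)\;=\;\mathbf{Q}\oplus K_0(\mathsf{CartCrys}(X))_{\mathbf{Q}}\;=\;\mathbf{Q}\oplus K_0(\mathsf{\acute{E}t}_{c}(X,\mathbf{F}_{q}))_{\mathbf{Q}},
\]
the last equality being Theorem~\ref{thm_Summary_KThyOfCartCrystals}. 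The main obstacle is the filtration/eigenvalue analysis of $1-F_*$ on $K_0(X)_{\mathbf{Q}}$ together with the invocation of Kato--Saito: without the latter one only concludes $K_0(\mathsf{QD}(X))_{\mathbf{Q}}=\mathrm{CH}_0(X)_{\mathbf{Q}}$, and smoothness together with projectivity is essential both for the clean description of $F_*$ on Chow and for the Kato--Saito input.
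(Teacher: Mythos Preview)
Your proposal is correct and follows essentially the same architecture as the paper: the long exact sequence of Theorem~\ref{MainThmQD}, the rationalised sequence of Proposition~\ref{Prop_SplitingOfKTheoryOfCohCart}, the torsion computation of Proposition~\ref{prop_K_of_CartCrys_InPrimeToPTorsion}, an eigenvalue analysis of $1-F_{\ast}$ on $K_{0}(X)_{\mathbf{Q}}$, and finally Kato--Saito to reduce $\mathrm{CH}_{0}(X)_{\mathbf{Q}}$ to $\mathbf{Q}$.

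There are two organisational differences worth noting. First, for $m=1$ you exploit the retraction $U\circ s=\mathrm{id}$ to split $K_{0}(\mathsf{CohCart}(X))_{\mathbf{Q}}\cong K_{0}(X)_{\mathbf{Q}}\oplus K_{0}(\mathsf{CartCrys}(X))_{\mathbf{Q}}$ and read off $\ker((1-F_{\ast})U)$ directly; the paper instead places the short exact sequence of Proposition~\ref{Prop_SplitingOfKTheoryOfCohCart} as the top row of a commutative diagram over $K_{0}(X)_{\mathbf{Q}}\xrightarrow{\sim}K_{0}(X)_{\mathbf{Q}}$ and runs the snake lemma. Your route is slightly cleaner and makes the direct-sum description of $K_{1}(\mathsf{QD}(X))_{\mathbf{Q}}$ manifest, while the paper's diagram chase avoids having to verify that the second summand really is $\ker U$. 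Second, for the analysis of $1-F_{\ast}$ you work with the coniveau filtration and a snake-lemma induction through the graded pieces, whereas the paper invokes the Grothendieck--Riemann--Roch isomorphism $K_{0}(X)_{\mathbf{Q}}\cong\bigoplus_{i}\mathrm{CH}_{i}(X)_{\mathbf{Q}}$ to diagonalise $F_{\ast}$ outright. These are equivalent rationally (the Chern character splits the filtration), but the paper's formulation gives the conclusion in one line. Finally, for $\mathrm{CH}_{0}(X)_{\mathbf{Q}}\cong\mathbf{Q}$ the paper cites Kato--Saito together with Katz--Lang finiteness of $\pi_{1}^{\mathrm{geom}}(X)_{\mathrm{ab}}$, whereas you cite Kato--Saito with a Lang--Weil argument for surjectivity of the degree map; either combination works.
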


\begin{proof}
We use Theorem \ref{MainThmQD}. \textit{(1)} For $m=0$, we obtain the
presentation%
\[
K_{0}(\mathsf{CohCart}(X))_{\mathbf{Q}}\rightarrow K_{0}(\mathsf{Coh}%
(X))_{\mathbf{Q}}\rightarrow K_{0}(\mathsf{QD}(X))_{\mathbf{Q}}\rightarrow
0\text{,}%
\]
and we know that the first arrow takes the value $(1-F_{\ast})U[\mathcal{F}]$
for Cartier modules $\mathcal{F}$. Since $U$ maps $\mathcal{F}$ to the
underlying coherent sheaf, and all coherent sheaves can be equipped with the
trivial Cartier module structure of $F$ acting by zero, the image is just
$(1-F_{\ast})$ applied to arbitrary coherent shaves. By this and the
regularity of $X$, we can also work with the right exact sequence%
\[
K_{0}(X)_{\mathbf{Q}}\overset{1-F_{\ast}}{\longrightarrow}K_{0}(X)_{\mathbf{Q}%
}\rightarrow K_{0}(\mathsf{QD}(X))_{\mathbf{Q}}\rightarrow0\text{.}%
\]
Using Grothendieck-Riemann-Roch, $K_{0}(X)\otimes\mathbf{Q}\cong\coprod
_{i\geq0}\operatorname*{CH}_{i}(X)_{\mathbf{Q}}$, and the Frobenius
pushforward acts as $q^{i}$ on $\operatorname*{CH}_{i}(X)_{\mathbf{Q}}$ (by
direct computation, or by \cite[\S 1.5.3, Prop. 2]{MR751733}). For $i\geq1$,
the element $1-q^{i}$ is invertible in the rationals, so $1-F_{\ast}$ acts as
an isomorphism on these summands, while it is the zero map for
$\operatorname*{CH}_{0}(X)_{\mathbf{Q}}$. Thus, we get an isomorphism%
\begin{equation}
\operatorname*{CH}\nolimits_{0}(X)_{\mathbf{Q}}\overset{\cong}{\longrightarrow
}K_{0}(\mathsf{QD}(X))\otimes\mathbf{Q}\text{.}\label{liaps1}%
\end{equation}
The zero cycles sit in an exact sequence, $0\rightarrow A_{0}(X)\rightarrow
\operatorname*{CH}\nolimits_{0}(X)\rightarrow\mathbf{Z}\rightarrow0$, with the
degree map. By\ Kato--Saito unramified class field theory, specifically
\cite[Theorem 1]{MR717824}, the reciprocity map sends this sequence to
$0\rightarrow\pi_{1}^{\operatorname*{geom}}(X,\ast)_{ab}\rightarrow\pi
_{1}^{\acute{e}t}(X,\ast)_{ab}\rightarrow\operatorname*{Gal}(\mathbf{F}%
_{q}^{\operatorname*{sep}}/\mathbf{F}_{q})\rightarrow0$ such that it induces
an isomorphism on the left term, and the profinite completion $\mathbf{Z}%
\hookrightarrow\widehat{\mathbf{Z}}$ on the right. Moreover, by
Katz--Lang\ Finiteness \cite[Theorem 2]{MR659153}, applied to the structural
morphism $X\rightarrow\operatorname*{Spec}\mathbf{F}_{q}$, the geometric part
$\pi_{1}^{\operatorname*{geom}}(X,\ast)_{ab}$ is finite (the field
$\mathbf{F}_{q}$ is clearly accessible in the sense loc. cit. since it is even
finite over its prime field). Hence, line \ref{liaps1} implies
$\operatorname*{CH}\nolimits_{0}(X)_{\mathbf{Q}}\cong\mathbf{Q}$ and that
$K_{0}(\mathsf{QD}(X))\otimes\mathbf{Q}$ is one-dimensional. Finally, any
closed point generates $\operatorname*{CH}\nolimits_{0}(X)_{\mathbf{Q}}$ and
unwinding the maps, this gives the explicit generator $[(x_{\ast}%
\mathcal{O}_{\mathbf{F}_{q}})\otimes_{\mathcal{O}_{X}}\mathcal{O}_{X}%
[F]]$.\newline\textit{(2)} For $m=1$, we get%
\begin{equation}
\cdots\rightarrow K_{1}(X)_{\mathbf{Q}}\rightarrow K_{1}(\mathsf{QD}%
(X))_{\mathbf{Q}}\rightarrow K_{0}(\mathsf{CohCart}(X))_{\mathbf{Q}}%
\overset{(\ast)}{\rightarrow}K_{0}(X)_{\mathbf{Q}}\text{.}\label{lvv1}%
\end{equation}
By\ Parshin's Conjecture, $K_{1}(X)_{\mathbf{Q}}=0$, so it follows that
$K_{1}(\mathsf{QD}(X))_{\mathbf{Q}}$ is the kernel of the morphism $(\ast)$.
We compute this kernel as follows: By Prop.
\ref{Prop_SplitingOfKTheoryOfCohCart} we have the exact sequence%
\[
K_{1}(\mathsf{CartCrys}(X))\rightarrow K_{0}(\mathsf{Coh}(X))\rightarrow
K_{0}(\mathsf{CohCart}(X))\rightarrow K_{0}(\mathsf{CartCrys}(X))\rightarrow0
\]
and by Prop. \ref{prop_K_of_CartCrys_InPrimeToPTorsion} the group
$K_{1}(\mathsf{CartCrys}(X))$ is pure torsion. We obtain the commutative
diagram%
\[%
{
\xymatrix@C=0.1in
{
0 \ar[r] & K_{0}(\mathsf{Coh}(X))_{\mathbf{Q}} \ar[r] \ar[d]_{1-F_{\ast}}
& K_{0}(\mathsf{CohCart}(X))_{\mathbf{Q}} \ar[r] \ar[d]_{(1-F_{\ast})U}
& K_{0}(\mathsf{CartCrys}(X))_{\mathbf{Q}} \ar[r] \ar[d] & 0 \\
0 \ar[r]  & K_{0}(\mathsf{Coh}(X))_{\mathbf{Q}} \ar[r]^{\sim} & K_{0}%
(\mathsf{Coh}(X))_{\mathbf{Q}} \ar[r] &  0  \ar[r] & 0
}
}%
\]
and since $1-F_{\ast}$ acts as $1-q^{i}$ on the rationalized
$\operatorname*{CH}_{i}$ summand, the left and middle downward arrow have
$\operatorname*{CH}\nolimits_{0}(X)_{\mathbf{Q}}$ both as kernel and cokernel.
The snake lemma yields a long exact sequence%
\[
0\rightarrow\operatorname*{CH}\nolimits_{0}(X)_{\mathbf{Q}}\rightarrow
K_{1}(\mathsf{QD}(X))_{\mathbf{Q}}\rightarrow K_{0}(\mathsf{CartCrys}%
(X))_{\mathbf{Q}}\rightarrow\operatorname*{CH}\nolimits_{0}(X)_{\mathbf{Q}%
}\overset{\sim}{\rightarrow}\operatorname*{CH}\nolimits_{0}(X)_{\mathbf{Q}%
}\rightarrow0
\]
and the last arrow is an isomorphism since both downward arrows have the same
image, so the snake map must be the zero map. Finally, use the
Riemann--Hilbert correspondence to identify the $K$-theory of the Cartier
crystals with the one of $\mathsf{\acute{E}t}_{c}$ (Theorem
\ref{thm_Summary_KThyOfCartCrystals}).\newline\textit{(3)}\ For $m\geq2$,
Theorem \ref{MainThmQD} tells us that%
\begin{equation}
\cdots\rightarrow K_{m}(X)_{\mathbf{Q}}\rightarrow K_{m}(\mathsf{QD}%
(X))_{\mathbf{Q}}\rightarrow K_{m-1}(\mathsf{CohCart}(X))_{\mathbf{Q}%
}\rightarrow\cdots\label{lca10}%
\end{equation}
is exact. By Parshin's Conjecture, the rationalized $K$-groups $K_{m}%
(X)_{\mathbf{Q}}$ vanish. Moreover, in%
\[
K_{m-1}(\mathsf{Coh}(X))_{\mathbf{Q}}\rightarrow K_{m-1}(\mathsf{CohCart}%
(X))_{\mathbf{Q}}\rightarrow K_{m-1}(\mathsf{CartCrys}(X))_{\mathbf{Q}}%
\]
of Prop. \ref{Prop_SplitingOfKTheoryOfCohCart} the left term vanishes by
Parshin's Conjecture, and the right one by Prop.
\ref{prop_K_of_CartCrys_InPrimeToPTorsion}. Thus, $K_{m-1}(\mathsf{CohCart}%
(X))_{\mathbf{Q}}=0$ and thus $K_{m}(\mathsf{QD}(X))_{\mathbf{Q}}=0$ by the
exactness of line \ref{lca10}. This finishes the proof.
\end{proof}

\begin{acknowledgement}
I heartily thank M. Blickle, L.\ Kindler and L. Taelman for very interesting
conversations. I would also like to thank G. B\"{o}ckle, M. Emerton,
D.\ Grayson, L.\ Hesselholt and M. Wendt for their comments and answering my
questions. I would also like to thank the participants of the 2016 Oberwolfach
workshop on Algebraic $K$-theory for the inspiring atmosphere.
\end{acknowledgement}

\bibliographystyle{amsalpha}
\bibliography{ollinewbib}

Date:
{\today}

\end{document}